\newcommand{\nn}{\nonumber}
\newcommand{\eps}{\varepsilon}
\newcommand{\N}{\mathbb{N}}
\newcommand{\R}{\mathbb{R}}
\newcommand{\nat}{\in\N}
\newcommand{\ny}{\nu}
\newcommand{\my}{\mu}
\newcommand{\phii}{\varphi}
\newcommand{\uep}{u_{\eps}}
\newcommand{\cep}{c_{\eps}}
\newcommand{\nep}{n_{\eps}}
\newcommand{\uept}{u_{\eps t}}
\newcommand{\cept}{c_{\eps t}}
\newcommand{\nept}{n_{\eps t}}
\newcommand{\cnep}{c_{0\eps}}
\newcommand{\Om}{\Omega}
\newcommand{\na}{\nabla}
\newcommand{\del}{\partial}
\newcommand{\Liom}{L^\infty(\Om)}
\newcommand{\intttpe}{\int_{t}^{t+1}}
\newcommand{\intnT}{\int_0^T}
\newcommand{\intninf}{\int_0^\infty}
\newcommand{\ntilde}{\widetilde{n}}
\newcommand{\ctilde}{\widetilde{c}}
\newcommand{\amrand}{|_{\rand}}
\newcommand{\rand}{\del\Omega}
\newcommand{\intttz}{\int_{t_0}^{t_0+2}\!}
\newcommand{\Laplace}{\Delta}
\newcommand{\Lap}{\Laplace}
\newcommand{\calF}{\mathcal{F}}
\newcommand{\calG}{\mathcal{G}}
\newcommand{\calP}{\mathcal{P}}
\newcommand{\cbar}{\overline{c}}
\newcommand{\Ombar}{\overline{\Om}}
\newcommand{\dom}{\del \Om}
\newcommand{\dOm}{\dom}
\newcommand{\bdry}{\amrand}
\newcommand{\intom}{\int_\Om}
\newcommand{\intnt}{\int_0^t}
\newcommand{\io}{\intom}
\newcommand{\intdom}{\int_{\dom}}
\newcommand{\Lom}[1]{L^{#1}(\Om)}
\newcommand{\ddt}{\frac{\mathrm{d}}{\mathrm{d}t}}
\newcommand{\set}[1]{\{#1\}}
\newcommand{\setl}[1]{\left\{#1\right\}}
\newcommand{\delny}{\partial_\ny}
\newcommand{\sub}{\subset}
\newcommand{\weakto}{\rightharpoonup}
\newcommand{\weakstarto}{\overset{*}{\weakto}}
\newcommand{\upto}{\nearrow}
\newcommand{\downto}{\searrow}
\newcommand{\embeddedinto}{\hookrightarrow}
\newcommand{\norm}[2][]{\left\|#2\right\|_{#1}}
\newcommand{\nnorm}[2]{\left\|#2\right\|_{#1}}
\newtheorem{theorem}{Theorem}
\numberwithin{theorem}{section}
\newtheorem{lemma}[theorem]{Lemma}
\newtheorem{corollary}[theorem]{Corollary}
\newtheorem{proposition}[theorem]{Proposition}
\newtheorem{definition}[theorem]{Definition}
\newcommand{\epscond}{\ensuremath{\eps>0}}
\newcommand{\Yep}{Y_\eps}
\newcommand{\GNI}{Gagliardo-Nirenberg inequality}
\newcommand{\YI}{Young's inequality}
\newcommand{\onni}{\qquad \mbox{ on } (0,∞)}
\newcommand{\ontme}{\quad \mbox{ on } (0,\Tmaxe)}
\newcommand{\fate}{\onni \mbox{ and for all } ε>0}
\newcommand{\fat}{\qquad \mbox{for all }t>0}
\newcommand{\kl}[1]{\left(#1\right)}
\newcommand{\LT}[2]{L^{#1}((0,T),#2)}
\newcommand{\Tmaxe}{T_{max,ε}}
\newcommand{\epssys}{\eqref{epsys}}
\newcommand{\iNTme}{\in(0,\Tmaxe)}
\author{Johannes Lankeit}
\title{Long-term behaviour in a chemotaxis-fluid system with logistic source}
\date{}
\begin{document}
\maketitle 
\begin{abstract}
\noindent We consider the coupled chemotaxis Navier-Stokes model with logistic source terms
\begin{align*}
 n_t + u\cdot \nabla n &= \Delta n - \chi \nabla \cdot (n \nabla c)  + \kappa n - \mu n^2 \\
 c_t + u\cdot \nabla c &= \Delta c - nc \\
 u_t + (u\cdot \nabla)u &= \Delta u +\nabla P + n\nabla \Phi + f, \quad\qquad \nabla \cdot u=0 
\end{align*}
 in a bounded, smooth domain $\Omega\subset \mathbb{R}^3$ under homogeneous Neumann boundary conditions for $n$ and $c$ and homogeneous Dirichlet boundary conditions for $u$ and with given functions $f\in L^\infty(\Omega\times(0,\infty))$ satisfying certain decay conditions and $\Phi\in C^{1+\beta}(\bar\Omega)$ for some $\beta\in(0,1)$.\\
 We construct weak solutions and prove that after some waiting time they become smooth and finally converge to the semi-trivial steady state $(\frac{\kappa}{\mu},0,0)$.\\
 \textbf{Keywords:} chemotaxis, Navier-Stokes, logistic source, boundedness, large-time behaviour\\
 \textbf{Math Subject Classification (2010):}  35B40, 35K55, 35B65, 35Q30, 92C17
\end{abstract}

\section{Introduction}
Bacteria and sand are different. Although both are heavier than water and will tend to sink if dispersed in it, bacteria may possess the ability to swim -- and to direct their movement toward more favorable environmental conditions, i.e. for example toward higher concentration of oxygen, thus instigating the emergence of bioconvective patterns (see \cite[Sec. 4.2]{pedley_kessler}). Such behaviour can, e.g., be observed if colonies of \textit{Bacillus subtilis} are suspended in a drop of water (see e.g. \cite{kessler_burnett_remick_book}), and models describing this phenomenon, that is, the model proposed by Tuval et al. in \cite{tuval} and variants thereof, have received much attention from the mathematical community over the past few years. 

Before we recall some of the progress made in the analysis of such models, 
let us briefly motivate the form of the system we want to investigate in the present article. 
In order to describe the interaction between bacteria, their fluid environment and oxygen (or another nutrient) contained therein, we introduce scalar-valued functions $n$ and $c$ standing for the concentration of bacteria and oxygen, respectively, and 
a vector-valued function $u$ representing the velocity field of the surrounding water. 
The fluid motion is supposed to be governed by the incompressible Navier-Stokes equations
\[
 u_t + (u•∇)u = Δu + ∇P + n∇Φ + f, \quad\qquad \na\cdot u=0
\]
where we have allowed for an external force $f$ (which nevertheless might best be thought of as being zero in the most prototypical case) and, more importantly, where bouyancy effects are included, which arise from density differences between fluid with and without bacteria, as mandated by the presence of a given gravitational potential $\Phi$. 
$P$ symbolizes the pressure of the fluid, another unknown quantity. 

Oxygen is assumed to diffuse in the manner of linear diffusion, as described by the heat equation. It is moreover transported in the direction of the fluid flow and, finally, consumed with a rate proportional to the amount of bacteria present. 
Combining these effects, the resulting equation is the following: 
\[
 c_t=\underbrace{\Delta c}_{\mathrm{diffusion}} \underbrace{- nc}_{\mathrm{consumption}} \underbrace{- u\cdot \na c }_{\mathrm{transport}}.
\]

The evolution of the bacterial concentration is also influenced by diffusion and transport along the velocity field of the fluid. The cells moreover steer their motion in the direction of the concentration gradient of oxygen, by means of chemotaxis. This gives rise to a contribution $-χ\na \cdot (n\na c)$ to the time derivative of $n$, thus introducing cross-diffusive effects into the model, which lie at the core of the mathematical difficulties accompanying the analysis of chemotaxis systems like the famous Keller-Segel model (\cite{horstmann_I,BBTW}). Therein $χ>0$ is a parameter regulating the strength of the chemotactic attraction.
In addition, we want to allow for population growth to take place in the simplest conceivable manner, namely according to a logistic law, where we denote by $κ$ the effective growth rate of the population and by $μ$ a parameter controlling death by overcrowding. 
In total, these effects yield the equation:
\[
 n_t = \underbrace{Δ n}_{\mathrm{diffusion}} \underbrace{- χ∇•(n ∇ c)}_{\mathrm{chemotaxis}} \underbrace{- u•∇n}_{\mathrm{transport}} \underbrace{+κn-μn^2}_{\mathrm{logistic\ growth}}
\]
With time starting at $0$, spatially the whole scenario is to take place in a bounded domain $Ω\subset ℝ^3$ with smooth boundary, which we want to think of as drop of water resting on a surface. Thus it is quite natural to assume that no fluid motion takes place on the surface of the drop, that is
\[
 u = 0 \qquad \text{on } \dOm,
\]
and that no bacteria cross the boundary between the drop and its surroundings
\[
 \delny n =0 \qquad \text{on } \dOm.
\]
We will also assume that 
\[
 \delny c = 0 \qquad \text{on } \dOm,
\]
that is, that no exchange of oxygen takes place between the fluid environment and its exterior. This assumption is less natural, at least for the part of the boundary that separates water and air, but so far has been employed in almost all papers dealing with chemotaxis fluid interaction from a mathematical viewpoint 
(exceptions being 
early existence results for weak solutions in 2-dimensional bounded domains \cite{Lorz}, 
numerical experiments like in \cite{chertock_etal_numeric}
and, most notably, a recent work by Braukhoff \cite{braukhoff}, where it was shown that in 2- or 3-dimensional convex bounded domains classical or weak solutions, respectively, exist for a chemotaxis-Navier-Stokes model with logistic source if the boundary condition for $c$ is $\delny c=1-c$). 

Thus, in total the system to be considered here is 
\begin{align}\label{eq:system}
 n_t + u•∇n &= Δn - χ∇•(n∇c)  + κn - μn^2 & & \mbox{in } \Om\times(0,∞)\nn\\
 c_t + u•∇c &= Δ c - nc & & \mbox{in } \Om\times(0,∞)\nn\\
 u_t + (u•∇)u &= Δ u +∇P + n∇Φ + f, \quad\qquad ∇•u=0& & \mbox{in } \Om\times(0,∞)\\
 u=0,\quad  \delny n&=\delny c=0\nn& & \mbox{in } \dOm\times(0,∞)\\
 n(\cdot,0)=n_0,\quad c(\cdot,0)&=c_0,\quad u(\cdot,0)=u_0\nn& & \mbox{in } \Om
\end{align}
for some initial data 
\begin{align}\label{eq:init}
 \begin{cases}n_0 >0,\qquad c_0>0\qquad \mbox{in } \Ombar &\\
 n_0\in C^0(\Ombar),\quad c_0\in W^{1,q}(\Om),\quad u_0\in D(A^\alpha)&\end{cases}
\end{align}
with $q>3$, $\alpha\in(\frac34,1)$, where $A$ denotes the realization of the Stokes operator under homogeneous Dirichlet boundary conditions in the solenoidal subspace $L^2_\sigma(\Om)$ of $L^2(\Om)$.

If $κ=μ=0$ (and $f\equiv 0$), this model is an instance of the one for which the existence of global weak solutions in $\Om=ℝ^2$ was shown in \cite{Liu_Lorz}. The existence of global classical solutions in two-dimensional bounded convex domains was discovered in \cite{wk_ctfluid}. Global weak solutions on $\Om=\R^2$ have been treated in \cite{zhang_zheng} under weaker conditions on the initial data. In the setting of \cite{wk_ctfluid}, the convergence of solutions to the stationary state was proven in \cite{wk_arma}; its rate was given in \cite{zhang_li_decay}. Upon neglection of the nonlinear fluid term $(u\cdot∇)u$, that is upon consideration of Stokes flow instead of a Navier-Stokes governed fluid, global weak solutions can also be found in bounded three-dimensional domains (\cite{wk_ctfluid}).
(The results of \cite{wk_ctfluid,wk_arma} have been extended to non-convex domains in \cite{jiang_wu_zheng}.) For the three-dimensional setting (of bounded convex domains) with full Navier-Stokes-fluid and large initial data only recently the existence of weak solutions has been demonstrated by Winkler (\cite{wk_ctfluid3dnastoexist}). He furthermore showed that any eventual energy solution becomes smooth after some waiting time, and converges as $t\to\infty$ (\cite{wk_ns_oxytaxis}).

Other variants of the model that are commonly treated include nonlinear (porous medium type) diffusion of bacteria, where $\Delta n$ is replaced by $\Delta n^m$ for some $m>1$ (see \cite{TaoWk_KS-stokes-porous,TaoWk_ctS3d_nonlin,duan_xiang,chung_kang_kim,zhang_li}), thereby improving chances for finding bounded solutions, 
or, exchanging $χ∇•(n∇c)$ for $∇•(nS(n,c,x)∇c)$, more complex sensitivity functions $S$ (\cite{wk_ctfluid3dnonlineargeneral,wang_xiang,xinru_yulan,sachiko_positiondep,XinruJoh}), 
which may be matrix-valued, thus introducing new mathematical challenges by destroying the natural energy structure of the system and, seen from the biological viewpoint, taking care of more complicated swimming behaviour of bacteria (cf. \cite{ecoli_RHS,ramia_tullok_phanthien,xue_othmer}).

In contrast to \eqref{eq:system}, in the classical Keller-Segel system the chemoattractant is produced by the bacteria themselves and not consumed (accounting for terms $+n-c$ in place of $-nc$ in the second equation of \eqref{eq:system}), and models of Keller-Segel-Stokes type have also been considered (\cite{wang_xiang,tobias_stokes_sublinear}). In $\Om=ℝ^3$, mild solutions to a system encompassing both mechanisms at the same time were proven to exist under a smallness condition on initial data (\cite{kozono}).

Chemotaxis fluid models including logistic growth ($κ,μ>0$) have been treated in \cite{vorotnikov,taowk_quadrdegradation,taowk_quadrdegradation3d,wk_ksns_logsource,braukhoff}. 

In \cite{vorotnikov}, a result on the existence of weak solutions for \eqref{eq:system} is given, and for the case of sufficiently nonlinear cell diffusion, attractors are considered. In a Keller-Segel-Navier-Stokes system with logistic source ($μ>0$, $κ\ge 0$) in two-dimensional bounded domains global classical solutions have been detected in \cite{taowk_quadrdegradation}, which furthermore converge to $0$ if $κ=0$. 
Under the assumptions of a Stokes fluid and sufficiently large $μ$ (explicitly: $μ>23$), in \cite{taowk_quadrdegradation3d} these results have been achieved for three-dimensional bounded domains as well. 
In \cite{wk_ksns_logsource}, for $μ>\frac{1}4 \sqrt{κ_+}χ$ in bounded convex domains $\Om\subℝ^3$ generalized solutions are constructed, which then are shown to converge to the homogeneous steady state with respect to the topology of $\Lom 1\times\Lom p\times\Lom 2$ for $p\in[1,6)$, if certain conditions on $f$ are satisfied.

It is the main goal of the present article to achieve similar results for the consumption-chemotaxis-fluid model \eqref{eq:system}. 
Having to deal with a consumption instead of production term in the $c$-equation seems more beneficial for proving boundedness of solutions and encourages us to hope that the solutions remain bounded and thus exist globally without any further largeness condition on $μ$ except positivity and that the convergence takes place with respect to stronger topologies than in \cite{wk_ksns_logsource}. This is indeed what we will prove. Moreover, we will shed light on asymptotic regularity properties of the solutions we are going to construct. 

Let us state the main results in detail: 
Posing the condition 
\begin{equation}\label{reg:f}
\begin{cases} f\in L^2((0,\infty),L^{\frac 65}(\Om)) \cap L^\infty(\Om\times(0,\infty))\cap C^{β,\frac{β}2}(\Ombar\times[0,\infty)),\\
 \norm[\Lom {\frac32}]{f(\cdot,t)}\to 0 \quad \mbox{as }t \to \infty \qquad \mbox{for some } α>0 \end{cases}
\end{equation}
on the external force on the fluid, we will first (re-)derive the following theorem on global existence of weak solutions: 

\begin{theorem}\label{thm:exweaksol}
Let $\Om\subℝ^3$ be a bounded smooth domain and let $χ,κ\ge 0$, $μ>0$.  
Let $n_0, c_0, u_0$ be as in \eqref{eq:init} with some $q>3$ and $α\in(\frac34,1)$, let $Φ\in C^{1+β}(\Ombar)$ for some $β>0$, and let $f$ satisfy \eqref{reg:f} for some $β\in(0,1)$. 
Then there is a weak solution (in the sense of Definition \ref{def:weaksol} below) to \eqref{eq:system}, which can be approximated by a sequence of solutions $(\nep,\cep,\uep)$ to \eqref{epsys} in a pointwise manner (and moreover with respect to the topologies indicated in Proposition \ref{prop:ex-weaksol}).
\end{theorem}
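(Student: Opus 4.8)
The plan is to follow the standard regularization-and-compactness strategy for chemotaxis-fluid systems. First I would introduce the approximating family \eqref{epsys}: replace the chemotactic sensitivity and/or the transport terms by mollified versions (typically $\nabla\cdot(n_\eps F_\eps(n_\eps)\nabla c_\eps)$ with $F_\eps(s)=\frac{1}{1+\eps s}$, and possibly a Yosida approximation $u_\eps\cdot\nabla$ replaced by $Y_\eps u_\eps\cdot\nabla$), so that for each fixed $\eps>0$ one obtains, by a fixed-point argument together with parabolic and Stokes regularity theory, a global-in-time classical solution $(n_\eps,c_\eps,u_\eps)$ on $(0,\Tmaxe)$, with the positivity $n_\eps,c_\eps>0$ and the maximal existence time $\Tmaxe=\infty$ once a priori bounds are in hand. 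This is routine and presumably \eqref{epsys} together with its basic properties is collected in an earlier lemma; I would cite that.

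The heart of the argument is a set of $\eps$-independent a priori estimates. I would start from the elementary ones: testing the $n_\eps$-equation with $1$ and using the logistic term gives $\intom n_\eps \le C$ and $\mu\intninf\intom n_\eps^2<\infty$; the maximum principle for $c_\eps$ gives $0<c_\eps\le\norm[\Liom]{c_0}$; and the standard energy inequality for the fluid (testing the Navier-Stokes equation with $u_\eps$, using $f\in L^2((0,\infty),L^{6/5}(\Om))$ and the Poincaré/Sobolev inequality to absorb $\intom n_\eps u_\eps\cdot\nabla\Phi$) controls $\norm[L^2(\Om)]{u_\eps}$ and $\intninf\intom|\nabla u_\eps|^2$. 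The more delicate step is propagating regularity of $n_\eps$ and $c_\eps$: here one uses a coupled functional of the type $\ddt\big(\intom n_\eps\ln n_\eps + \tfrac12\intom\frac{|\nabla c_\eps|^2}{c_\eps}\big)$ — exploiting crucially that $c$ is \emph{consumed}, so the troublesome boundary term $\int_{\dom}\frac{1}{c}\frac{\partial|\nabla c|^2}{\partial\nu}$ and the cross terms come with favorable signs on a convex domain, or can be handled on a general smooth domain via the pointwise inequality $\frac{\partial|\nabla c|^2}{\partial\nu}\le C|\nabla c|^2$ on $\dom$ together with a trace estimate. Combined with the dissipation $\mu\intom n_\eps^2$ from the logistic term and Gagliardo--Nirenberg interpolation in $\R^3$, this should yield $\eps$-uniform bounds for $n_\eps$ in, say, $L^{5/3}_{loc}$ or $\LT{10/3}{L^{10/3}(\Om)}$-type spaces and for $c_\eps$ in $\LT{\infty}{W^{1,2}(\Om)}\cap\LT{2}{W^{2,2}(\Om)}$, hence by Stokes regularity also improved bounds on $u_\eps$.

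With these bounds, I would extract (via Aubin--Lions and diagonalization over $\eps=\eps_j\downto0$) a limit $(n,c,u)$ together with the strong $L^1_{loc}$/a.e. convergences needed to pass to the limit in the nonlinear terms $n_\eps\nabla c_\eps$, $n_\eps^2$, $u_\eps\otimes u_\eps$, and $u_\eps c_\eps$; the quadratic term $n_\eps^2$ and the product $n_\eps\nabla c_\eps$ are exactly where strong convergence of $n_\eps$ (from the time-derivative bound $n_{\eps t}$ in some $L^1((0,T),(W^{k,2})^*)$) is indispensable. One then checks that $(n,c,u)$ satisfies the weak formulation of Definition~\ref{def:weaksol} and the initial conditions, and records the convergence topologies as asserted in Proposition~\ref{prop:ex-weaksol}. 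The main obstacle is unquestionably the second step — obtaining the $\eps$-independent higher integrability of $n_\eps$ and $W^{1,2}$-regularity of $c_\eps$ on a merely smooth (not necessarily convex) domain $\Om\subset\R^3$; this is where the consumption structure, the logistic dissipation, and a careful treatment of boundary terms must be orchestrated, and it is the place where the hypothesis $\mu>0$ (with no size restriction) is actually used.
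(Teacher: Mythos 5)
Your proposal follows essentially the same route as the paper: regularize with $\frac{n_\eps}{1+\eps n_\eps}$ and the Yosida-type operator $Y_\eps$, establish the elementary bounds (mass/logistic dissipation, maximum principle for $c_\eps$, fluid energy), then exploit the coupled quasi-energy functional $\io n_\eps\ln n_\eps+\tfrac{\chi}{2}\io\frac{|\na c_\eps|^2}{c_\eps}+K\chi\io|u_\eps|^2$ with the boundary term on a merely smooth domain handled via $\delny|\na c_\eps|^2\le \mathfrak{K}|\na c_\eps|^2$ plus trace interpolation, and finally pass to the limit by time-derivative bounds and Aubin--Lions--Simon compactness, exactly as in Lemmas \ref{lem:ddtnlnn}--\ref{lem:ddtF}, \ref{lem:allthebounds}, \ref{lem:nept}--\ref{lem:uept} and Proposition \ref{prop:ex-weaksol}. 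The only deviations are minor bookkeeping details (e.g.\ the precise integrability classes claimed for $n_\eps$ and $c_\eps$), not a different method.
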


(For weak solutions to \eqref{eq:system} with $f\equiv 0$ see also \cite[Thm. 4.1]{vorotnikov} or, for a setting with different boundary conditions, \cite{braukhoff}.) 
The solutions $(\nep,\cep,\uep)$ to the approximate system \eqref{epsys} that are mentioned in Theorem \ref{thm:exweaksol} (but do not appear in \cite{vorotnikov}) will serve as essential tool also in the proof of our second theorem, which is concerned with the asymptotic behaviour and eventual regularity of solutions.

\begin{theorem}\label{thm:evsmooth}
Let the assumptions of Theorem \ref{thm:exweaksol} be satisfied. Then there are $T>0$ and $γ\in(0,1)$ such that the solution $(n,c,u)$ given by Theorem \ref{thm:exweaksol} satisfies 
\[
 n,c \in C^{2+γ,1+\frac{γ}2}(\Ombar\times[T,\infty)),\quad u\in C^{2+γ,1+\frac{γ}2}(\Ombar\times[T,\infty)).
\]
Moreover, 
\[
 n(\cdot,t)\to \frac{κ}{μ}, \quad c(\cdot,t)\to 0,\quad u(\cdot,t)\to 0\qquad \mbox{as } t\to∞, 
\]
where the convergence takes place with respect to the norm of $C^1(\Ombar)$.
\end{theorem}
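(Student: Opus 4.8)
The plan is to work throughout on the level of the approximate problems \eqref{epsys} and to keep all estimates uniform in $\eps\in(0,1)$, so that the conclusions persist in the limit $\eps\downarrow0$ producing $(n,c,u)$. First I would collect the global bookkeeping bounds: integrating the first equation of \eqref{epsys} gives $\ddt\io\nep\le\kappa\io\nep-\frac{\mu}{|\Om|}\big(\io\nep\big)^2$, so $\io\nep$ and $\intttpe\io\nep^2$ are bounded; the maximum principle gives $\norm[\Liom]{\cep(\cdot,t)}\le\norm[\Liom]{c_0}$; testing the $c$-equation with $\cep$ produces $\ddt\io\cep^2+2\io|\na\cep|^2+2\io\nep\cep^2\le0$, hence $\intninf\io|\na\cep|^2<\infty$ and monotonicity of $\io\cep^2$; and the Navier-Stokes energy inequality, in which $\io\nep\na\Phi\cdot\uep$ and $\io f\cdot\uep$ are absorbed with the help of $\io\nep^2$, the embedding $W^{1,2}(\Om)\embeddedinto L^6(\Om)$ and \eqref{reg:f}, bounds $\io|\uep|^2$ and $\intttpe\io|\na\uep|^2$ --- all uniformly in $\eps$ and $t$. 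The crucial preparatory step is the decay of $c$: since $\ddt\io\cep=-\io\nep\cep\le0$ the quantity $\io\cep(\cdot,t)$ converges and $\intninf\io\nep\cep<\infty$, and combining this with the above bounds and with the logistic structure of the $n$-equation (for which a differential inequality for $\io\ln\nep$ is useful) shows that the limit must be $0$; smoothing estimates for the Neumann heat semigroup then turn $\io\cep(\cdot,t)\to0$ first into $\norm[\Liom]{\cep(\cdot,t)}\to0$ and, via the variation-of-constants representation, into $\norm[\Lom r]{\na\cep(\cdot,t)}\to0$ for some fixed $r>3$, again uniformly in $\eps$.

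The heart of the matter is an \emph{eventual}, $\eps$-independent $L^\infty$-bound for $\nep$. I would pick $T_0$ so large that $\norm[\Lom r]{\na\cep(\cdot,t)}$ stays below a suitable threshold for $t\ge T_0$ and test the $\nep$-equation with $\nep^{p-1}$; thanks to this smallness, a Gagliardo-Nirenberg interpolation and the assumption $r>3$, the cross-diffusive contribution is dominated by the superlinear absorption $-\mu\io\nep^{p+1}$, yielding $\limsup_{t\to\infty}\norm[\Lom p]{\nep(\cdot,t)}\le C(p)$ uniformly in $\eps$ for every $p$, whereupon a Moser-type iteration (or a semigroup bootstrap using the representation formula for $\nep$) produces $T_1>0$ and $C>0$ with $\norm[\Liom]{\nep(\cdot,t)}\le C$ for all $t\ge T_1$ and all $\eps\in(0,1)$. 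Feeding this back, the force $\nep\na\Phi+f$ in the fluid equation is eventually bounded in $L^\infty$, so $L^p$-$L^q$ estimates for the Stokes semigroup give eventual $\eps$-uniform bounds for $\uep$ in, say, $L^\infty((T_2,\infty);D(A^\alpha))$, and the $c$-equation then gives eventual $\eps$-uniform bounds for $\na\cep$ in $L^\infty(\Om\times(T_2,\infty))$. Now all coefficients of the three equations are eventually bounded uniformly in $\eps$, so standard parabolic Hölder estimates yield $\eps$-uniform $C^{\theta,\frac\theta2}$-bounds on $\Ombar\times[T_3,\infty)$ for some $\theta\in(0,1)$, and --- using $\Phi\in C^{1+\beta}(\Ombar)$ and $f\in C^{\beta,\frac\beta2}(\Ombar\times[0,\infty))$ --- a Schauder bootstrap upgrades these to $\eps$-uniform $C^{2+\gamma,1+\frac\gamma2}$-bounds on $\Ombar\times[T,\infty)$ for suitable $\gamma\in(0,1)$, $T>0$. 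Letting $\eps\downarrow0$ along the sequence from Theorem~\ref{thm:exweaksol} and invoking uniqueness of the distributional limit, I would conclude that $(n,c,u)$ coincides on $[T,\infty)$ with a classical solution of this regularity, which is the first assertion.

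For the convergence, I would write $n\na\Phi=(n-\tfrac{\kappa}{\mu})\na\Phi+\tfrac{\kappa}{\mu}\na\Phi$ and absorb the pure-gradient term $\tfrac{\kappa}{\mu}\na\Phi$ into the pressure; granted the $C^{2+\gamma,1+\frac\gamma2}$-bounds on $\Ombar\times[T,\infty)$, it then suffices --- by interpolation between these bounds and $L^2(\Om)$, in the spirit of the Gagliardo-Nirenberg inequality --- to prove $n(\cdot,t)\to\tfrac{\kappa}{\mu}$ and $u(\cdot,t)\to0$ in $L^2(\Om)$ and $\na c(\cdot,t)\to0$ in $L^2(\Om)$ as $t\to\infty$, since $\norm[\Liom]{c(\cdot,t)}\to0$ is already known. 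To this end I would differentiate, along the now smooth solution, the functional
\[
 \mathcal E(t)=\io\Big(n-\tfrac{\kappa}{\mu}-\tfrac{\kappa}{\mu}\ln\tfrac{\mu n}{\kappa}\Big)+b\io|u|^2
\]
with $b>0$ small, obtaining $\mathcal E'(t)\le-\mu\io\big(n-\tfrac{\kappa}{\mu}\big)^2-\tfrac{\kappa}{2\mu}\io\tfrac{|\na n|^2}{n^2}-b\io|\na u|^2+C\io|\na c|^2+h(t)$ on $(T,\infty)$, where the coupling terms are absorbed using $b$ small and Poincaré's inequality, $C\io|\na c|^2\in L^1((0,\infty))$ by the first step and $h\in L^1((0,\infty))$ comes from $f$. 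Integrating this inequality and using that all quantities involved are uniformly continuous on $[T,\infty)$ (a consequence of the uniform $C^{2+\gamma,1+\frac\gamma2}$-bounds), I obtain $\io\big(n-\tfrac{\kappa}{\mu}\big)^2\to0$ and $\io|u|^2\to0$; a similar but simpler argument based on $\intninf\io|\na c|^2<\infty$ gives $\io|\na c|^2\to0$, which finishes the proof.

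The step I expect to be the main obstacle is the eventual, $\eps$-independent $L^\infty$-bound for $\nep$ of the second paragraph: for the three-dimensional chemotaxis-Navier-Stokes system no such bound is available globally in time for general data, even in the presence of the logistic damping, so the whole regularization mechanism rests on quantitatively converting the decay of $\na\cep$ from the first step into a bound strong enough to trigger the parabolic bootstrap, while simultaneously disentangling the two-way coupling with the fluid --- whose own regularity re-enters the estimate for $\nep$ --- presumably through an iterative improvement of the eventual bounds for $(\nep,\cep,\uep)$ in the spirit of \cite{wk_ns_oxytaxis,wk_ksns_logsource}.
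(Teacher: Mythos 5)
Your outline shares the paper's overall architecture (eventual $L^p$ bounds for $\nep$, maximal regularity/Stokes estimates, Schauder bootstrap, a logistic-type Lyapunov functional for the convergence), but the pivot on which everything rests is not proved and, as stated, cannot be proved at that stage: you claim that the variation-of-constants representation turns $\io\cep(\cdot,t)\to0$ into $\norm[\Lom r]{\na\cep(\cdot,t)}\to0$ for some $r>3$, uniformly in $\eps$, and you then use this smallness to absorb the chemotaxis term when testing the $\nep$-equation with $\nep^{p-1}$. At that point the only $\eps$-uniform information on the forcing terms in the $\cep$-equation is $\nep\cep\in L^\infty((0,\infty);\Lom1)\cap L^2_{loc}$ (space-time) and $\uep\cdot\na\cep$ controlled only through $\uep\in L^\infty_t\Lom2\cap L^2_tW^{1,2}$ and $\na\cep\in L^\infty_t\Lom2\cap L^4_{loc}$ (space-time); these norms are too weak for the Neumann heat semigroup smoothing to yield an $L^\infty_t\Lom r$ bound, let alone decay, of $\na\cep$ with $r>3$ in three dimensions (the relevant singular kernels are not time-integrable against $L^2_t$ data, and any bootstrap would require exactly the higher integrability of $\nep$ and $\uep$ that you are trying to derive). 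So the argument is circular at its crucial step -- which you yourself flag as "the main obstacle" -- and the eventual $\eps$-independent $L^p$/$L^\infty$ bound for $\nep$, and with it the whole regularity and convergence chain, is left open.

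The paper closes precisely this hole without ever using decay of $\na\cep$: it first obtains a positive lower bound for $\io\nep$ from the functional $\calG_{ε,B}$ (your $\io\ln\nep$ idea, made quantitative), deduces $\int_t^{t+1}\io c\to0$ via Poincar\'e and $\na c\in L^2(\Om\times(0,\infty))$, and upgrades this to uniform smallness of $\norm[\Liom]{\cep(\cdot,t)}$ using only the Gagliardo--Nirenberg inequality with the $L^4$ space-time bound on $\na\cep$ and the monotonicity of $t\mapsto\norm[\Liom]{\cep(t)}$ (no semigroup representation, no advection difficulties). Then, instead of testing with $\nep^{p-1}$, it studies $y_\eps(t)=\io\nep^p(\eta-\cep)^{-\theta}$: once $\norm[\Liom]{\cep}\le\eta/2$, a careful choice of $\theta,\eta$ makes the cross-diffusive contributions absorbable by the dissipation generated by the weight itself, leaving the ODI $y_\eps'\le\kappa p\,y_\eps-k_1y_\eps^{1+1/p}$; comparison with the solution emanating from ``infinite initial data'' gives an eventual bound independent of the unknown value $y_\eps(T)$ and hence of $\eps$. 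This is the missing idea; with it, your subsequent steps (Stokes $L^p$--$L^q$ and maximal Sobolev regularity with temporal cut-offs, Schauder estimates, and a Lyapunov-functional argument for the convergence, which parallels the paper's use of $\calG_{ε,B}$ together with the uniform H\"older bounds and compactness) could be carried out essentially as you describe.
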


As to the proofs, we will first turn our attention to Theorem \ref{thm:exweaksol}: In Section \ref{sec:exweak}, namely, we will be concerned with solutions to the approximate problem \epssys\ (see Lemma \ref{lem:locex}) and with the derivation of estimates that allow for compactness arguments in constructing solutions to \eqref{eq:system} (Proposition \ref{prop:ex-weaksol}).
The foundation for the acquisition of these estimates will be an examination of the derivative of 
\[
  \io\nep\ln\nep +\frac{\chi}2 \io \frac{|\na \cep|^2}{\cep} + K\chi\io |\uep|^2
\]
for suitable $K>0$ (see Lemma \ref{lem:ddtF}). 
In contrast to a system without logistic source terms in the equation for $n$, mass conservation of the bacteria is not guaranteed in \eqref{eq:system}.
We begin Section \ref{sec:thm2} by finding a suitable substitute, and then, relying on this, prove convergence of $\int_t^{t+1} \io \cep$ and of $\norm[\Lom{∞}]{\cep(•,t)}$ as $t\to\infty$ (Lemma \ref{lem:iintcto0} and Lemma \ref{lem:clinto0}, respectively).
In Lemma \ref{lem:nepbd} we derive a differential inequality for $\io \frac{\nep^p}{(\eta-\cep)^\theta}$ for appropriate parameters $η, \theta$, finally yielding $L^p$-bounds on $n$ whenever the second solution component is small. 
Using these bounds, we then prove eventual Hölder regularity of $\uep$ (Lemma \ref{lem:uepc1alpha}), $\cep$ (Lemma \ref{lem:cc1pa}), and $\nep$ (Lemma \ref{lem:nc1pa}), which can be transferred to $n,c,u$ and turned into higher regularity (Lemma \ref{lem:c2alpha}).
For convergence as $t\to \infty$, we finally draw upon uniform Hölder bounds (Corollary \ref{cor:convc1pa}) and the compact embedding $C^{1+α,\frac{α}2}(\Ombar\times[t,t+1])\embeddedinto C^{1,0}(\Ombar\times[t,t+1])$ as well as some of the properties collected during the course of Section \ref{sec:thm2}; concerning $n$, for example, Lemma \ref{lem:estimatesfromddtG} (and thus, indirectly, Lemma \ref{lem:nleq}) will once more be important.

\textbf{Notation.} Given any function $w$ defined on $\Omega\times[0,T)$ for some $T\in(0,\infty]$, we define $w(t):=w(\cdot,t)$ for any $t\in[0,T)$. We will refer to the partial derivative with respect to the last argument by $\frac{d}{dt} w$. The symbol $\embeddedinto\embeddedinto$ will be used to indicate compact embeddings. For vectors $v,w\inℝ^3$ we let $v\otimes w$ denote the matrix $(v_iw_j)_{i,j=1,2,3}$. $\calP\colon L^p(\Om)\to L^p_\sigma(\Om)$ stands for the Helmholtz projection in $\Lom p$.

\section{Existence of weak solutions}\label{sec:exweak} 

We will start by considering an approximate problem, namely
\begin{subequations}\label{epsys}
\begin{align}
 \nept+\uep•\na\nep=&\Lap\nep-\chi\na\cdot\left(\frac\nep{1+\eps\nep}\na\cep\right)+\kappa\nep-\my\nep^2 \label{eq:nep}\\
  \cept+\uep•\na\cep=&\Lap\cep-\cep\frac1\eps\ln(1+\eps\nep) \label{eq:cep}\\
  \uept+(Y_\eps \uep•\na)\uep=&\Lap\uep+\na P_\eps +\nep\na\Phi+f(x,t) \label{eq:uep}\\
  \delny \nep\bdry = \delny \cep\bdry=&0, \qquad \uep\bdry=0\\
  \nep(\cdot,0) =n_0, \quad \cep(\cdot,0)=&c_0, \quad \uep(\cdot,0)=u_0
 \end{align}
\end{subequations}
where $Y_\eps=(1+\eps A)^{-1}$, and provide estimates for its solutions. In Proposition \ref{prop:ex-weaksol}, these estimates will enable us to construct a solution to \eqref{eq:system} by a limiting process. 
An approximation in this way was also employed in \cite{wk_ctfluid3dnastoexist}, 
\cite{wk_ns_oxytaxis}, 
 \cite{wk_ksns_logsource}.

\subsection{Local existence and basic properties}
First, let us recall that locally these solutions actually exist. 
Because the reasoning is well-established (and not central to later parts of the article), 
we shall only briefly hint at the proofs, both here and in Lemma \ref{lem:epglobal}, where their global existence is indicated. 
\begin{lemma}\label{lem:locex}
 Let $q>3$, $\alpha \in (\frac34,1)$, $β\in(0,1)$, $κ,χ\geq0$, $μ>0$, $Φ\in C^{1+β}(\Ombar)$, $f$ as in \eqref{reg:f}, let $n_0, c_0, u_0$ satisfy \eqref{eq:init} and let $\eps>0$. Then there are $\Tmaxe$ and uniquely determined functions 
\begin{align*}
 \nep&\in C^0(\Ombar\times[0,\Tmaxe))\cap C^{2,1}(\Ombar\times(0,\Tmaxe)),\\
 \cep&\in C^0(\Ombar\times[0,\Tmaxe))\cap C^{2,1}(\Ombar\times(0,\Tmaxe))\cap L^\infty((0,\Tmaxe),W^{1,q}(\Om)),\\
 \uep&\in C^0(\Ombar\times[0,\Tmaxe))\cap C^{2,1}(\Ombar\times(0,\Tmaxe))
\end{align*}
which together with some $P_\eps\in C^{1,0}(\Ombar\times(0,\Tmaxe))$ solve \epssys\ classically, and satisfy $\Tmaxe=\infty$ or 
\begin{equation}\label{eq:continuationcrit}
 \limsup_{t\upto\Tmaxe} \norm[\Lom\infty]{\nep(\cdot,t)}+\norm[W^{1,q}(\Om)]{\cep(\cdot,t)}+\norm[\Lom2]{A^\alpha\uep(\cdot,t)}=\infty.
\end{equation}
\end{lemma}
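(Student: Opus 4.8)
The plan is to invoke a standard fixed-point argument based on semigroup theory, treating the three equations as a coupled system of semilinear parabolic (and Stokes) equations with the regularized nonlinearities that were chosen precisely to make the cross-diffusive and convective terms locally Lipschitz on the relevant solution spaces. First I would fix a reference space: for $n$ I work in $C^0(\Ombar)$ (or, equivalently for the contraction, in $L^q(\Om)$ with $q>3$), for $c$ in $W^{1,q}(\Om)$, and for $u$ in $D(A^\alpha)\subset L^2_\sigma(\Om)$ with $\alpha\in(\frac34,1)$; recall that $\alpha>\frac34$ guarantees the continuous embedding $D(A^\alpha)\embeddedinto L^\infty(\Om)$ and control of $\na u$ in a suitable $L^r$, which is what makes the drift terms $u\cdot\na n$, $u\cdot\na c$ manageable. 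On a short time interval $[0,T_0]$ I set up the Banach space $X_{T_0}:=C^0([0,T_0];C^0(\Ombar))\times C^0([0,T_0];W^{1,q}(\Om))\times C^0([0,T_0];D(A^\alpha))$ and define a map $\Psi$ that sends a triple $(\ntilde,\ctilde,\widetilde u)$ to the mild solution $(n,c,u)$ of the three equations in which every occurrence of the nonlinear coupling uses $(\ntilde,\ctilde,\widetilde u)$:
\begin{align*}
 n(t) &= e^{t\Lap}n_0 + \intnt e^{(t-s)\Lap}\Big[-\chi\na\cdot\big(\tfrac{\ntilde}{1+\eps\ntilde}\na\ctilde\big) - \widetilde u\cdot\na\ntilde + \kappa\ntilde - \my\ntilde^2\Big]\,ds,\\
 c(t) &= e^{t\Lap}c_0 + \intnt e^{(t-s)\Lap}\Big[-\widetilde u\cdot\na\ctilde - \ctilde\,\tfrac1\eps\ln(1+\eps\ntilde)\Big]\,ds,\\
 u(t) &= e^{-tA}u_0 + \intnt e^{-(t-s)A}\calP\Big[-(Y_\eps\widetilde u\cdot\na)\widetilde u + \ntilde\na\Phi + f(\cdot,s)\Big]\,ds,
\end{align*}
where the Neumann heat semigroup $(e^{t\Lap})_{t\ge0}$ acts on the scalar components. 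The key estimates are the smoothing bounds $\norm[L^\infty]{e^{t\Lap}\na\cdot w}\le C(1+t^{-\frac12-\frac{3}{2q}})\norm[L^q]{w}$, $\norm[W^{1,q}]{e^{t\Lap}w}\le C(1+t^{-\frac12})\norm[L^q]{w}$, and $\norm[D(A^\alpha)]{e^{-tA}\calP w}\le C(1+t^{-\alpha-\frac{3}{4}(\frac1r-\frac1\cdot)})\norm[L^r]{w}$; since all the exponents appearing are strictly below $1$, the time integrals of these kernels are finite and vanish as $T_0\to0$, which is exactly what yields both self-mapping of a suitable ball and the contraction property for $T_0$ small. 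The regularizations $\frac{\ntilde}{1+\eps\ntilde}$ (bounded by $\frac1\eps$, with bounded derivative) and $\frac1\eps\ln(1+\eps\ntilde)$ (globally Lipschitz in $\ntilde$) are what make the differences Lipschitz uniformly on bounded sets without needing an a priori bound on $\ntilde$ itself; the factor $Y_\eps=(1+\eps A)^{-1}$ similarly regularizes the Navier-Stokes convection so that $(Y_\eps\widetilde u\cdot\na)\widetilde u$ is controlled in $L^2$ by $\norm[D(A^\alpha)]{\widetilde u}^2$.

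Banach's fixed point theorem then gives a unique mild solution on $[0,T_0]$; uniqueness on the whole maximal interval follows by a standard continuation/connectedness argument, and extending $T_0$ to a maximal $\Tmaxe$ is routine. Parabolic (and Stokes) regularity theory — bootstrapping through $L^p$–$L^q$ estimates and Schauder estimates, using that $n_0\in C^0(\Ombar)$, $c_0\in W^{1,q}(\Om)$, $\Phi\in C^{1+\beta}(\Ombar)$, $f\in C^{\beta,\beta/2}$ — upgrades the mild solution to the claimed $C^{2,1}(\Ombar\times(0,\Tmaxe))$ regularity and produces the associated pressure $P_\eps\in C^{1,0}$ via the standard Stokes theory; positivity of $\nep$ and $\cep$ on their existence interval follows from the maximum principle (the zero-order terms in each equation have the right sign or are linear in the respective unknown). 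Finally, the extensibility criterion \eqref{eq:continuationcrit}: one shows that if the quantity $\norm[L^\infty]{\nep(t)}+\norm[W^{1,q}]{\cep(t)}+\norm[L^2]{A^\alpha\uep(t)}$ stays bounded up to some $T<\infty$, then the right-hand sides in the Duhamel formulas are controlled uniformly, so the fixed-point construction can be restarted from any time close to $T$ with a uniform time step, contradicting maximality — hence either $\Tmaxe=\infty$ or that norm blows up.

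The main obstacle is purely bookkeeping: verifying that every exponent in the smoothing estimates for the composite operators (the $\na\cdot$ applied to an $L^q$ function landing in $L^\infty$; the $A^\alpha$ estimate for the Stokes semigroup applied to an $L^r$ source with $r$ chosen to accommodate the quadratic convection and the $n\na\Phi$ term) is strictly less than $1$, so that the singular-kernel time integrals converge and shrink with $T_0$; this forces the precise conditions $q>3$ and $\alpha\in(\frac34,1)$ and is where a careless choice would break the argument. Since, as the paper notes, this is well-established, I would only sketch the setup of $\Psi$, cite the relevant semigroup estimates, and indicate the bootstrap, rather than carrying out the full calculation.
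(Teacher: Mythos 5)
Your proposal is correct and follows essentially the same route as the paper, which likewise reduces the lemma to a Banach fixed point argument for mild solutions in a ball of $L^\infty((0,T),C^0(\Ombar)\times W^{1,q}(\Om)\times D(A^\alpha))$ (citing \cite{wk_ctfluid3dnastoexist} and \cite{wk_ksns_logsource}), followed by parabolic and Stokes regularity theory to obtain classical solutions, with the extensibility criterion coming from the fact that the local existence time depends only on the norms in \eqref{eq:continuationcrit}. The only differences are presentational: you spell out the Duhamel maps and smoothing exponents that the paper leaves to the cited references.
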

\begin{proof}
 The proof follows the reasoning of \cite[Lemma 2.1]{wk_ctfluid3dnastoexist} if some of the adaptions necessary in \cite[Lemma 2.2]{wk_ctfluid3dnastoexist} and \cite[Lemma 3.1]{wk_ksns_logsource} are taken into account.

 Banach's fixed point theorem applied in closed ball in $\LT{\infty}{C^0(\Ombar)\times W^{1,q}(\Om)\times D(A^\alpha)}$ to a function whose fixed points are mild solutions to the system establishes the existence of such solutions on a time interval $[0,T)$, where $T$ depends on the norms featured in \eqref{eq:continuationcrit} only. By an invocation of standard regularity theory for parabolic equations and the Stokes semigroup these solutions turn into classical solutions. 
\end{proof}

For the rest of the article 
 let us fix parameters $χ,κ\geq 0$, $μ>0$, $α\in(\frac34,1)$, $q>3$, $β\in(0,1)$, $f$ as in \eqref{reg:f}, $\Phi\in C^{1+β}(\Ombar)$, initial data $n_0,c_0,u_0$ satisfying \eqref{eq:init} and, given \epscond, let us denote by $(\nep,\cep,\uep)$ the corresponding solution to \eqref{epsys}.

\begin{lemma} For any $x\in\Om$, \epscond, $t\in(0,\Tmaxe)$ we have $\nep(x,t)\geq0$ and $\cep(x,t)\geq 0$.
\end{lemma}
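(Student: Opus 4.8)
The plan is to argue by the parabolic maximum principle, treating \eqref{eq:nep} and \eqref{eq:cep} as linear scalar equations whose coefficients are assembled from the already-constructed, smooth solution components of Lemma \ref{lem:locex}, and to establish nonnegativity of $\nep$ first, since positivity of the reaction coefficient in the $\cep$-equation relies on $\nep\geq0$ but not conversely. The only point requiring care is that the rewriting of \eqref{eq:nep} in non-divergence form involves $\tfrac{1}{1+\eps\nep}$, so one must first know that $1+\eps\nep$ does not vanish; this I would settle by a short continuity (bootstrap) argument running alongside the maximum principle.

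Concretely, for $\nep$ I would expand the chemotactic divergence, $\na\cdot\big(\tfrac{\nep}{1+\eps\nep}\na\cep\big)=\tfrac{1}{(1+\eps\nep)^2}\na\nep\cdot\na\cep+\tfrac{\nep}{1+\eps\nep}\Lap\cep$, so that on any slab $\Ombar\times[\tau,T]$ with $0<\tau<T<\Tmaxe$ equation \eqref{eq:nep} becomes
\[
 \nept=\Lap\nep-\Big(\uep+\tfrac{\chi}{(1+\eps\nep)^2}\na\cep\Big)\cdot\na\nep+\Big(\kappa-\my\nep-\tfrac{\chi}{1+\eps\nep}\Lap\cep\Big)\nep ,
\]
valid wherever $1+\eps\nep\neq0$. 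Since $n_0>0$ on $\Ombar$ and $\nep\in C^0(\Ombar\times[0,\Tmaxe))$, let $t_1\in(0,\Tmaxe]$ be maximal with $1+\eps\nep>0$ throughout $\Ombar\times[0,t_1)$. On $[0,t_1)$ the coefficients above lie in $C^0(\Ombar\times[\tau,T])$ by Lemma \ref{lem:locex}, hence are bounded, so the maximum principle together with the homogeneous Neumann condition $\delny\nep\amrand=0$ and $\nep(\cdot,0)=n_0>0$ gives $\nep\geq0$ on $\Ombar\times[\tau,T]$; letting $\tau\downarrow0$ and using continuity yields $\nep\geq0$ on $\Ombar\times[0,t_1)$. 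But then $1+\eps\nep\geq1$ there, and continuity forces $t_1=\Tmaxe$; hence $\nep\geq0$ (indeed $1+\eps\nep\geq1$) on all of $\Ombar\times[0,\Tmaxe)$.

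With this in hand, $\tfrac{1}{\eps}\ln(1+\eps\nep)\geq0$ is a nonnegative, locally bounded zeroth-order coefficient, so \eqref{eq:cep} reads $\cept=\Lap\cep-\uep\cdot\na\cep-\big(\tfrac{1}{\eps}\ln(1+\eps\nep)\big)\cep$ with the favourable sign, and the maximum principle with $\delny\cep\amrand=0$ and $c_0>0$ gives $\cep\geq0$ on $\Ombar\times[0,\Tmaxe)$. If one prefers an energy argument, testing \eqref{eq:cep} against $-\cep_-:=-\max\{-\cep,0\}$, discarding the convective term via $\na\cdot\uep=0$ and $\uep\amrand=0$, and using $\nep\geq0$, produces $\tfrac12\ddt\io\cep_-^2=-\io|\na\cep_-|^2-\io\tfrac{1}{\eps}\ln(1+\eps\nep)\,\cep_-^2\leq0$, whence $\cep_-\equiv0$ because $\cep(\cdot,0)=c_0>0$; the same testing idea applies to $\nep$ (using Young's inequality to absorb the chemotactic contribution into $\io|\na\nep_-|^2$) once the positivity of $1+\eps\nep$ has been secured as above. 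As noted, that positivity is the sole subtle ingredient; the rest is a routine invocation of standard parabolic comparison principles.
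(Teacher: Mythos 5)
Your argument is correct and is essentially the paper's proof: the paper simply applies the parabolic comparison principle with $0$ as a subsolution of \eqref{eq:nep} and \eqref{eq:cep}, which is exactly what you do, only spelled out (non-divergence rewriting, continuity of $1+\eps\nep$, favourable sign in the $\cep$-equation, plus an optional energy variant). The only cosmetic slip is invoking $\nep(\cdot,0)=n_0>0$ as datum on a slab $[\tau,T]$ that excludes $t=0$; this is repaired at once by using continuity and $n_0>0$ to get $\nep>0$ on $\Ombar\times[0,\tau]$ for small $\tau$ and then comparing on $[\tau,T]$, where the coefficients are bounded.
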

\begin{proof}
 An application of the parabolic comparison principle to the subsolution $0$ of \eqref{eq:nep} or \eqref{eq:cep}, respectively, immediately results in the claimed nonnegativity.
\end{proof}

Similarly, we obtain boundedness of $\cep$.

\begin{lemma}\label{lem:cbd}
 There is $C>0$ such that for any \epscond\ and for any $t>\iNTme$, 
\begin{equation}
 \norm[\Liom]{\cep(t)}\leq C 
\end{equation}
 and, for any $\eps>0$, $t\mapsto \norm[\Liom]{\cep(t)}$ is nonincreasing on $(0,\infty)$.
\end{lemma}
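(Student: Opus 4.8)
The plan is to establish both assertions of Lemma \ref{lem:cbd} by exploiting the structure of the $c$-equation \eqref{eq:cep}, in which the zero-order term $-\cep\frac1\eps\ln(1+\eps\nep)$ has a favourable sign thanks to the nonnegativity of $\nep$ (already proven in the preceding lemma), and the transport term $\uep\cdot\na\cep$ is divergence-free because $\na\cdot\uep=0$. Concretely, I would work with the quantity $M_\eps(t):=\norm[\Liom]{\cep(t)}$ and show it is nonincreasing by a comparison-type argument.

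First I would note that $\frac1\eps\ln(1+\eps\nep)\geq0$ pointwise, so the spatially constant function $w(x,t):=\norm[\Liom]{c_0}$ is a supersolution of \eqref{eq:cep}: it satisfies $w_t+\uep\cdot\na w=0\geq\Lap w-w\frac1\eps\ln(1+\eps\nep)=-\norm[\Liom]{c_0}\frac1\eps\ln(1+\eps\nep)$, it has vanishing normal derivative on $\dOm$, and $w(\cdot,0)=\norm[\Liom]{c_0}\geq c_0$ by \eqref{eq:init} (here one uses that $c_0\in W^{1,q}(\Om)$ with $q>3$ embeds into $C^0(\Ombar)$, so $\norm[\Liom]{c_0}$ is finite). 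By the parabolic comparison principle applied on $\Om\times(0,\Tmaxe)$ — valid since $\uep$ is smooth on $\Ombar\times(0,\Tmaxe)$ and continuous up to $t=0$ — we obtain $\cep\leq\norm[\Liom]{c_0}$ throughout, which in particular, for any $t>\iNTme$ (indeed for any $t\geq0$), yields the bound \eqref{eq:continuationcrit}-free estimate with $C:=\norm[\Liom]{c_0}$, a constant independent of $\eps$.

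For the monotonicity of $t\mapsto M_\eps(t)$ on $(0,\infty)$, the cleanest route is to repeat the same comparison but starting from an arbitrary time $t_0>0$: the constant $\norm[\Liom]{\cep(t_0)}$ is a supersolution of \eqref{eq:cep} on $\Om\times(t_0,\Tmaxe)$ dominating $\cep(\cdot,t_0)$, so $\cep(\cdot,t)\leq\norm[\Liom]{\cep(t_0)}$ for all $t\in[t_0,\Tmaxe)$, hence $M_\eps(t)\leq M_\eps(t_0)$ whenever $t\geq t_0$. (Strictly, the conclusion $\Tmaxe=\infty$ is only established in the later Lemma \ref{lem:epglobal}; at this stage one states the monotonicity on $(0,\Tmaxe)$, and the phrasing "on $(0,\infty)$" is justified retroactively.) Alternatively one can differentiate $\io\cep^{2k}$, integrate by parts using the divergence-free condition to kill the convective term and the Neumann condition to handle the Laplacian, discard the manifestly nonpositive consumption contribution, conclude $\ddt\io\cep^{2k}\leq0$, and let $k\to\infty$; this also gives monotonicity of the $L^{2k}$ norms and, in the limit, of the $L^\infty$ norm.

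I do not anticipate a serious obstacle here — this is a soft maximum-principle argument. The only points requiring a modicum of care are: checking that the comparison principle is applicable given that the coefficient $\frac1\eps\ln(1+\eps\nep)$ and the drift $\uep$ are only as regular as the local theory in Lemma \ref{lem:locex} provides (continuous on $\Ombar\times[0,\Tmaxe)$, $C^{2,1}$ in the interior), which suffices for the standard parabolic comparison principle under Neumann conditions; and ensuring the initial datum comparison uses $c_0\in C^0(\Ombar)$, which follows from $W^{1,q}(\Om)\embeddedinto C^0(\Ombar)$ for $q>3$ in three dimensions. The independence of $C$ from $\eps$ is automatic since the bound $\norm[\Liom]{c_0}$ involves only the fixed initial data.
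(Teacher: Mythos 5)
Your argument is exactly the paper's: the paper proves both assertions by the parabolic comparison principle with the constant $C=\norm[\Liom]{c_0}$ as supersolution, which is precisely your construction (including the restart from an arbitrary $t_0$ for monotonicity, which the paper leaves implicit). Your elaboration of the regularity and initial-datum checks is fine and adds nothing that conflicts with the paper's one-line proof.
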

\begin{proof}
 With $C=\norm[\Liom]{c_0}$, both assertions are a consequence of the parabolic comparison principle.
\end{proof}

Another quantity whose boundedness, in this case in $\Lom 2$, quickly results from the second equation is the gradient of $\cep$:
\begin{lemma}\label{lem:intnac2}
 There is $C>0$ such that for any \epscond 
\begin{equation}\label{eq:intnac}
 \int_0^{\Tmaxe}\!\!\io |\na\cep|^2\leq C.
\end{equation}
\end{lemma}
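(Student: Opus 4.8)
The plan is to read off the claim from a standard energy identity for $\tfrac12\io\cep^2$ obtained from the second equation of \eqref{epsys}, exploiting that every term on the right-hand side except $-\io|\na\cep|^2$ has a favourable sign. First I would multiply \eqref{eq:cep} by $\cep$ and integrate over $\Om$; using the Neumann condition $\delny\cep\bdry=0$ to integrate by parts in the diffusion term, this gives, for every $t\iNTme$,
\[
 \tfrac12\ddt\io\cep^2 = -\io|\na\cep|^2 - \io\cep^2\,\tfrac1\eps\ln(1+\eps\nep) - \io(\uep\cdot\na\cep)\,\cep.
\]

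Next I would dispose of the two potentially problematic terms on the right. The convective term vanishes: since $\na\cdot\uep=0$ and $\uep\bdry=0$, we have $\io(\uep\cdot\na\cep)\,\cep = \tfrac12\io\uep\cdot\na(\cep^2) = -\tfrac12\io(\na\cdot\uep)\,\cep^2 = 0$. The sink term has the right sign: since $\nep\ge 0$ by the nonnegativity established above, also $\tfrac1\eps\ln(1+\eps\nep)\ge 0$, so $-\io\cep^2\,\tfrac1\eps\ln(1+\eps\nep)\le 0$. Therefore
\[
 \tfrac12\ddt\io\cep^2 + \io|\na\cep|^2 \le 0 \ontme.
\]

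Finally, integrating this differential inequality over $(0,T)$ for arbitrary $T\iNTme$, dropping the nonnegative term $\tfrac12\io|\cep(\cdot,T)|^2$ and then letting $T\upto\Tmaxe$ yields
\[
 \int_0^{\Tmaxe}\!\io|\na\cep|^2 \le \tfrac12\io c_0^2 = \tfrac12\norm[\Lom2]{c_0}^2 =: C,
\]
which is finite because $c_0\in W^{1,q}(\Om)\embeddedinto L^2(\Om)$ on the bounded domain $\Om$ and, crucially for later uniform-in-$\eps$ arguments, independent of $\eps$. There is essentially no obstacle here; the only points requiring (routine) care are that the regularity asserted in Lemma \ref{lem:locex} is more than enough to justify the above integrations by parts and the differentiation of $t\mapsto\io\cep^2$ on $(0,\Tmaxe)$, and that none of the constants appearing along the way depend on $\eps$.
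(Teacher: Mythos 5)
Your proposal is correct and follows essentially the same route as the paper: multiply \eqref{eq:cep} by $\cep$, integrate by parts, observe that the transport term vanishes by $\na\cdot\uep=0$ and the consumption term is nonpositive, then integrate in time to bound $\int_0^{\Tmaxe}\io|\na\cep|^2$ by $\tfrac12\io c_0^2$. No issues.
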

\begin{proof}
 Let $ε>0$. 
 Upon multiplication by $\cep$, integration over $\Om$ and integration by parts, \eqref{eq:cep} results in 
\[
 \io \cep\cept = -\io |\na\cep|^2 -\io \cep^2\frac1\eps\ln(1+\eps\nep) + \io \cep \uep•\na\cep \ontme, 
\]
where the last term vanishes by $\io \cep \uep•\na\cep=\frac12\io\uep•\na(\cep^2)=-\frac12\io\cep^2\na\cdot\uep=0$ due to $\na\cdot \uep(t)=0$ for all $t\iNTme$, and integration with respect to time entails 
\[
 \frac12\io \cep^2(t) +\intnt \io |\na\cep|^2\leq \frac12\io \cnep^2\qquad \mbox{for any }t\iNTme,
\]
so that we may conclude \eqref{eq:intnac} by taking $t\upto\Tmaxe$. 
\end{proof}

In contrast to the situation without source terms, we cannot hope for mass conservation in the first component. Nevertheless, the following inequality still holds:
\begin{lemma}\label{lem:nleq}
 There is $C>0$ such that for any \epscond 
 \[
  \io \nep(t) \leq C \  \mbox{ for all } t\iNTme \quad \mbox{ and }\quad \int_t^{t+τ}\!\!\io \nep^2 \leq C \  \mbox{ for any } t\in(0,\Tmaxe-τ), 
 \]
 where $τ:=\min\set{1,\frac12\Tmaxe}$.
\end{lemma}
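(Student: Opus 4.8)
The plan is to integrate the first equation of \epssys\ over $\Om$ and exploit the logistic absorption term to control both the mass and the spacetime $L^2$-norm of $\nep$. First I would test \eqref{eq:nep} against the constant $1$ and integrate by parts: the diffusion term and the chemotaxis term both vanish because of the homogeneous Neumann boundary condition $\delny\nep=\delny\cep=0$, and the convective term $\io\uep\cdot\na\nep = -\io\nep\,\na\cdot\uep = 0$ since $\na\cdot\uep(t)=0$. This yields the identity
\[
 \ddt\io\nep = \kappa\io\nep - \mu\io\nep^2 \ontme.
\]
By the Cauchy--Schwarz inequality $\io\nep^2 \geq \frac{1}{|\Om|}\kl{\io\nep}^2$, so writing $y(t):=\io\nep(t)$ we obtain the scalar ODE inequality $y' \leq \kappa y - \frac{\mu}{|\Om|}y^2$. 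A standard comparison argument then gives $y(t)\leq\max\set{y(0),\frac{\kappa|\Om|}{\mu}}=\max\set{\io n_0,\frac{\kappa|\Om|}{\mu}}=:C_1$ for all $t\iNTme$, and this bound is uniform in $\eps$ because $n_0$ is fixed.

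For the second assertion I would return to the same identity and integrate in time over $(t,t+\tau)$, where $\tau=\min\set{1,\frac12\Tmaxe}$ guarantees the interval stays inside $(0,\Tmaxe)$. This gives
\[
 \mu\int_t^{t+\tau}\!\!\io\nep^2 = \io\nep(t) - \io\nep(t+\tau) + \kappa\int_t^{t+\tau}\!\!\io\nep \leq \io\nep(t) + \kappa\tau\sup_{s\in(0,\Tmaxe)}\io\nep(s),
\]
since $\io\nep(t+\tau)\geq0$. Using the first part and $\tau\leq1$, the right-hand side is bounded by $C_1+\kappa C_1 = (1+\kappa)C_1$, so $\int_t^{t+\tau}\io\nep^2 \leq \frac{(1+\kappa)C_1}{\mu}=:C_2$, again uniformly in $\eps$ and in $t\in(0,\Tmaxe-\tau)$. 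Taking $C:=\max\set{C_1,C_2}$ completes the proof.

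I do not expect a genuine obstacle here: the estimate is the classical mass/dissipation bound for logistic-type reaction-diffusion equations, and the only points requiring a little care are the vanishing of the convective term (which needs $\na\cdot\uep=0$, already used in Lemma \ref{lem:intnac2}) and making the comparison-principle argument for the ODE rigorous at $t\downto0$ — one may instead argue on $(\delta,\Tmaxe)$ and let $\delta\downto0$, or note that $\nep$ is continuous up to $t=0$ with $\io\nep(0)=\io n_0$, so the differential inequality for $y$ holds on the closed interval and the comparison bound follows directly. The factor $\tau$ rather than $1$ in the time-integral statement is only a device to accommodate the possibility $\Tmaxe<\infty$; once global existence (Lemma \ref{lem:epglobal}) is available one may of course take $\tau=1$.
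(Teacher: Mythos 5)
Your proposal is correct and follows essentially the same route as the paper: integrate \eqref{eq:nep} over $\Om$, use Hölder/Cauchy--Schwarz to obtain the ODE inequality $\ddt\io\nep\le\kappa\io\nep-\frac{\mu}{|\Om|}\bigl(\io\nep\bigr)^2$, conclude boundedness of $\io\nep$ by ODE comparison, and then integrate the mass identity in time to bound $\int_t^{t+\tau}\io\nep^2$. The details you add (vanishing of the transport term, nonnegativity of $\io\nep(t+\tau)$, uniformity in $\eps$ through the fixed initial datum) are exactly the points implicit in the paper's shorter argument.
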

\begin{proof}
 Integration of \eqref{eq:nep} and an application of H\"older's inequality yield that for any $ε>0$
\[
 \ddt \io \nep = \kappa\io\nep -\my\io\nep^2\leq \kappa\io \nep - \frac{\my}{|\Om|} \left(\io\nep\right)^2 \ontme,
\]
so that an ODE comparison argument gives boundedness of $\io\nep(t)$ and integration with respect to time allows to conclude the existence of a bound on $\int_t^{t+τ}\!\!\!\io \nep^2$ by means of $\my\int_t^{t+τ}\!\!\!\io\nep^2=\kappa\int_t^{t+τ}\!\!\!\io\nep+\io \nep(t)-\io\nep(t+τ)$.
\end{proof}

\subsection{A priori estimates implied by an energy type inequality}
We want to derive a (quasi-)energy inequality for the function
\begin{equation}\label{eq:defF}
  \io\nep\ln\nep +\frac{\chi}2 \io \frac{|\na \cep|^2}{\cep} + K\chi\io |\uep|^2.
\end{equation}
As preparation, we first deal with the derivatives of the summands separately: 

\begin{lemma}\label{lem:ddtnlnn}
 There is $C>0$ such that for any \epscond
\[
 \ddt \io \nep\ln\nep + \frac\my2 \io \nep^2\ln\nep + \io \frac{|\na\nep|^2}{\nep} \leq  \chi\io \frac{\na \nep•\na\cep}{1+\eps\nep} + C \ontme.
\]
\end{lemma}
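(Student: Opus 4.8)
The plan is to test the $n_\eps$-equation \eqref{eq:nep} against $\ln\nep$ (equivalently, against $1+\ln\nep$, the difference being an integral that vanishes by Lemma \ref{lem:nleq}-type reasoning, or more cleanly by noting $\ddt\io\nep$ is already controlled). First I would compute, using $\del_t(\nep\ln\nep)=(1+\ln\nep)\nept$ and the fact that $\del_t\io\nep = \kappa\io\nep-\mu\io\nep^2$ is bounded (so that the contribution of the lone "$1$" in $1+\ln\nep$ only adds terms we can absorb into $C$ via Lemma \ref{lem:nleq}), that
\[
 \ddt\io\nep\ln\nep = \io(1+\ln\nep)\bigl(\Lap\nep-\chi\na\cdot(\tfrac{\nep}{1+\eps\nep}\na\cep)-\uep\cdot\na\nep+\kappa\nep-\mu\nep^2\bigr).
\]
The diffusion term, after integration by parts and using the homogeneous Neumann condition, gives $-\io\frac{|\na\nep|^2}{\nep}$, which is the good term on the left. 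The chemotaxis term, integrated by parts, produces $\chi\io\frac{\na\nep\cdot\na\cep}{1+\eps\nep}$ — exactly the right-hand side of the claim — after observing that the extra factor $\frac{\eps\nep}{(1+\eps\nep)^2}$ coming from differentiating $\frac{\nep}{1+\eps\nep}$ against $\ln\nep$ combines favorably; in fact writing $(1+\ln\nep)\na\cdot(\frac{\nep}{1+\eps\nep}\na\cep)$ and integrating by parts gives $-\io\frac{\na\nep}{\nep}\cdot\frac{\nep}{1+\eps\nep}\na\cep = -\io\frac{\na\nep\cdot\na\cep}{1+\eps\nep}$, and the boundary term vanishes, so after moving to the other side we get the stated $+\chi\io\frac{\na\nep\cdot\na\cep}{1+\eps\nep}$.

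Next I would treat the transport term: $-\io(1+\ln\nep)\uep\cdot\na\nep = -\io\uep\cdot\na(\nep\ln\nep) = \io(\nep\ln\nep)\na\cdot\uep = 0$ since $\na\cdot\uep=0$ and $\uep$ vanishes on $\del\Om$ — this is the same computation used in Lemma \ref{lem:intnac2}. Finally the logistic contribution $\io(1+\ln\nep)(\kappa\nep-\mu\nep^2)$ must be handled: the dominant dangerous term is $-\mu\io\nep^2\ln\nep$, which we want to keep (at least half of it) on the left, while the terms $\kappa\io\nep\ln\nep+\kappa\io\nep-\mu\io\nep^2$ need to be bounded by $\frac\mu2\io\nep^2\ln\nep + C$. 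The point is that $\kappa\io\nep\le C$ and $-\mu\io\nep^2\le 0$ already, and $\kappa\io\nep\ln\nep$ is controlled by splitting $\Om$ into $\{\nep\le e\}$, where $\nep\ln\nep\le e$ contributes only a constant, and $\{\nep> e\}$, where $\kappa\nep\ln\nep \le \frac\mu2\nep^2\ln\nep$ as soon as $\nep\ge 2\kappa/\mu$ — so choosing the threshold $\max\{e,2\kappa/\mu\}$ absorbs the remainder into $\frac\mu2\io\nep^2\ln\nep$ plus a finite constant times $|\Om|$. (The negative part of $\ln\nep$ where $\nep<1$ only helps, as $\nep\ln\nep\ge -1/e$ there.)

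The main obstacle, such as it is, is bookkeeping rather than conceptual: one must be careful that $\ln\nep$ is not integrable a priori near $\nep=0$, so the manipulation $\io(1+\ln\nep)\Lap\nep = -\io\frac{|\na\nep|^2}{\nep}$ should be justified either by the smoothness of $\nep$ on $(0,\Tmaxe)$ together with strict positivity of $\nep$ on $\Ombar\times(0,\Tmaxe)$ (which follows from the strong maximum principle applied to \eqref{eq:nep}, since $n_0>0$), or by a standard approximation $\ln(\nep+\delta)$ and $\delta\downto 0$. Granting positivity and smoothness — both available from Lemma \ref{lem:locex} and parabolic regularity — every integration by parts is legitimate and the boundary terms genuinely vanish under the Neumann condition, so the computation closes and yields precisely the asserted differential inequality with a constant $C$ depending only on $\kappa,\mu,|\Om|$ and the bound from Lemma \ref{lem:nleq}, hence uniform in $\eps$.
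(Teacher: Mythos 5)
Your proposal is correct and follows essentially the same route as the paper: test \eqref{eq:nep} against $1+\ln\nep$, integrate by parts using $\delny\nep=\delny\cep=0$, $u_\eps\bdry=0$ and $\na\cdot\uep=0$ to produce $-\io\frac{|\na\nep|^2}{\nep}$ and $\chi\io\frac{\na\nep\cdot\na\cep}{1+\eps\nep}$ while the transport term vanishes, and absorb the logistic contributions into half of $-\mu\io\nep^2\ln\nep$. The only cosmetic difference is that the paper absorbs these terms via the pointwise boundedness from above of $s\mapsto\kappa s-\mu s^2$ and $s\mapsto(\kappa s-\frac{\mu}{2}s^2)\ln s$, whereas you split $\Om$ at the threshold $\max\{e,2\kappa/\mu\}$ and invoke Lemma \ref{lem:nleq} -- the same elementary observation in different clothing; your extra remark justifying the integrations by parts through strict positivity of $\nep$ is a welcome (if unstated in the paper) precaution.
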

\begin{proof}
 First we observe that $s\mapsto \kappa s- \my s^2, s\in[0,\infty)$, and $s\mapsto (\kappa s-\frac\my2 s^2)\ln s, s\in (0,\infty)$, are bounded from above by some constant $C_1$. Using these estimates and \eqref{eq:nep}, from integration by parts we obtain
\begin{align*}
 \ddt \io \nep\ln\nep =& \io \nept\ln\nep+\io\nept\\
&=\io \Lap \nep(\ln\nep)-\chi\io \ln\nep \na\cdot\kl{\frac{\nep}{1+\eps\nep}\na \cep} - \io \uep•\na\nep\ln\nep \\
&\qquad + \kappa\io \nep\ln\nep-\my\io \nep^2\ln\nep + \kappa\io\nep-\my\io \nep^2\\
&\leq -\io \frac{|\na \nep|^2}{n} + \chi \io \frac{\na \nep•\na\cep}{1+\eps n} - \frac\my2 \io \nep^2\ln\nep + 2C_1 \ontme
\end{align*}
for any $ε>0$, so that the claim results with $C=2C_1$.
\end{proof}

In the next lemma we will collect statements that will enable us to deal with terms arising from differentiation of $\io|\na\sqrt{\cep}|^2$. In particular, it is this lemma that will render any convexity condition on the domain unnecessary. 
The proofs are either contained in or adapted from the articles \cite{mizoguchi_souplet,sachikosekitomomi,wk_ctfluid}.

\begin{lemma}\label{lem:bdryterm}
 i) Let $w\in C^2(\Ombar)$ satisfy $\delny w=0$ on $\dOm$. Then 
 \[
  \delny |\na w|^2 \leq \mathfrak{K} |\na w|^2,
 \]
 where $\mathfrak{K}$ is an upper bound on the curvature of $\dOm$.\\
 ii) Furthermore, for any $\eta>0$ there is $C(η)>0$ such that every $w\in C^2(\Ombar)$ with $\delny w=0$ on $\dOm$ fulfils
\[ \norm[L^2(\dOm)]{\na w}\leq \eta \norm[L^2(\Om)]{\Delta w} + C(\eta)\norm[L^2(\Om)]{w}.\]
 iii) For any positive $w\in C^2(\Ombar)$ 
 \begin{equation}\label{eq:Deltasqrt}
  \norm[L^2(\Om)]{\Delta \sqrt{w}} \leq \frac12\norm[L^2(\Om)]{\sqrt{w}\Delta \log w} + \frac14 \norm[L^2(\Om)]{w^{-\frac32}|\na w|^2}.
 \end{equation}
 iv) For any positive $w\in C^2(\Ombar)$ with $\delny w=0$ on $\dOm$ we have 
 \[
  -2\io \frac{|\Lap w|^2}{w} +\io \frac{|\na w|^2\Lap w}{w^2} = -2\io w|D^2\log w|^2 + \intdom \frac1w\delny|\na w|^2.
 \]
 v) There is $k>0$ such that for all positive $w\in C^2(\Ombar)$ with $\delny w=0$ on $\dOm$ the inequality 
 \[
  \io w|D^2\ln w|^2\geq k\io\frac{|\na w|^4}{w^3}
 \]
holds.\\
vi) There are $C>0$ and $k>0$ such that every positive $w\in C^2(\Ombar)$ fulfilling $\delny w=0$ on $\dOm$ satisfies
\begin{equation}\label{eq:lembdrytermvi}
  -2\io \frac{|\Lap w|^2}{w} +\io \frac{|\na w|^2\Lap w}{w^2} \leq -k \io w|D^2\ln w|^2 -k \io\frac{|\na w|^4}{w^3} + C\io w.
\end{equation}
\end{lemma}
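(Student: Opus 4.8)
The plan is to combine parts (iv) and (v) of the present lemma to bound the left-hand side by a negative multiple of $\io\frac{|\na w|^4}{w^3}$ plus a boundary term, and then to absorb that boundary term using parts (i) and (ii). First I would invoke part (iv) to rewrite
\[
 -2\io \frac{|\Lap w|^2}{w} +\io \frac{|\na w|^2\Lap w}{w^2} = -2\io w|D^2\log w|^2 + \intdom \frac1w\delny|\na w|^2.
\]
By part (v), the first term on the right is $\le -k_0\io w|D^2\ln w|^2 - k_0\io\frac{|\na w|^4}{w^3}$ for a suitable $k_0>0$ (splitting the $-2\io w|D^2\log w|^2$ as, say, $-\io w|D^2\log w|^2 - \io w|D^2\log w|^2$ and applying (v) to one of the halves). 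So it remains to control the boundary integral.

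For the boundary term, part (i) gives $\delny|\na w|^2 \le \mathfrak{K}|\na w|^2$ pointwise on $\dOm$, hence, since $w>0$,
\[
 \intdom \frac1w\delny|\na w|^2 \le \mathfrak{K}\intdom \frac{|\na w|^2}{w}.
\]
Now I would estimate $\frac{|\na w|^2}{w}$ on $\dOm$ in terms of $\frac{|\na\sqrt w|^2}{1}$-type quantities; more precisely, writing $v:=\sqrt w$ (which is again positive, $C^2$, and satisfies $\delny v=0$), we have $|\na w|^2 = 4w|\na v|^2$, so $\frac{|\na w|^2}{w} = 4|\na v|^2$ on all of $\Ombar$, and therefore $\intdom\frac{|\na w|^2}{w} = 4\norm[L^2(\dOm)]{\na v}^2$. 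Applying part (ii) to $v$ with a small parameter $\eta>0$ yields $\norm[L^2(\dOm)]{\na v}^2 \le 2\eta^2\norm[L^2(\Om)]{\Lap v}^2 + 2C(\eta)^2\norm[L^2(\Om)]{v}^2$, and then part (iii) bounds $\norm[L^2(\Om)]{\Lap v}^2$ by $\frac12\norm[L^2(\Om)]{\sqrt w\,\Lap\log w}^2 + \frac14\norm[L^2(\Om)]{w^{-3/2}|\na w|^2}^2$ up to a factor $2$. Since $\io w|D^2\log w|^2 \ge \frac1n\io w|\Lap\log w|^2 = \frac1n\norm[L^2(\Om)]{\sqrt w\,\Lap\log w}^2$ in $\R^3$ and $\norm[L^2(\Om)]{w^{-3/2}|\na w|^2}^2 = \io\frac{|\na w|^4}{w^3}$, choosing $\eta$ small enough makes the resulting multiples of $\io w|D^2\log w|^2$ and $\io\frac{|\na w|^4}{w^3}$ strictly smaller than the $k_0$-terms gained above; the remaining $\norm[L^2(\Om)]{v}^2 = \io w$ contributes exactly the $C\io w$ on the right-hand side of \eqref{eq:lembdrytermvi}, with $k:=k_0/2$ (say).

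The main obstacle I anticipate is the bookkeeping of constants so that both negative terms survive after absorption: one must make sure that the coefficient $\eta$ chosen for part (ii) simultaneously leaves room against both $\io w|D^2\ln w|^2$ and $\io\frac{|\na w|^4}{w^3}$, which is why splitting the $-2\io w|D^2\log w|^2$ from part (iv) into two halves (using (v) on only one of them, keeping the other in reserve for the absorption) is the cleanest route. A minor technical point is that parts (ii), (iii) are stated for $C^2(\Ombar)$ functions, so one should note that $v=\sqrt w\in C^2(\Ombar)$ whenever $w\in C^2(\Ombar)$ is strictly positive on the compact set $\Ombar$, and that $\delny v = \frac{1}{2\sqrt w}\delny w = 0$ on $\dOm$, so all the cited estimates indeed apply to $v$.
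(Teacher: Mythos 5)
Your argument for part (vi) is correct and is essentially the paper's own proof of that part: bound the boundary term via (i), rewrite $\intdom\frac{|\na w|^2}{w}=4\norm[L^2(\dOm)]{\na\sqrt w}^2$, apply (ii) to $\sqrt w$ (noting $\delny\sqrt w=\frac{1}{2\sqrt w}\delny w=0$) with a small parameter, convert $\norm[L^2(\Om)]{\Delta\sqrt w}^2$ via (iii) into multiples of $\io w|\Lap\log w|^2$ and $\io\frac{|\na w|^4}{w^3}$, and absorb these using (iv), (v) and the pointwise estimate $|\Lap\log w|^2\le 3|D^2\log w|^2$. Your bookkeeping (splitting $-2\io w|D^2\log w|^2$ and spending (v) on only one half, keeping the other in reserve for the absorption) is sound, and you even make explicit the trace-of-the-Hessian step that the paper leaves implicit in its ``taking into account iv) and v)''.

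The only shortfall is one of scope: the statement comprises parts (i)--(vi), while you prove only (vi), taking (i)--(v) for granted. In the paper, (i), (iv) and (v) are indeed just quoted from the literature (Mizoguchi--Souplet; Dal Passo--Garcke--Gr\"un; Winkler's chemotaxis-fluid paper), so using them as black boxes is defensible; but (ii) and (iii) carry genuine, if short, proofs that your proposal omits. Part (iii) is the pointwise identity $\Delta\sqrt w=\frac12\sqrt w\,\Delta\log w+\frac14 w^{-\frac32}|\na w|^2$ followed by the triangle inequality, and part (ii) is obtained by combining the trace estimate $\norm[L^2(\dOm)]{\psi}\le C\norm[W^{r+\frac12,2}(\Om)]{\psi}$, interpolation between $W^{2,2}(\Om)$ and $L^2(\Om)$, the elliptic estimate $\norm[W^{2,2}(\Om)]{\psi}\le C\left(\norm[L^2(\Om)]{\Delta\psi}+\norm[L^2(\Om)]{\psi}\right)$ for functions with $\delny\psi=0$, and Young's inequality. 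If your text is meant as a proof of the whole lemma, these two parts still need to be supplied; as a proof of (vi) given (i)--(v), it is complete and coincides with the paper's route.
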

\begin{proof}
 i) This is \cite[Lemma 4.10]{mizoguchi_souplet}.\\
 ii)  Let us fix $r\in(0,\frac12)$. Thanks to the boundedness of the trace operator $tr\colon W^{r+\frac12,2}(\Om)\to W^{r,2}(\dOm)$ (cf. \cite[Thm. 4.24]{TriebelHaroske}) and the embedding $W^{r,2}(\dOm)\embeddedinto L^2(\dOm)$ (see \cite[Prop. 4.22(ii)]{TriebelHaroske}) there is $k_1>0$ such that $\norm[L^2(\dOm)]{\psi}\leq k_1\norm[W^{r+\frac12,2}(\Om)]{\psi}$ for all $\psi\in W^{r+\frac12,2}(\Om)$. If we let $\theta:=\frac34+\frac r2$, the interpolation inequality \cite[Prop. 2.3]{LionsMagenesI} 
 guarantees the existence of $k_2>0$ such that $\norm[W^{r+\frac32,2}(\Om)]{\psi} \leq k_2 \norm[W^{2,2}(\Om)]{\psi}^{\theta}\norm[L^2(\Om)]{\psi}^{1-\theta}$ for all $\psi\in W^{2,2}(\Om)$. Furthermore, according to e.g. \cite[Thm. 19.1]{Friedman}, there is $k_3>0$ such that $\norm[W^{2,2}(\Om)]{\psi}\leq k_3\norm[L^2(\Om)]{\Delta \psi}+k_3\norm[L^2(\Om)]{\psi}$ for all $\psi\in W^{2,2}(\Om)$ with $\delny \psi=0$ on $\dOm$. Moreover, for any $\eta>0$, Young's inequality provides us with $k_4=k_4(\eta)$ such that for any $a,b\in[0,\infty)$ we have $a^\theta b^{1-\theta}\leq \frac{\eta}{k_1k_2k_3} a + k_4(\eta) b$, since the choice of $r$ implies $\theta\in(0,1)$.
 With these constants, for any $w\in W^{2,2}(\Om)$ satisfying $\delny w=0$ on $\dOm$ we obtain 
\begin{align*}
 \norm[L^2(\dOm)]{\na w} \leq& k_1\norm[W^{r+\frac12,2}(\Om)]{\na w}
 \leq k_1\norm[W^{r+\frac32,2}(\Om)]{w}
 \leq k_1k_2 \norm[W^{2,2}(\Om)]{w}^\theta \norm[L^2(\Om)]{w}^{1-\theta} \\
 \leq& k_1k_2k_3 \norm[L^2(\Om)]{\Delta w}^\theta\norm[L^2(\Om)]{w}^{1-\theta} + k_1k_2k_3\norm[L^2(\Om)]{w}^{\theta+1-\theta}
 \leq \eta\norm[L^2(\Om)]{\Delta w}+ (k_1k_2k_3+k_4(\eta))\norm[L^2(\Om)]{w}. 
\end{align*}
iii) Let $w\in C^2(\Ombar)$ be positive. The pointwise equalities 
 \[
  \Delta \sqrt{w}= \na\cdot\left(\frac{1}{2\sqrt{w}}\na w\right) = \frac{\Delta w}{2\sqrt{w}}+\frac12\na w\cdot\na\left(w^{-\frac12}\right)=\frac{\Delta w}{2\sqrt{w}}-\frac{|\na w|^2}{4w^{\frac32}}
 \]
 and 
 \[
  \Delta\log w=\na\cdot(\na\log w)=\na\cdot\left(\frac{\na w}{w}\right)=\frac{\Delta w}{w}-\frac{|\na w|^2}{w^2}
 \]
 immediately entail 
 \[
   \Delta\sqrt{w} = \frac12\sqrt{w}\Delta\log w+ \frac14\frac{|\na w|^2}{w^{\frac32}}
 \]
 and thus \eqref{eq:Deltasqrt}.\\
 iv) Being a special case of assertions from \cite{dPGG}, this can be found as Lemma 3.4 i) in \cite{singular_sensitivity}.\\
 v) This was proven as \cite[Lemma 3.3]{wk_ctfluid}.\\
 vi) Let $\eta>0$. Part i) and Young's inequality in combination with ii) and iii), respectively, can be employed to yield $C>0$ such that 
\begin{align*}
 \intdom \frac1w\delny|\na w|^2 \leq& \mathfrak{K}\intdom \frac{|\na w|^2}{w} = 4\mathfrak{K}\norm[L^2(\dOm)]{\na\sqrt{w}}^2 
\leq \eta\norm[L^2(\Om)]{\Delta\sqrt{w}}^2+C\norm[L^2(\Om)]{\sqrt{w}}^2\\
 \leq& \eta\norm[L^2(\Om)]{\sqrt{w}\Delta \log w}^2 + \eta \norm[L^2(\Om)]{w^{-\frac32}|\na w|^2}^2 + C \norm[L^2(\Om)]{\sqrt{w}}^2
\end{align*}
 for all positive $w\in C^2(\Ombar)$ satisfying $\delny w\amrand=0$.
 Thus, for any such $w$, 
 \[
  \intdom \frac1w\delny|\na w|^2 \leq \eta\io w|\Delta \log w|^2 + \eta \io \frac{|\na w|^4}{w^3} + C \io w.
 \]
 Taking into account iv) and v), we readily obtain \eqref{eq:lembdrytermvi}.
\end{proof}

We can take these estimates to their use in the next proof, which is concerned with the derivatives of the second summand in \eqref{eq:defF}.

\begin{lemma}\label{lem:ddtnacqc}
 There are $K,C,k>0$ such that for every \epscond
 \[
 \ddt \io \frac{|\na \cep|^2}\cep   
+k\io \cep|D^2\ln\cep|^2+k\io\frac{|\na\cep|^4}{\cep^3}
\leq C + K \io|\na \uep|^2 -2\io\frac{\na\cep\cdot\na\nep}{1+\eps\nep} \ontme.  
 \]
\end{lemma}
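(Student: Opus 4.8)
The plan is to differentiate $\io |\na\cep|^2/\cep$ using the $c$-equation \eqref{eq:cep} and organize the resulting terms so that the "bad" $1/\cep$-singularities are absorbed into the good quadratic terms $\io\cep|D^2\ln\cep|^2$ and $\io|\na\cep|^4/\cep^3$ supplied by Lemma \ref{lem:bdryterm}. Writing $g_\eps := \frac1\eps\ln(1+\eps\nep)$ (so $\cept = \Lap\cep - \cep g_\eps - \uep\cdot\na\cep$), I would use the classical pointwise identity
\[
 \tfrac12\ddt \io \frac{|\na\cep|^2}{\cep}
 = \io \frac{\na\cep\cdot\na\cept}{\cep} - \tfrac12\io \frac{|\na\cep|^2 \cept}{\cep^2},
\]
and then substitute $\cept$. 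The term containing $\Lap\cep$ is the heart of the matter: after an integration by parts it produces exactly $-\io\frac{|\Lap\cep|^2}{\cep} + \io\frac{|\na\cep|^2\Lap\cep}{\cep^2}$ together with a boundary term $\io_{\dOm}\frac1\cep\delny\frac{|\na\cep|^2}{2}$ (up to factors), and these are precisely the combinations handled by Lemma \ref{lem:bdryterm}iv)--vi). Applying vi) turns this entire block into $-k\io\cep|D^2\ln\cep|^2 - k\io\frac{|\na\cep|^4}{\cep^3} + C\io\cep$, and $\io\cep$ is bounded (indeed nonincreasing) by Lemma \ref{lem:cbd}, so it contributes a constant $C$.

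Next I would treat the remaining terms. The consumption terms coming from $-\cep g_\eps$ are $-\io\na\cep\cdot\na g_\eps + \io g_\eps\frac{|\na\cep|^2}{2\cep}$ (schematically); since $g_\eps\ge0$ the second has the right sign, and $\na g_\eps = \frac{\na\nep}{1+\eps\nep}$, so $-\io\na\cep\cdot\na g_\eps = -\io\frac{\na\cep\cdot\na\nep}{1+\eps\nep}$, and after accounting for the factor $2$ from the $\tfrac12\ddt$ on the left this reproduces exactly the $-2\io\frac{\na\cep\cdot\na\nep}{1+\eps\nep}$ on the right-hand side of the claim. The convection terms involving $\uep\cdot\na\cep$ need slightly more care: after integration by parts and using $\na\cdot\uep=0$ one gets an expression like $\io\frac{(\na\cep\cdot\na\uep\cdot\na\cep)}{\cep}$ which I would estimate, via a pointwise bound $|D^2\ln\cep|\gtrsim \frac{|\na\cep|^2}{\cep^2}$ (this is the content hidden in Lemma \ref{lem:bdryterm}v)) and Young's inequality, by a small multiple of $\io\cep|D^2\ln\cep|^2 + \io\frac{|\na\cep|^4}{\cep^3}$ plus a constant times $\io|\na\uep|^2$ — the latter being absorbed into the $K\io|\na\uep|^2$ term on the right. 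The constant $K$ is then fixed by the coefficient produced here.

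The main obstacle is bookkeeping: making sure that after all integrations by parts the coefficients of $\io\cep|D^2\ln\cep|^2$ and $\io\frac{|\na\cep|^4}{\cep^3}$ are strictly positive, which requires choosing the small parameter $\eta$ in Lemma \ref{lem:bdryterm}vi) and the Young's-inequality parameters in the convection estimate small enough that the negative contributions from those steps do not overwhelm the $-k$ gained from vi). A secondary technical point is that all these manipulations are justified only for positive $\cep$; since $\cep>0$ on $(0,\Tmaxe)$ by the strong maximum principle (the zero is a strict subsolution of a linear equation once $\nep$ is fixed), and $\cep\in C^{2,1}$, the pointwise identities and integrations by parts are legitimate, though one might phrase the argument on $(t_0,\Tmaxe)$ for $t_0>0$ to stay away from the initial layer. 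Finally, I would remark that the single constant $C$ in the statement then collects $C\io\cep$ (bounded by Lemma \ref{lem:cbd}) and the constants from the Young estimates, all of which are independent of $\eps$.
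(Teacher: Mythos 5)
Your proposal follows essentially the same route as the paper's proof: differentiate $\io\frac{|\na\cep|^2}{\cep}$, substitute \eqref{eq:cep}, feed the resulting block $-2\io\frac{|\Lap\cep|^2}{\cep}+\io\frac{|\na\cep|^2\Lap\cep}{\cep^2}$ into Lemma \ref{lem:bdryterm} vi), obtain $-2\io\frac{\na\cep\cdot\na\nep}{1+\eps\nep}$ by integrating the consumption cross term by parts, drop the remaining consumption term by sign, and absorb the convection term via Young's inequality into $\io\frac{|\na\cep|^4}{\cep^3}$ and $K\io|\na\uep|^2$ — this is precisely the computation \eqref{eq:ddtnacqc1}--\eqref{eq:2.8clnn}. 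Two small repairs: the pointwise bound $|D^2\ln\cep|\gtrsim\frac{|\na\cep|^2}{\cep^2}$ you invoke for the convection term is false (Lemma \ref{lem:bdryterm} v) holds only in integrated form), but it is also unnecessary, since writing $\frac{|\na\cep|^2}{\cep}|\na\uep|=\frac{|\na\cep|^2}{\cep^{3/2}}\cdot\cep^{\frac12}|\na\uep|$ and applying Young's inequality together with $\norm[\Liom]{\cep(t)}\le\norm[\Liom]{c_0}$ from Lemma \ref{lem:cbd} yields a small multiple of $\io\frac{|\na\cep|^4}{\cep^3}$ plus a fixed multiple of $\io|\na\uep|^2$, which is exactly how the paper determines $K$; and in your schematic consumption bookkeeping the quadratic term actually appears as $-\frac12\io g_\eps\frac{|\na\cep|^2}{\cep}$ with $g_\eps=\frac1\eps\ln(1+\eps\nep)\ge0$, i.e.\ with a favourable minus sign — the term $+\io g_\eps\frac{|\na\cep|^2}{2\cep}$ as you wrote it could not simply be discarded, so the sign needs to be corrected (the conclusion you draw is then valid).
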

\begin{proof}
We begin by computing $\ddt \io \frac{|\na \cep|^2}\cep$: For any $\eps>0$ on $(0,\Tmaxe)$ we have 
\begin{align}\label{eq:ddtnacqc1}
 \ddt \io \frac{|\na \cep|^2}\cep =& 2\io \frac{\na \cep•\na \cept}{\cep}-\io\frac{|\na\cep|^2}{\cep^2}\cept\nn\\
 =&-2\io\frac{\Lap\cep\cept}{\cep}+2\io\frac{|\na\cep|^2}{\cep^2}\cept-\io\frac{|\na\cep|^2}{\cep^2}\cept\nn\\
 =&-2\io\frac{\Lap\cep\cept}{\cep}+\io\frac{|\na\cep|^2}{\cep^2}\cept\nn\\
 =&-2\io\frac{|\Lap\cep|^2}{\cep}+2\io\frac{\Lap\cep\frac1\eps\cep\ln(1+\eps \nep)}{\cep} +2\io\frac{\Lap\cep}{\cep}\uep\cdot\na\cep\nn\\
 &+\io\frac{|\na\cep|^2}{\cep^2}\Lap\cep-2\io\frac{|\na\cep|^2}{\cep^2}\cep\frac1\eps \ln(1+\eps \nep)-\io\frac{|\na\cep|^2}{\cep^2}\uep•\na\cep.
\end{align}
From Lemma \ref{lem:bdryterm} vi), we obtain $k_1>0, k_2>0$ such that for any $ε>0$ we may estimate 
\begin{equation}\label{eq:2.8est1}
 -2\io\frac{|\Lap\cep|^2}{\cep}+\io\frac{|\na\cep|^2}{\cep^2}\Lap\cep\leq -k_1\io \cep|D^2\ln\cep|^2-k_1\io\frac{|\na \cep|^4}{\cep^3} + k_2\io \cep \ontme.
\end{equation}
As to the terms containing $\uep$, we note that for all $ε>0$
\begin{align*}
 2\io \frac{\Lap \cep}\cep(\uep\cdot\na\cep) = 2\io\frac{|\na \cep|^2}{\cep^2} \uep\cdot\na\cep - 2\io \frac1{\cep}\na\cep•(\na\uep\na\cep)-2\io\frac{1}{\cep} \uep• D^2\cep\na\cep, 
\end{align*}
and 
\begin{align*}
 \io\frac{|\na\cep|^2}{\cep^2}\uep\cdot\na\cep=-\io\na\left(\frac1{\cep}\right)\cdot \uep|\na \cep|^2 = \io \frac{1}{\cep} \uep\cdot\na|\na\cep|^2=2\io \frac1{\cep}\uep• D^2\cep\na\cep 
\end{align*}
hold on $(0,\Tmaxe)$, 
so that for any $ε>0$ 
\begin{align}
 2\io\frac{\Lap\cep}{\cep}\uep\cdot\na\cep -\io\frac{|\na\cep|^2}{\cep^2}\uep•\na\cep =& \io\frac{|\na \cep|^2}{\cep^2} \uep\cdot\na\cep - 2\io \frac1{\cep}\na\cep•(\na\uep\na\cep)-2\io\frac{1}{\cep} \uep •D^2\cep\na\cep\nn\\
 =&- 2\io \frac1{\cep}\na\cep•(\na\uep\na\cep) \ontme,\label{eq:2.8ue1}
\end{align}
where we can estimate 
\begin{equation}\label{eq:2.8ue2}
 2\io \frac1{\cep}\na\cep•(\na\uep\na\cep)\leq \frac{k_1}2 \io \frac{|\na\cep|^4}{\cep^3} + k_3 \io \cep |\na \uep|^2 \ontme
\end{equation}
with some $k_3>0$ courtesy of Young's inequality.  
Moreover 
\begin{equation}\label{eq:2.8cn}
-2\io\frac{|\na\cep|^2}{\cep^2}\cep\frac1\eps \ln(1+\eps \nep)\leq 0 \qquad \mbox{ on } (0,\Tmaxe) \mbox{ and for all } ε>0
\end{equation}
and an integration by parts shows
\begin{equation}\label{eq:2.8clnn}
 2\io\frac{\Lap\cep\frac1\eps\cep\ln(1+\eps \nep)}{\cep}=-2\io\frac{\na\cep\cdot\na\nep}{1+\eps\nep}\fate
\end{equation}
so that, for any $ε>0$, using \eqref{eq:2.8est1}, \eqref{eq:2.8ue1}, \eqref{eq:2.8ue2}, \eqref{eq:2.8cn}, \eqref{eq:2.8clnn} turns  \eqref{eq:ddtnacqc1} into 
\[
 \ddt \io \frac{|\na \cep|^2}\cep  
+k_1\io \cep|D^2\ln\cep|^2+\frac{k_1}2\io\frac{|\na\cep|^4}{\cep^3}
\leq k_2\io \cep+ k_3\io \cep|\na\uep|^2 -2\io\frac{\na\cep\cdot\na\nep}{1+\eps\nep}
\]
on $(0,\Tmaxe)$ and finally inserting the uniform bound on $\cep$ provided by Lemma \ref{lem:cbd} gives the assertion.
\end{proof}

Finally, we turn our attention to the last term in \eqref{eq:defF}.
\begin{lemma}\label{lem:ddtiou2}
i) There is $C>0$ such that for any $ζ∈ℝ$ and any $ε>0$ we have
\begin{equation}\label{eq:lemddtu-new}
 \ddt \io |\uep|^2+\io |\na \uep|^2 \leq C\io \left(\nep-ζ\right)^2 
 +   C\left(\io f^{\frac65}\right)^{\frac53} \ontme.
\end{equation}
ii) Moreover, for any $\eta>0$ there is $C_{η}>0$ such that for any \epscond, 
\begin{equation}\label{eq:lemddtu}
 \ddt \io |\uep|^2+\io |\na \uep|^2 \leq \eta \io \nep^2\ln\nep 
 +   C\left(\io f^{\frac65}\right)^{\frac53}+C \ontme.
\end{equation}
\end{lemma}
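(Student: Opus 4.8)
The plan is to test the fluid equation \eqref{eq:uep} with $\uep$ in $L^2_\sigma(\Om)$. First I would multiply \eqref{eq:uep} by $\uep$ and integrate over $\Om$; the pressure term drops out because $\na\cdot\uep=0$ and $\uep\bdry=0$, and the convective term $\io (Y_\eps\uep\cdot\na)\uep\cdot\uep$ also vanishes since $\na\cdot(Y_\eps\uep)=0$ (as $Y_\eps=(1+\eps A)^{-1}$ maps into $L^2_\sigma(\Om)$ and commutes with the divergence-free structure) and $\uep\bdry=0$, after integrating by parts. The diffusion term yields $-\io|\na\uep|^2$, so we arrive at
\[
 \tfrac12\ddt\io|\uep|^2 + \io|\na\uep|^2 = \io \nep\,\na\Phi\cdot\uep + \io f\cdot\uep \ontme.
\]
This identity is the backbone; everything else is estimating the two terms on the right.

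For part i), I would handle the buoyancy term by noting that since $\na\cdot\uep=0$ and $\uep\bdry=0$, for any constant $ζ$ we have $\io ζ\,\na\Phi\cdot\uep = -ζ\io (\na\cdot\uep)\Phi = 0$, hence $\io\nep\na\Phi\cdot\uep = \io(\nep-ζ)\na\Phi\cdot\uep$. Then, using $\norm[\Liom]{\na\Phi}<\infty$ (from $\Phi\in C^{1+β}(\Ombar)$), Hölder's inequality with exponents $(2,6,3)$ — writing $\io(\nep-ζ)\na\Phi\cdot\uep \le \norm[\Liom]{\na\Phi}\,\norm[\Lom6]{\uep}\,\norm[\Lom{6/5}]{\nep-ζ}$ — together with the Poincaré/Sobolev inequality $\norm[\Lom6]{\uep}\le C_S\norm[\Lom2]{\na\uep}$ (valid since $\uep\bdry=0$) and then Young's inequality absorbs a small multiple of $\io|\na\uep|^2$ into the left-hand side at the cost of a term $C\norm[\Lom{6/5}]{\nep-ζ}^2 \le C\io(\nep-ζ)^2$ (the last step by Hölder on the bounded domain $\Om$, since $6/5<2$); similarly $\io f\cdot\uep \le \norm[\Lom{6/5}]{f}\,\norm[\Lom6]{\uep}$ is treated by Sobolev and Young, producing a further absorbable gradient term plus $C\norm[\Lom{6/5}]{f}^2 = C(\io f^{6/5})^{5/3}$. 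Multiplying through by $2$ and relabelling constants gives \eqref{eq:lemddtu-new}.

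For part ii), the point is to trade the $\io(\nep-ζ)^2$ control for the entropy-type quantity $\io\nep^2\ln\nep$, which is what appears on the left in Lemma \ref{lem:ddtnlnn} and is needed for the energy inequality for \eqref{eq:defF}. Taking $ζ=0$ in part i), it suffices to show that for every $\eta>0$ there is $C_\eta>0$ with $C\io\nep^2 \le \eta\io\nep^2\ln\nep + C_\eta$. I would split $\Om$ into $\{\nep\le M\}$ and $\{\nep>M\}$ for a suitable $M=M(\eta)$: on $\{\nep\le M\}$, $\io_{\{\nep\le M\}}\nep^2 \le M^2|\Om|$ is just a constant; on $\{\nep>M\}$, once $M$ is chosen large enough that $C\le\eta\ln M$, we get $C\nep^2\le\eta\nep^2\ln\nep$ pointwise. (One should note $\nep^2\ln\nep\ge -e^{-1}$ everywhere, so the integral on the left of Lemma \ref{lem:ddtnlnn} is bounded below; this makes the bookkeeping legitimate.) Substituting into \eqref{eq:lemddtu-new} and absorbing $C_\eta$ into the additive constant yields \eqref{eq:lemddtu}.

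The only mildly delicate points are the vanishing of the nonlinear convective term (which hinges on $Y_\eps\uep$ being divergence-free, a standard property of the Yosida approximation of the Stokes operator) and making sure the Sobolev constant in $\norm[\Lom6]{\uep}\le C_S\norm[\Lom2]{\na\uep}$ is $\eps$-independent — it is, since it depends only on $\Om$ and the Dirichlet condition. The rest is routine Hölder–Sobolev–Young juggling, so I expect no real obstacle here; the lemma is essentially a bookkeeping step preparing the combination of Lemmas \ref{lem:ddtnlnn}, \ref{lem:ddtnacqc} and \ref{lem:ddtiou2} into a differential inequality for \eqref{eq:defF}.
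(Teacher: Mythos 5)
Your proposal is correct and follows essentially the same route as the paper: test \eqref{eq:uep} with $\uep$, use $\na\cdot(\Yep\uep)=0$ and $\na\cdot\uep=0$ to kill the convective, pressure and constant-buoyancy contributions, absorb the remaining terms via Sobolev/Poincar\'e and Young, and for part ii) trade $\io\nep^2$ for $\eta\io\nep^2\ln\nep+C_\eta$ (the paper uses the equivalent inequality $\io\phii^2\le a\io\phii^2\ln\phii+|\Om|e^{1/a}$ rather than your explicit level-set splitting, and estimates the buoyancy term via Poincar\'e in $L^2$ rather than the $L^6$--$L^{6/5}$ pairing, but these are immaterial variations). Only a cosmetic slip: the H\"older exponents for $\io(\nep-ζ)\na\Phi\cdot\uep$ are $(\infty,6,\frac65)$, not $(2,6,3)$, though the displayed estimate you wrote is the correct one.
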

\begin{proof}
If for any $ε>0$ we multiply \eqref{eq:uep} by $\uep$, we obtain 
\begin{align}\nn
 \frac12\ddt \io |\uep|^2 =& \io \uep•\uept=\io \uep•\Delta \uep + \io\uep•\na P - \io (Y_\eps\uep•\na)\uep\cdot \uep + \io \nep\na \Phi• \uep+\io \uep\cdot f\\
 =& -\io |\na \uep|^2 + \io \nep\na\Phi• \uep +\io \uep\cdot f,\label{eq:ddtueq1} \ontme,
\end{align}
where we have used that $\na\cdot\uep=0$ so that for any $ε>0$
\begin{equation*}
 \io (\Yep\uep•\na)\uep\cdot \uep= -\io \na\cdot (\Yep\uep) |\uep|^2-\frac12\io \Yep\uep\cdot \na|\uep|^2=\frac12\io (\na\cdot \Yep\uep)|\uep|^2=0 \ontme.
\end{equation*}
That $\uep$ is divergence-free also shows $\io \nep(t)\na\Phi•\uep(t)=\io \left(\nep(t)-ζ\right)\na\Phi•\uep(t)$ for any $t\in(0,\Tmaxe)$. Young's inequality in combination with Poincar\'e's inequality and the boundedness of $\na\Phi$ enables us to find $k_1>0$ such that 
\begin{equation}\label{eq:ddtueq4}
 \io |(\nep(t)-ζ)\na\Phi•\uep(t)| \leq k_1\io \left(\nep(t)-ζ\right)^2 + \frac14 \io |\na \uep(t)|^2 
\end{equation}
holds for any $t\in(0,\Tmaxe)$ for any \epscond. \\
From the embedding $W_0^{1,2}(\Om)\embeddedinto L^6(\Om)$ we can obtain a constant $k_2>0$ such that 
\begin{align*}
 \norm[L^6(\Om)]{w}\leq& k_2\norm[L^2(\Om)]{\na w} \quad \mbox{for all } w\in W^{1,2}_0(\Om)
\end{align*}
and hence H\"older's and Young's inequalities allow us to estimate 
\begin{align}\label{eq:ddtueq2}
 \io \uep\cdot f &\leq \left(\io|\uep|^6\right)^{\frac16}\left(\io |f|^{\frac65}\right)^{\frac56} \leq k_2\norm[L^2(\Om)]{\na \uep}\left(\io f^{\frac65}\right)^{\frac56}\nn\\
 &\leq \frac14\io |\na \uep|^2 + k_3\left(\io |f|^{\frac65}\right)^{\frac53}\ontme \mbox{ for all } ε>0
\end{align}
with some $k_3>0$. Adding \eqref{eq:ddtueq1}, \eqref{eq:ddtueq2} and \eqref{eq:ddtueq4} results in \eqref{eq:lemddtu-new}. 
If we furthermore use that $\io \phii^2\leq a\io \phii^2\ln\phii+|\Om|e^{\frac1a}$ for any positive function $\phii$ and any $a>0$, for any $\eta>0$ we can find $C_\eta>0$ such that 
\begin{equation}\label{eq:ddtneq3}
 \io |\nep\uep•\na\Phi|\leq \eta\io \nep^2\ln\nep +\frac14\io |\na\uep|^2 + C_\eta \ontme
\end{equation}
holds for any \epscond, thus establishing \eqref{eq:lemddtu}. 
\end{proof}

If we now amalgamate Lemma \ref{lem:ddtnlnn}, Lemma \ref{lem:ddtnacqc} and Lemma \ref{lem:ddtiou2}, we end up with 

\begin{lemma}\label{lem:ddtF}
 There are $C,k_0,K>0$ such that 
\begin{align*}
 \ddt& \left[\io\nep\ln\nep +\frac{\chi}2 \io \frac{|\na \cep|^2}{\cep} + K\chi\io |\uep|^2\right] \\
   &+\frac{\mu}4\io \nep^2\ln\nep + \io \frac{|\na\nep|^2}{\nep} + k_0\io \cep|D^2\ln\cep|^2+k_0\io \frac{|\na\cep|^4}{\cep^3} + k_0\io |\na\uep|^2 
\leq& C\left(\io f^{\frac65}\right)^{\frac53} + C
\end{align*}
on $(0,\Tmaxe)$ for all \epscond.
\end{lemma}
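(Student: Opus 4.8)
The plan is to add the differential inequalities from Lemma~\ref{lem:ddtnlnn}, Lemma~\ref{lem:ddtnacqc} (multiplied by $\frac{\chi}{2}$), and Lemma~\ref{lem:ddtiou2}~ii) (multiplied by $K\chi$ for a $K$ to be fixed), and then absorb the two ``bad'' cross terms $\chi\io\frac{\na\nep\cdot\na\cep}{1+\eps\nep}$ and $-\chi\io\frac{\na\cep\cdot\na\nep}{1+\eps\nep}$ against each other. The point is that Lemma~\ref{lem:ddtnlnn} contributes $+\chi\io\frac{\na\nep\cdot\na\cep}{1+\eps\nep}$ on its right-hand side, while $\frac{\chi}{2}$ times Lemma~\ref{lem:ddtnacqc} contributes $-\chi\io\frac{\na\cep\cdot\na\nep}{1+\eps\nep}$; these are exactly opposite and cancel identically. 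This cancellation is the whole reason the quasi-energy functional \eqref{eq:defF} is the right object, so I would make sure to point it out explicitly even though the computation itself is trivial.

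After the cancellation, the assembled inequality reads, for every $\eps>0$ on $(0,\Tmaxe)$,
\begin{align*}
 \ddt&\left[\io\nep\ln\nep+\frac{\chi}{2}\io\frac{|\na\cep|^2}{\cep}+K\chi\io|\uep|^2\right]
 +\frac{\mu}{2}\io\nep^2\ln\nep+\io\frac{|\na\nep|^2}{\nep}\\
 &\quad+\frac{\chi k}{2}\io\cep|D^2\ln\cep|^2+\frac{\chi k}{2}\io\frac{|\na\cep|^4}{\cep^3}
 +K\chi\io|\na\uep|^2\\
 &\leq\frac{\chi}{2}\,C+K\chi\,\eta\io\nep^2\ln\nep+K\chi\,C\Big(\io f^{\frac65}\Big)^{\frac53}+K\chi\,C
\end{align*}
with the constants $C,k$ coming from the three cited lemmas (I am reusing the letter $C$ loosely; one should rename to avoid collision). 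The only term on the right-hand side that is not already harmless is $K\chi\eta\io\nep^2\ln\nep$: I would first fix $K>0$ arbitrarily (say $K=1$, since $K$ is just the weight on the fluid term and Lemma~\ref{lem:ddtiou2}~ii) holds for every $\eps>0$ with $C_\eta$ depending only on $\eta$), and then choose $\eta=\eta(K)>0$ small enough that $K\chi\eta\le\frac{\mu}{4}$. Then $K\chi\eta\io\nep^2\ln\nep$ is absorbed into $\frac{\mu}{2}\io\nep^2\ln\nep$, leaving $\frac{\mu}{4}\io\nep^2\ln\nep$ on the left; setting $k_0:=\min\{\frac{\chi k}{2},\,K\chi\}$ (and absorbing all the constant and $f$-dependent terms on the right into a single $C$) yields exactly the claimed inequality. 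If $\chi=0$ the statement is essentially trivial, or one replaces $K\chi$ by $K$ throughout; I would remark on this edge case.

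There is genuinely no hard step here — this lemma is purely a bookkeeping assembly of the three preceding lemmas, and the ``main obstacle'' is only notational: keeping track of which constant is which and making sure the absorption of $\io\nep^2\ln\nep$ is done in the right order (choose $K$, then $\eta$, then $k_0$). One small point worth a sentence is that $\io\nep^2\ln\nep$ is not sign-definite (it is negative where $\nep<1$), but this causes no trouble because one always has $\io\nep^2\ln\nep\ge-|\Om|e^{-1}$ from the elementary bound $s^2\ln s\ge -e^{-1}$, so absorbing a fraction of it from right to left is legitimate and only costs an additive constant, which is harmless. I would state this once and then carry out the three-line addition.
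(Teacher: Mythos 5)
There is a genuine gap, and it sits exactly where you declared the step ``trivial bookkeeping''. The statement of Lemma~\ref{lem:ddtnacqc} does not only produce the good cross term $-2\io\frac{\na\cep\cdot\na\nep}{1+\eps\nep}$: its right-hand side also contains the term $K\io|\na\uep|^2$ (this is the only reason a constant named $K$ appears in that lemma at all; it stems from estimating $k_3\io\cep|\na\uep|^2$ via the uniform bound on $\cep$). In your assembled inequality this term has silently disappeared: after multiplying Lemma~\ref{lem:ddtnacqc} by $\frac{\chi}{2}$ you should have $+\frac{\chi}{2}K\io|\na\uep|^2$ on the right, to be absorbed by the dissipation $K\chi\io|\na\uep|^2$ coming from $K\chi$ times Lemma~\ref{lem:ddtiou2}~ii). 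Because you dropped it, you were led to the incorrect conclusion that the weight $K$ in the functional \eqref{eq:defF} ``is just the weight on the fluid term'' and may be fixed arbitrarily, say $K=1$. That choice breaks the proof: if the constant $K$ furnished by Lemma~\ref{lem:ddtnacqc} is large, then with weight $1\cdot\chi$ on the fluid part the net coefficient of $\io|\na\uep|^2$ after absorption is $\chi-\frac{\chi}{2}K<0$, and no positive $k_0$ can be extracted. The correct order of choices is: take $K$ (and $k$) \emph{as given by Lemma~\ref{lem:ddtnacqc}}, use exactly this $K$ as the weight in \eqref{eq:defF}, then set $\eta=\frac{\mu}{4K\chi}$ in Lemma~\ref{lem:ddtiou2}~ii); the surviving coefficient of $\io|\na\uep|^2$ is then $K\chi-\frac{\chi}{2}K=\frac{K\chi}{2}$, which together with the coefficient $\frac{k\chi}{2}$ of the two $\cep$-dissipation terms yields $k_0=\frac{\chi}{2}\min\{K,k\}$ (note also that your $k_0=\min\{\frac{\chi k}{2},K\chi\}$ overstates the $|\na\uep|^2$-coefficient by the same missing absorption).

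The rest of your argument is in line with the paper's proof: the exact cancellation of the two cross terms, the absorption of $\eta\io\nep^2\ln\nep$ into $\frac{\mu}{2}\io\nep^2\ln\nep$ by choosing $\eta$ small after the weight is fixed, and the remark that $\io\nep^2\ln\nep\ge-|\Om|e^{-1}$ so that absorbing a fraction of it is legitimate, are all fine. But as written the proposal does not prove the lemma; you must reinstate the $K\io|\na\uep|^2$ term from Lemma~\ref{lem:ddtnacqc} and let that lemma dictate the value of $K$ in the functional, rather than choosing it freely.
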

\begin{proof}
We fix $K$ and $k$ as in Lemma \ref{lem:ddtnacqc}, apply Lemma \ref{lem:ddtiou2} with $\eta=\frac{\mu}{4 K\chi}$ and add the inequality given by Lemma \ref{lem:ddtnlnn} to the $\frac{\chi}2$-multiple of that from Lemma \ref{lem:ddtnacqc} and $K\chi$ times the inequality from Lemma \ref{lem:ddtiou2} ii). With $k_0:=\frac{χ}2\min\set{K,k}$, Lemma \ref{lem:ddtF} results immediately. 
\end{proof}

We collect the bounds this quasi-energy inequality gives rise to:

\begin{lemma}\label{lem:allthebounds}
There is $C>0$ such that for any $ε>0$ the estimates 
\begin{align}
 &\io \nep(t)\ln\nep(t) + \io \frac{|\na\cep(t)|^2}{\cep(t)} + \io |\uep(t)|^2\leq C \quad \mbox{hold for all } t\in(0,T)\label{eq:boundsLine1}\\
 \intertext{and such that we may estimate }
 &\int_t^{t+τ}\io \nep^2\ln\nep + \int_t^{t+τ}\io \frac{|\na\nep|^2}{\nep} + \int_t^{t+τ}\io \cep|D^2\ln\cep|^2 \leq C\label{eq:boundsLine2}\\
 &\int_t^{t+τ}\io \frac{|\na\cep|^4}{\cep^3} + \int_t^{t+τ}\io |\na \uep|^2
 \leq C\label{eq:boundsLine3}\\
 &\int_t^{t+τ}\io |\na \nep|^{\frac43} + \io |\na\cep|^2+\int_t^{t+τ}\io |\na\cep|^4 +\int_t^{t+τ}\io \nep^2 \leq C\label{eq:boundsLine4}
\end{align}
for any \epscond\ and any $t\in [0,\Tmaxe-τ)$, where $τ=\min\set{1,\frac12\Tmaxe}$.
\end{lemma}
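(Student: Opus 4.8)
The plan is to extract each asserted bound from the differential inequality of Lemma~\ref{lem:ddtF} by combining an ODE--comparison argument on the full bracketed ``energy'' quantity $\mathcal F_\eps(t):=\io\nep\ln\nep+\frac{\chi}2\io\frac{|\na\cep|^2}{\cep}+K\chi\io|\uep|^2$ with the fact that $f\in L^2((0,\infty),L^{6/5}(\Om))$ makes the right-hand side $C(\io f^{6/5})^{5/3}+C$ locally integrable in time with a bound uniform in $t$ (namely $\int_t^{t+1}(\io f^{6/5})^{5/3}\le\|f\|_{L^2((0,\infty),L^{6/5})}^2$ by H\"older in time over the unit interval, since $\tfrac{5}{3}\le 2$). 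First I would note that $\mathcal F_\eps$ is bounded below: $s\ln s\ge -e^{-1}$ gives $\io\nep\ln\nep\ge -|\Om|e^{-1}$, the second term is nonnegative, and the third is nonnegative, so $\mathcal F_\eps\ge -|\Om|e^{-1}$ uniformly in $\eps$ and $t$.

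Next I would bound $\mathcal F_\eps$ from above. The dissipation terms on the left of Lemma~\ref{lem:ddtF} are all nonnegative, but to close an ODE argument of the form $\ddt\mathcal F_\eps\le -\delta\mathcal F_\eps+(\text{integrable})$ I need to dominate $\mathcal F_\eps$ itself by the dissipation plus a constant. For the $n$-part: $\io\nep\ln\nep\le\io\nep^2\ln\nep+C$ is false without care about the region $\nep<1$, but there $\nep\ln\nep$ is bounded, and on $\{\nep\ge 1\}$ one has $\nep\ln\nep\le\nep^2\ln\nep$; combined with the $L^1$-bound $\io\nep\le C$ from Lemma~\ref{lem:nleq}, one gets $\io\nep\ln\nep\le\frac{\mu}{4}\io\nep^2\ln\nep+C$ after absorbing. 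For the $c$-part: $\io\frac{|\na\cep|^2}{\cep}=4\io|\na\sqrt{\cep}|^2$, and using the Poincar\'e-type/Gagliardo--Nirenberg control of $\io|\na\sqrt{\cep}|^2$ by $\io|D^2\sqrt{\cep}|$-type quantities together with Lemma~\ref{lem:bdryterm} and the uniform $L^\infty$-bound on $\cep$ from Lemma~\ref{lem:cbd}, this term is dominated by $k_0\io\cep|D^2\ln\cep|^2+k_0\io\frac{|\na\cep|^4}{\cep^3}$ plus a constant. For the $u$-part: $\io|\uep|^2\le C_P\io|\na\uep|^2$ by Poincar\'e. Hence $\ddt\mathcal F_\eps+\delta\mathcal F_\eps\le C(\io f^{6/5})^{5/3}+C$ for some $\delta>0$, and since $\mathcal F_\eps(0)$ is controlled by the fixed initial data (with $\io n_0\ln n_0<\infty$ as $n_0\in C^0(\Ombar)$, $\io\frac{|\na c_0|^2}{c_0}<\infty$ using $c_0\in W^{1,q}(\Om)$ with $q>3$ and $c_0>0$ on $\Ombar$, $\io|u_0|^2<\infty$), Gronwall gives \eqref{eq:boundsLine1} uniformly in $\eps$ and $t\in(0,T)$ — indeed on all of $(0,\Tmaxe)$.

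Having the uniform upper bound $\mathcal F_\eps(t)\le C$, I would integrate the inequality of Lemma~\ref{lem:ddtF} over $[t,t+\tau]$: the left-hand-side integral of $\ddt\mathcal F_\eps$ telescopes to $\mathcal F_\eps(t+\tau)-\mathcal F_\eps(t)\ge -C-C=-2C$ by the two-sided bounds just obtained, so $\int_t^{t+\tau}\big[\frac{\mu}{4}\io\nep^2\ln\nep+\io\frac{|\na\nep|^2}{\nep}+k_0\io\cep|D^2\ln\cep|^2+k_0\io\frac{|\na\cep|^4}{\cep^3}+k_0\io|\na\uep|^2\big]\le 2C+C\int_t^{t+\tau}(\io f^{6/5})^{5/3}+C\tau\le C'$, which is precisely \eqref{eq:boundsLine2} and \eqref{eq:boundsLine3} (the $\io\nep^2\ln\nep$ term lower-bounded by $-|\Om|e^{-1}$ makes $\int_t^{t+\tau}\io\nep^2\ln\nep\le C$ genuine). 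Finally \eqref{eq:boundsLine4}: $\io|\na\cep|^2$ is already in \eqref{eq:intnac} of Lemma~\ref{lem:intnac2}; $\int_t^{t+\tau}\io\nep^2\le C$ is Lemma~\ref{lem:nleq}; for $\int_t^{t+\tau}\io|\na\nep|^{4/3}$ I would use H\"older $\io|\na\nep|^{4/3}=\io\big(\frac{|\na\nep|^2}{\nep}\big)^{2/3}\nep^{2/3}\le(\io\frac{|\na\nep|^2}{\nep})^{2/3}(\io\nep^2)^{1/3}$, then integrate in time with another H\"older ($\frac23+\frac13=1$) against \eqref{eq:boundsLine2} and Lemma~\ref{lem:nleq}; and for $\int_t^{t+\tau}\io|\na\cep|^4$ I would write $\io|\na\cep|^4=\io\frac{|\na\cep|^4}{\cep^3}\cep^3\le\|\cep\|_{L^\infty}^3\io\frac{|\na\cep|^4}{\cep^3}$ and integrate using \eqref{eq:boundsLine3} together with Lemma~\ref{lem:cbd}.

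The main obstacle is the coercivity step: showing $\mathcal F_\eps$ itself is absorbed by the dissipation, in particular controlling $\io\frac{|\na\cep|^2}{\cep}$ by $\io\cep|D^2\ln\cep|^2+\io\frac{|\na\cep|^4}{\cep^3}$ up to constants. This requires a careful use of the identities and inequalities in Lemma~\ref{lem:bdryterm} (essentially that $\|\Delta\sqrt{\cep}\|_{L^2}^2$ controls $\|\na\sqrt{\cep}\|_{L^2}^2$ modulo $\|\sqrt{\cep}\|_{L^2}^2$ by elliptic/trace estimates, and $\|\Delta\sqrt{\cep}\|_{L^2}^2$ in turn is bounded via \eqref{eq:Deltasqrt} by the two dissipation integrals) plus the uniform $L^\infty$-bound on $\cep$ from Lemma~\ref{lem:cbd}; everything else is an ODE comparison and routine H\"older/Young bookkeeping. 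An alternative, cleaner route avoiding the coercivity of the $c$-term is to use the already-proven \eqref{eq:intnac} to handle the spatial part of $\io\frac{|\na\cep|^2}{\cep}$ differently — but since Lemma~\ref{lem:intnac2} only gives a \emph{time-integral} bound, one still needs the pointwise-in-time estimate \eqref{eq:boundsLine1}, so the coercivity argument above seems unavoidable.
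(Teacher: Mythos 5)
Your proposal follows the paper's strategy essentially step for step: an ODI of the form $\calF_\eps'+\delta\calF_\eps\le C$ obtained from Lemma \ref{lem:ddtF} by absorbing $\calF_\eps$ into the dissipation, ODE comparison/Gronwall for \eqref{eq:boundsLine1}, integration of Lemma \ref{lem:ddtF} over $[t,t+\tau]$ (using the lower bound $\io\nep^2\ln\nep\ge-\frac{|\Om|}{2e}$ to separate the summands) for \eqref{eq:boundsLine2}--\eqref{eq:boundsLine3}, and H\"older/Young bookkeeping together with Lemma \ref{lem:nleq} and Lemma \ref{lem:cbd} for \eqref{eq:boundsLine4}. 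The one point where you genuinely deviate is the step you call the ``main obstacle'': dominating $\io\frac{|\na\cep|^2}{\cep}$ by the dissipation. The paper needs neither Lemma \ref{lem:bdryterm} nor the $\io\cep|D^2\ln\cep|^2$ term here: pointwise Young gives $\frac{|\na\cep|^2}{\cep}=\frac{|\na\cep|^2}{\cep^{3/2}}\cdot\cep^{1/2}\le\frac12\frac{|\na\cep|^4}{\cep^3}+\frac12\cep$, so that $\io\frac{|\na\cep|^2}{\cep}$ is controlled by $\io\frac{|\na\cep|^4}{\cep^3}$ plus a constant, using only the $\Liom$-bound of Lemma \ref{lem:cbd}. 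Your heavier route via Lemma \ref{lem:bdryterm} iii) (together with $\io|\na\sqrt{\cep}|^2=-\io\sqrt{\cep}\,\Delta\sqrt{\cep}$ and $|\Delta w|^2\le 3|D^2w|^2$) does work, since the dissipation contains $k_0\io\cep|D^2\ln\cep|^2$, but it is unnecessary machinery, and your closing claim that this elaborate coercivity argument ``seems unavoidable'' is not accurate --- only the elementary absorption is needed.

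One small but genuine slip: the term $\io|\na\cep|^2$ in \eqref{eq:boundsLine4} carries no time integral, i.e.\ it is a bound on $\io|\na\cep(t)|^2$ for every $t$, whereas \eqref{eq:intnac} in Lemma \ref{lem:intnac2}, which you cite for it, only provides the space--time bound $\int_0^{\Tmaxe}\io|\na\cep|^2\le C$. The repair is the same one-line estimate you already use for $\int_t^{t+\tau}\io|\na\cep|^4$, namely $\io|\na\cep(t)|^2\le\norm[\Liom]{\cep(t)}\io\frac{|\na\cep(t)|^2}{\cep(t)}\le\norm[\Liom]{c_0}\io\frac{|\na\cep(t)|^2}{\cep(t)}$, which is bounded by \eqref{eq:boundsLine1}; this is exactly how the paper argues. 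The remaining ingredients of your argument (lower bound on $\calF_\eps$, the absorption of $\io\nep\ln\nep$ into $\frac{\mu}{4}\io\nep^2\ln\nep$, Poincar\'e for the $u$-part, finiteness of $\calF_\eps(0)$, the treatment of the $f$-term via $(\io f^{6/5})^{5/3}=\norm[\Lom{\frac65}]{f}^2$ and \eqref{reg:f}, and the H\"older/Young steps for $|\na\nep|^{4/3}$ and $|\na\cep|^4$) are sound and coincide with the paper's.
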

\begin{proof}
We let $\calF_{ε}(t):=\io\nep\ln\nep +\frac{\chi}2 \io \frac{|\na \cep|^2}{\cep} + K\chi\io |\uep|^2$, note that each of the summands is bounded from below, and that $s\ln s\leq \frac1{2e}+s^2\ln s$ for any $s>0$, $\io\frac{|\na\cep(t)|^2}{\cep(t)}\leq \norm[\Lom{\infty}]{\cep(t)}\io \frac{|\na\cep(t)|^4}{\cep^3(t)} + |\Om| $ for any $t\in(0,\Tmaxe)$ and that there is $C_p>0$ such that $\io |\uep|^2\leq C_p\io|\na\uep(t)|^2$ for any $t\in(0,\Tmaxe)$. Hence (and by \eqref{reg:f}), $\calF_{\eps}$ satisfies an ODI of the form $\calF'+k_1\calF\leq k_2$ and we may conlude the validity of \eqref{eq:boundsLine1}. The 
 estimates in \eqref{eq:boundsLine2} and \eqref{eq:boundsLine3} then directly result from Lemma \ref{lem:ddtF} upon integration. For \eqref{eq:boundsLine4}, we observe that the bound on $\int_t^{t+τ}\io \nep^2$ results from Lemma \ref{lem:nleq}, and that by Young's inequality 
\[
 \int_t^{t+τ}\io |\na\nep|^{\frac43}=\int_t^{t+τ}\io \frac{|\na\nep|^{\frac43}}{\nep^{\frac23}}\nep^{\frac23} \leq \frac23\int_t^{t+τ}\io \frac{|\na\nep|^2}\nep+\frac13\int_t^{t+τ}\io\nep^2.
\]
Furthermore, by Lemma \ref{lem:cbd} for any $ε>0$
\[
 \io|\na\cep(t)|^2 \leq \norm[L^\infty(\Om)]{\cep(t)} \io \frac{|\na\cep(t)|^2}{\cep(t)} \leq \norm[L^\infty(\Om)]{c_0} \io \frac{|\na\cep(t)|^2}{\cep(t)} \quad \mbox{for any } t\in(0,\Tmaxe),
\]
which is bounded due to \eqref{eq:boundsLine1}. The integral $\int_t^{t+τ}\io |\na\cep|^4$ can be treated similarly, invoking \eqref{eq:boundsLine3}. 
\end{proof}

A first consequence of these bounds is that the approximate solutions are global and we may a posteriori ignore any condition of the type $t<\Tmaxe$ in the previous lemmata.
\begin{lemma}\label{lem:epglobal}
 For any $ε>0$, $\Tmaxe=\infty$. 
\end{lemma}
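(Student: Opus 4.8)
The plan is to argue by contradiction: suppose $\Tmaxe<\infty$ for some fixed $\eps>0$. Then by the continuation criterion \eqref{eq:continuationcrit} in Lemma \ref{lem:locex}, at least one of the quantities $\norm[\Lom\infty]{\nep(\cdot,t)}$, $\norm[W^{1,q}(\Om)]{\cep(\cdot,t)}$, $\norm[\Lom2]{A^\alpha\uep(\cdot,t)}$ must blow up as $t\upto\Tmaxe$. The strategy is to rule this out by using the a priori bounds from Lemma \ref{lem:allthebounds} (which, importantly, hold on $(0,\Tmaxe)$ with constants independent of how close we are to $\Tmaxe$) together with standard smoothing estimates for the heat and Stokes semigroups, bootstrapping in the usual order $u \rightsquigarrow c \rightsquigarrow n$.

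First I would bound $\uep$: starting from the mild (Duhamel) representation for \eqref{eq:uep} via the Stokes semigroup $e^{-tA}$ on $L^2_\sigma(\Om)$, and using that $\norm[\Lom2]{\nep(t)\na\Phi}$ is controlled (the bound $\int_t^{t+\tau}\io\nep^2\le C$ from \eqref{eq:boundsLine4} together with, say, parabolic smoothing, or more simply just by noting $\io\nep\ln\nep$ is bounded so $\nep$ lies in a bounded set of $L^1\log L$ and hence, after a short time, its $L^2$-norm is integrable) and that $f\in L^\infty(\Om\times(0,\infty))$, the standard estimate $\norm[\Lom2]{A^\alpha e^{-tA}v}\le C t^{-\alpha}\norm[\Lom2]{v}$ together with the convolution/Gronwall argument of \cite[Lemma 2.1]{wk_ctfluid3dnastoexist} yields $\sup_{t\in(0,\Tmaxe)}\norm[\Lom2]{A^\alpha\uep(t)}<\infty$; in particular $D(A^\alpha)\embeddedinto L^\infty(\Om)$ (as $\alpha>\tfrac34$) gives $\uep\in L^\infty(\Om\times(0,\Tmaxe))$. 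Next I would bound $\cep$ in $W^{1,q}(\Om)$: from \eqref{eq:cep}, writing it via the Neumann heat semigroup $e^{t\Delta}$ and using $\norm[\Liom]{\cep}\le\norm[\Liom]{c_0}$ (Lemma \ref{lem:cbd}), $\norm[\Liom]{\uep}<\infty$, and the nonnegativity of the zero-order term $\tfrac1\eps\ln(1+\eps\nep)\ge 0$ (so that term only helps), the smoothing estimate $\norm[W^{1,q}(\Om)]{e^{t\Delta}v}\le C(1+t^{-\frac12})\norm[\Liom]{v}$ applied to the forcing terms bounds $\norm[W^{1,q}(\Om)]{\cep(t)}$ uniformly on $(0,\Tmaxe)$. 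Finally, $\nep$ in $L^\infty$: this is the delicate one — from \eqref{eq:nep} one has the semigroup representation with a chemotactic term $-\chi\na\cdot(\tfrac{\nep}{1+\eps\nep}\na\cep)$, but here the $\eps$-regularization is crucial, since $\tfrac{\nep}{1+\eps\nep}\le\tfrac1\eps$ is bounded; combined with the $W^{1,q}$-bound on $\cep$ (hence $\na\cep\in L^q$ with $q>3$) and the logistic term $\kappa\nep-\mu\nep^2\le\tfrac{\kappa^2}{4\mu}$ being bounded above, a Moser-type iteration or the standard $L^p$–$L^\infty$ semigroup argument (using $\norm[\Liom]{e^{t\Delta}\na\cdot v}\le C t^{-\frac12-\frac{3}{2p}}\norm[\Lom p]{v}$) yields $\sup_{t\in(0,\Tmaxe)}\norm[\Liom]{\nep(t)}<\infty$, starting from the $L^1$-bound of Lemma \ref{lem:nleq}.

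Having all three quantities in \eqref{eq:continuationcrit} bounded on $(0,\Tmaxe)$ contradicts the assumption $\Tmaxe<\infty$, so $\Tmaxe=\infty$, as claimed. I expect the main obstacle to be the $L^\infty$-bound on $\nep$: one must be careful to exploit the $\eps$-dependent truncation $\tfrac{\nep}{1+\eps\nep}$ (the bounds here are allowed to depend on $\eps$, which is legitimate since $\eps>0$ is fixed) and feed in the just-obtained $W^{1,q}$-regularity of $\cep$ in the right order; the logistic absorption term can be used or simply estimated from above. Since, as the paper explicitly says, this reasoning ``follows the well-established lines'' of \cite[Lemma 2.1, Lemma 2.2]{wk_ctfluid3dnastoexist} and \cite[Lemma 3.1]{wk_ksns_logsource}, I would present it briefly, citing those sources for the routine semigroup estimates and Gronwall/convolution lemmata, and emphasize only the two points where the present setting differs: the role of the $\eps$-regularizations and the use of the uniform-in-$t$ bounds from Lemma \ref{lem:allthebounds} and Lemma \ref{lem:nleq} in place of mass conservation.
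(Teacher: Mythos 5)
Your overall strategy coincides with the paper's: assume $\Tmaxe<\infty$, show that all three quantities in \eqref{eq:continuationcrit} remain bounded on $(0,\Tmaxe)$, and derive a contradiction. The problem is the very first link of your chain $u\rightsquigarrow c\rightsquigarrow n$. You propose to bound $\sup_{t<\Tmaxe}\norm[\Lom2]{A^\alpha\uep(t)}$ directly from the Duhamel representation using $\norm[\Lom2]{A^\alpha e^{-tA}v}\le Ct^{-\alpha}\norm[\Lom2]{v}$. Since $\alpha>\frac34$, the kernel $(t-s)^{-\alpha}$ belongs to $L^p$ in time only for $p<\frac43$, so by H\"older the forcing terms would have to be controlled in $L^{p'}_tL^2_x$ with $p'>4$. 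What is actually available at this stage is only $\nep\in L^2((0,\Tmaxe),\Lom2)$ (from Lemma \ref{lem:nleq}/Lemma \ref{lem:allthebounds}) and $\sup_t\io\nep\ln\nep\le C$; your claim that the $L\log L$ bound makes $\norm[\Lom2]{\nep(\cdot,s)}$ integrable in time is unfounded, and even the genuine $L^2_tL^2_x$ bound is insufficient against $(t-s)^{-\alpha}$ with $\alpha>\frac34$. The same obstruction hits the convective term, for which $\norm[\Lom2]{(\Yep\uep\cdot\na)\uep}\le C(\eps)\norm[\Lom2]{\na\uep}$ is again only square integrable in time; moreover \cite[Lemma 2.1]{wk_ctfluid3dnastoexist} is the local existence lemma, whose fixed-point argument lives on a short time interval and does not furnish the uniform-in-$(0,\Tmaxe)$ bound you invoke. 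So, as written, the $u$-bound does not close, and everything downstream depends on it.

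The paper's proof repairs exactly this point by inserting energy-type intermediate steps before any semigroup smoothing: assuming $\Tmaxe<\infty$, Lemma \ref{lem:allthebounds} gives $\int_0^{\Tmaxe}\io|\na\cep|^4\le C$ and $\sup_t\io|\uep|^2\le C$, and then, following \cite[Lemma 3.9]{wk_ctfluid3dnastoexist}, one derives differential inequalities for $\io\nep^4$ (exploiting the space-time $L^4$ bound on $\na\cep$) and for $\io|A^{\frac12}\uep|^2$ (testing the projected fluid equation with $A\uep$, where the Yosida regularization $\Yep$ renders the convective term harmless with $\eps$-dependent constants and $\nep$ enters only through time-integrable quantities). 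Only with these $L^\infty((0,\Tmaxe),\Lom4)$ and $L^\infty((0,\Tmaxe),W^{1,2}(\Om))$ bounds in hand do the Stokes and Neumann heat semigroup estimates yield finiteness of $\norm[L^\infty(\Om\times(0,\Tmaxe))]{\uep}$, $\norm[L^\infty((0,\Tmaxe),\Lom4)]{\na\cep}$ and $\norm[L^\infty(\Om\times(0,\Tmaxe))]{\nep}$, contradicting \eqref{eq:continuationcrit}. Your later steps are essentially viable once such intermediate bounds are secured, though note two smaller inaccuracies: in the $c$-step the forcing $\uep\cdot\na\cep$ contains the unknown $\na\cep$, so you need a singular-Gronwall loop rather than a one-shot smoothing estimate, and the term $\cep\frac1\eps\ln(1+\eps\nep)$ is controlled not by its sign but, e.g., via $\ln(1+\eps\nep)\le C_\delta(\eps\nep)^\delta$ together with the $L^1$ bound on $\nep$; the final $n$-step using $\frac{\nep}{1+\eps\nep}\le\frac1\eps$ is fine.
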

\begin{proof}
 Under the assumption that $\Tmaxe<\infty$, for any $ε>0$, Lemma \ref{lem:allthebounds} would provide us with $C>0$ such that 
\[
 \int_0^{\Tmaxe}\!\!\io |\na\cep|^4\leq C \quad \mbox{and} \quad \io |\uep|^2 \leq C \quad \mbox{on } (0,\Tmaxe). 
\]
With this as starting point, we could follow the reasoning of \cite[Lemma 3.9]{wk_ctfluid3dnastoexist} to derive a contradiction to \eqref{eq:continuationcrit}.
There differential inequalities for  $\io \nep^4$ and $\io |A^{\frac12}\uep|^2$ first yielded bounds for these quantities on $[0,\Tmaxe)$, then smoothing estimates for the Stokes semigroup (if combined with an embedding for the domains of fractional powers of $A$) and for the Neumann heat semigroup led to estimates for $\norm[L^{∞}(\Om\times(0,\Tmaxe))]{\uep}$, $\norm[L^{∞}((0,\Tmaxe),\Lom4)]{\na\cep}$ and $\norm[L^{∞}(\Om\times(0,\Tmaxe))]{\nep}$.
\end{proof}

\subsection{Time regularity}

In preparation of an Aubin-Lions type compactness argument, we intend to supplement Lemma \ref{lem:allthebounds} with bounds on time-derivatives. This will be the purpose of the following three lemmata. 

\begin{lemma}\label{lem:nept}
For any $T>0$ there is $C>0$ such that for every \epscond, 
\[
 \norm[L^1((0,T),(W_0^{2,4}(\Om))^*)]{\nept}\leq C.
\]
\end{lemma}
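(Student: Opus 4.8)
The plan is to test the equation \eqref{eq:nep} against an arbitrary $\varphi\in L^\infty((0,T),W_0^{2,4}(\Om))$ and bound each resulting term in $L^1$ in time, using the bounds already collected in Lemma \ref{lem:allthebounds} (together with Lemma \ref{lem:cbd}, Lemma \ref{lem:nleq} and \eqref{reg:f}). Writing \eqref{eq:nep} in divergence form,
\[
 \nept = \Delta\nep - \na\cdot\Bigl(\tfrac{\nep}{1+\eps\nep}\na\cep + \nep\uep\Bigr) + \kappa\nep - \mu\nep^2,
\]
one has for a.e.\ $t\in(0,T)$
\[
 \Bigl|\io \nept(\cdot,t)\,\varphi\Bigr| \leq \io|\na\nep(\cdot,t)|\,|\na\varphi| + \io\tfrac{\nep}{1+\eps\nep}|\na\cep(\cdot,t)|\,|\na\varphi| + \io\nep|\uep(\cdot,t)|\,|\na\varphi| + \kappa\io\nep|\varphi| + \mu\io\nep^2|\varphi|,
\]
and then, using $W_0^{2,4}(\Om)\embeddedinto W^{1,\infty}(\Om)$ in three dimensions (so $\|\na\varphi\|_{L^\infty(\Om)} + \|\varphi\|_{L^\infty(\Om)}\leq c_\Om\|\varphi\|_{W^{2,4}(\Om)}$), each term is estimated by $\|\varphi\|_{W^{2,4}(\Om)}$ times an $L^1((0,T))$ quantity in $t$.

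The four contributions are controlled as follows. The diffusion term: $\int_0^T\io|\na\nep|\leq|\Om|^{3/4}T^{3/4}\bigl(\int_0^T\io|\na\nep|^{4/3}\bigr)^{3/4}$, which is finite by \eqref{eq:boundsLine4} (applied over finitely many unit-length subintervals of $(0,T)$). The taxis term: since $\tfrac{\nep}{1+\eps\nep}\leq\tfrac1\eps$ is not uniform, instead bound $\tfrac{\nep}{1+\eps\nep}\leq\nep$ and estimate $\int_0^T\io\nep|\na\cep|\leq\bigl(\int_0^T\io\nep^2\bigr)^{1/2}\bigl(\int_0^T\io|\na\cep|^2\bigr)^{1/2}$, finite by the $\int_t^{t+\tau}\io\nep^2$ and $\io|\na\cep|^2$ bounds in \eqref{eq:boundsLine4}. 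The convective term: $\int_0^T\io\nep|\uep|\leq\bigl(\int_0^T\io\nep^2\bigr)^{1/2}\bigl(\int_0^T\io|\uep|^2\bigr)^{1/2}$, finite since $\io|\uep|^2$ is uniformly bounded by \eqref{eq:boundsLine1} and $\int_0^T\io\nep^2$ is finite by \eqref{eq:boundsLine4}. Finally the reaction terms $\kappa\int_0^T\io\nep$ and $\mu\int_0^T\io\nep^2$ are finite by Lemma \ref{lem:nleq} (and again \eqref{eq:boundsLine4}). Collecting, $\bigl|\io\nept(\cdot,t)\varphi\bigr|\leq g_\eps(t)\|\varphi(\cdot,t)\|_{W^{2,4}(\Om)}$ with $\int_0^T g_\eps\leq C$ independent of $\eps$, whence $\|\nept\|_{L^1((0,T),(W_0^{2,4}(\Om))^*)}\leq C$ by duality.

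The only genuine subtlety, and the step I would be most careful about, is the taxis term: one must \emph{not} use the crude pointwise bound $\tfrac{\nep}{1+\eps\nep}\le\tfrac1\eps$, which destroys $\eps$-uniformity; the fix is precisely $\tfrac{\nep}{1+\eps\nep}\le\nep$ followed by a Cauchy--Schwarz split into the two space-time $L^2$-bounds of \eqref{eq:boundsLine4}. The remaining ingredient worth spelling out once is the three-dimensional Sobolev embedding $W^{2,4}(\Om)\embeddedinto C^1(\Ombar)$ used to pass from $\na\varphi$, $\varphi$ in $L^\infty$ to the $W^{2,4}$-norm; everything else is Hölder's inequality plus the a priori bounds.
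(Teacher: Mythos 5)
Your proposal is correct and follows essentially the same route as the paper: test \eqref{eq:nep} against $\varphi\in L^\infty((0,T),W_0^{2,4}(\Om))$, use the embedding of $W^{2,4}(\Om)$ into $W^{1,4}(\Om)$ resp. $W^{1,\infty}(\Om)$, bound $\frac{\nep}{1+\eps\nep}\le\nep$, and control each term via Hölder together with the $\eps$-uniform bounds of Lemma \ref{lem:allthebounds} and Lemma \ref{lem:nleq}, concluding by duality. (Only a harmless slip: in the diffusion term the Hölder constant is $(|\Om|T)^{1/4}$, not $|\Om|^{3/4}T^{3/4}$.)
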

\begin{proof} Let $T>0$.
We recall that $\left(L^1((0,T),(W_0^{2,4}(\Om))^*)\right)^*=L^\infty((0,T),W_0^{2,4}(\Om))$ and that hence
\[
 \norm[L^1((0,T),(W_0^{2,4}(\Om))^*)]{w} = \sup\setl{\intnT\io w\phii;\; \phii\in L^\infty((0,T),W_0^{2,4}(\Om)), \norm[L^\infty((0,T),W_0^{2,4}(\Om))]{\phii}\leq 1},
\]
and introduce $k_1>0$ such that 
\begin{align*}
 \max&\setl{\norm[\LT4{W^{1,4}(\Om)}]{φ},\norm[\LT2{\Lom2}]{φ},\norm[\LT2{W^{1,2}(\Om)}]{φ},
 \norm[\LT{∞}{W^{1,∞}(\Om)}]{φ} }\\
&\qquad \qquad \leq k_1\norm[L^\infty((0,T),W_0^{2,4}(\Om))]{\phii} \qquad \mbox{for all } \phii\in L^\infty((0,T),W_0^{2,4}(\Om)), 
\end{align*}
which is guaranteed to exist by the continuous embeddings of $W^{2,4}(\Om)$ into $W^{1,4}(\Om)$, $\Lom2$, and $W^{1,∞}(\Om)$.
We then pick an arbitrary $\phii\in L^\infty((0,T),W_0^{2,4}(\Om))$ having norm $\norm[L^\infty((0,T),W_0^{2,4}(\Om))]{φ}\leq 1$ and test \eqref{eq:nep} by $\phii$, so that we obtain 
\begin{align*}
 \intnT\io \nept\phii=&\intnT\io \na\nep \cdot \na\phii - \chi\intnT\io \frac{\nep}{1+\eps\nep} \na\cep\cdot\na\phii \\
&+ \kappa\intnT\io\nep\phii - \mu\intnT\io \nep^2\phii -\intnT\io \nep\uep•\na\phii\\
\leq&\norm[\LT4{\Lom4}]{\na\phii}\left(\intnT\io|\na\nep|^{\frac43}\right)^{\frac34} \\
&+ \chi\norm[\LT2{\Lom2}]{\na\phii}\left(\intnT\io\nep^2\right)^{\frac12} \sup_{t\in(0,T)}\left(\io|\na\cep (t)|^2\right)^{\frac12}\\&
+\kappa\norm[\LT2{\Lom2}]{\phii}\left(\intnT\io\nep^2\right)^{\frac12}+\mu\norm[\LT{\infty}{\Lom\infty}]{\phii}\intnT\io \nep^2 \\
&+ \norm[\LT{\infty}{\Lom\infty}]{\na\phii}\left(\intnT\io |\uep|^2\intnT\io\nep^2\right)^{\frac12} \qquad \mbox{ for all } ε>0.
\end{align*}
If we let $C$ be as in Lemma \ref{lem:allthebounds}, we obtain
\[
  \intnT\io \nept\phii \leq k_1 (CT)^{\frac34}+χk_1(CT)^{\frac12}C^{\frac12}+κk_1C^{\frac12}+μk_1CT+k_1CT \qquad \mbox{ for all } ε>0
\]
and thus conclude the proof.
\end{proof}

We continue with a similar statement concerning the second component of the solution. 
\begin{lemma}\label{lem:cept}
For all $T>0$ there is $C>0$ such that 
\[
 \norm[L^2((0,T),(W^{1,2}_0(\Om))^*)]{\cept}\leq C
\]
for all $ε>0$.
\end{lemma}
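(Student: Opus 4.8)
The plan is to imitate the argument of Lemma~\ref{lem:nept}: estimate the dual norm $\norm[(W^{1,2}_0(\Om))^*]{\cept(t)}$ pointwise in time by a quantity that is square-integrable over $(0,T)$ with an $\eps$-independent bound, and then integrate. (Since $\Tmaxe=\infty$ by Lemma~\ref{lem:epglobal}, the interval $(0,T)$ causes no difficulty.) Fix $T>0$ and recall that
\[
 \norm[(W^{1,2}_0(\Om))^*]{\cept(t)}=\sup\setl{\io\cept(t)\,\phii\ ;\ \phii\in W^{1,2}_0(\Om),\ \norm[W^{1,2}(\Om)]{\phii}\le 1}.
\]

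First I would test \eqref{eq:cep} with such a $\phii$ and integrate by parts, using $\delny\cep=0$ on $\rand$ in the diffusion term and $\na\cdot\uep=0$ together with $\phii\amrand=0$ in the transport term, to obtain
\[
 \io\cept\phii=-\io\na\cep\cdot\na\phii-\io\cep\,\frac{1}{\eps}\ln(1+\eps\nep)\,\phii+\io\cep\,\uep\cdot\na\phii.
\]
The first term is bounded by $\norm[\Lom2]{\na\cep}\norm[\Lom2]{\na\phii}$ and the third by $\norm[\Liom]{\cep}\norm[\Lom2]{\uep}\norm[\Lom2]{\na\phii}$. For the zeroth-order term I would use $\frac{1}{\eps}\ln(1+\eps\nep)\le\nep$, the boundedness of $\cep$ from Lemma~\ref{lem:cbd}, Hölder's inequality and the Sobolev embedding $W^{1,2}(\Om)\embeddedinto\Lom6$ to get $\io\cep\frac{1}{\eps}\ln(1+\eps\nep)\phii\le\norm[\Liom]{\cep}\norm[\Lom{\frac65}]{\nep}\norm[\Lom6]{\phii}\le c_1\norm[\Liom]{c_0}\norm[\Lom2]{\nep}\norm[W^{1,2}(\Om)]{\phii}$, with $c_1$ absorbing $|\Om|^{1/3}$ and the embedding constant. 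Taking the supremum over $\phii$ then gives an $\eps$-independent $c_2>0$ with
\[
 \norm[(W^{1,2}_0(\Om))^*]{\cept(t)}\le c_2\kl{\norm[\Lom2]{\na\cep(t)}+\norm[\Lom2]{\nep(t)}+\norm[\Lom2]{\uep(t)}}\quad\text{for a.e. }t\in(0,T),
\]
where $\norm[\Liom]{\cep(t)}\le\norm[\Liom]{c_0}$ has already been used.

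It then remains to square this inequality, integrate over $(0,T)$, and insert the bounds already at hand: $\io|\uep(t)|^2\le C$ for all $t$ by \eqref{eq:boundsLine1}, $\intnT\io|\na\cep|^2\le C$ by Lemma~\ref{lem:intnac2}, and $\intnT\io\nep^2\le C$, which follows from \eqref{eq:boundsLine4} by covering $(0,T)$ with $\lceil T\rceil$ intervals of length $\tau=1$. This yields the claimed $\eps$-uniform bound on $\norm[L^2((0,T),(W^{1,2}_0(\Om))^*)]{\cept}$. The computation is entirely routine; the only place that needs slight attention is the zeroth-order consumption term, where -- in contrast to the two-dimensional situation -- the embedding $W^{1,2}_0(\Om)\embeddedinto\Liom$ is unavailable in dimension three, so one has to pair the $\Lom6$ Sobolev bound for the test function with the $\Lom2$ bound on $\nep$ from Lemma~\ref{lem:allthebounds} rather than with the mere $L^1$ bound of Lemma~\ref{lem:nleq}.
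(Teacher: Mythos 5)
Your argument is correct and essentially the paper's own proof: test \eqref{eq:cep} against $W^{1,2}_0$-functions, integrate by parts using $\na\cdot\uep=0$, bound $\frac1\eps\ln(1+\eps\nep)\le\nep$ and $\norm[\Liom]{\cep}\le\norm[\Liom]{c_0}$, and close with the bounds on $\intnT\io|\na\cep|^2$, $\intnT\io\nep^2$ and $\uep$ from Lemma~\ref{lem:intnac2} and Lemma~\ref{lem:allthebounds}; whether one estimates the dual norm pointwise in time and then integrates, as you do, or dualizes directly in $L^2((0,T),W^{1,2}_0(\Om))$, as the paper does, is immaterial. Only a small remark: for the consumption term the detour through $\Lom{\frac65}$--$\Lom6$ and the Sobolev embedding is not needed, since pairing $\nep\in\Lom2$ with $\norm[\Lom2]{\phii}\le\norm[W^{1,2}(\Om)]{\phii}$ already suffices, as in the paper.
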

\begin{proof}
Let $T>0$. Employing Hölder's inequality and using that for any $ε>0$ and $s>0$ apparently $\frac{\ln(1+\eps s)}{\eps}\leq s$, we see that for any $\phii\in L^2((0,T),W^{1,2}_0(\Om))$ 
satisfying $\norm[\LT2{W^{1,2}(\Om)}]{\phii}\leq1$ we have 
\begin{align*}
 &\intnT\io\cept\phii\\&=\intnT\io\na\cep\cdot\na\phii-\intnT\io\cep\frac1\eps\ln(1+\eps\nep)\phii - \intnT\io \cep\uep•\na\phii\\
 \leq&\left(\intnT\io|\na\cep|^2\right)^{\frac12}\norm[\LT2{\Lom2}]{\na\phii} + \norm[L^\infty(\Om\times(0,T))]{\cep}\left(\intnT\io\nep^2\right)^{\frac12}\norm[\LT2{\Lom2}]{\phii} \\
 &+ \norm[L^\infty(\Om\times(0,T))]{\cep}\left(\intnT\io |\uep|^2\right)^{\frac12}\norm[\LT2{\Lom2}]{\na\phii},
\end{align*}
again concluding the proof with the aid of Lemma \ref{lem:allthebounds}.
\end{proof}

\begin{lemma}\label{lem:uept}
For all $T>0$ there is $C>0$ such that 
\begin{equation}\label{eq:lemuept}
 \norm[L^2((0,T),(W^{1,3}(\Om))^*)]{\uept}\leq C
\end{equation}
holds for any \epscond.
\end{lemma}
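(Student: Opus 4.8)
The plan is to test the fluid equation \eqref{eq:uep} against an arbitrary $\varphi \in L^2((0,T),W^{1,3}(\Om))$, or more precisely against its Helmholtz projection $\calP\varphi$, since the pressure term drops out upon testing with solenoidal functions and $\uept\in L^2_\sigma$. First I would recall the duality $\left(L^2((0,T),(W^{1,3}(\Om))^*)\right)^* = L^2((0,T),W^{1,\frac32}(\Om))$ — wait, that is not quite the exponent needed; the cleaner route is to bound $\uept$ directly in the stated space by estimating $\intnT\io\uept\cdot\varphi$ for $\varphi$ with $\norm[L^2((0,T),W^{1,3}(\Om))]{\varphi}\le 1$. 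Writing out
\begin{align*}
 \intnT\io\uept\cdot\varphi = -\intnT\io\na\uep:\na\calP\varphi - \intnT\io(Y_\eps\uep\otimes\uep):\na\calP\varphi + \intnT\io\nep\,\na\Phi\cdot\calP\varphi + \intnT\io f\cdot\calP\varphi,
\end{align*}
I would estimate the four terms separately, using that $\calP$ is bounded on $\Lom3$ (hence on $W^{1,3}$ up to constants, or at least that $\norm[\Lom3]{\calP\varphi}\le c\norm[\Lom3]{\varphi}$ and the embedding $W^{1,3}(\Om)\embeddedinto\Lom\infty$ in three dimensions, so $\calP\varphi$ is controlled in $L^2((0,T),\Lom\infty)$ and, via $\na\calP\varphi$, in $L^2((0,T),\Lom2)$).

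The diffusion term is immediate from the $L^2((0,T),\Lom2)$ bound on $\na\uep$ in \eqref{eq:boundsLine3}. For the convective term, $Y_\eps$ is a contraction on $L^2_\sigma(\Om)$, so $\norm[\Lom2]{Y_\eps\uep}\le\norm[\Lom2]{\uep}$; combined with $\norm[\Lom6]{\uep}\le k\norm[\Lom2]{\na\uep}$ from $W^{1,2}_0\embeddedinto\Lom6$ and $\norm[\Lom3]{\uep}\le\norm[\Lom2]{\uep}^{\frac12}\norm[\Lom6]{\uep}^{\frac12}$, one gets $Y_\eps\uep\otimes\uep$ bounded in, say, $L^2((0,T),\Lom{\frac32})$ (using $\sup_t\norm[\Lom2]{\uep(t)}\le C$ from \eqref{eq:boundsLine1} and $\na\uep\in L^2$ in time), which pairs against $\na\calP\varphi\in L^2((0,T),\Lom2)$ — one should double-check the Hölder exponents here, perhaps using the $\Lom\infty$-control of $\calP\varphi$ instead so that $\uep\otimes\uep\in L^1((0,T),\Lom1)$ pairs against $\na\calP\varphi\in L^2((0,T),\Lom\infty)$; either way the relevant $\uep$-norms are the ones already controlled uniformly in $\eps$ by Lemma \ref{lem:allthebounds}. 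The buoyancy term $\io\nep\na\Phi\cdot\calP\varphi$ is handled by $\norm[\Lom\infty]{\na\Phi}\le C$ (since $\Phi\in C^{1+\beta}(\Ombar)$) together with the $L^2((0,T),\Lom2)$ bound on $\nep$ from \eqref{eq:boundsLine4} and $\calP\varphi\in L^2((0,T),\Lom2)$. The forcing term is controlled by $f\in L^2((0,\infty),\Lom{\frac65})\subset L^2((0,T),\Lom{\frac65})$ from \eqref{reg:f}, paired with $\calP\varphi\in L^2((0,T),\Lom6)$.

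Collecting these, each contribution is bounded by a constant depending only on $T$ and the constant $C$ from Lemma \ref{lem:allthebounds} (and on the fixed data $\Phi$, $f$), uniformly in $\eps>0$, which gives \eqref{eq:lemuept}. The main obstacle — really the only nontrivial point — is the convective term: one must be careful that $Y_\eps$ does not spoil $\eps$-uniformity (it does not, being a contraction on $L^2_\sigma$), and one must choose the integrability exponents so that the product of two $\uep$-factors lands in a space for which Lemma \ref{lem:allthebounds} supplies a bound; in three dimensions with only $\uep\in L^\infty((0,T),\Lom2)\cap L^2((0,T),W^{1,2})$ available this forces the use of the weak target space $(W^{1,3}(\Om))^*$ (so that $\calP\varphi\in\Lom\infty$), which is precisely why the exponent $3$ appears in the statement.
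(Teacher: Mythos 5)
Your overall strategy is the paper's: dualize against $\psi\in L^2((0,T),W^{1,3}(\Om))$, use the (projected) equation, and bound the four resulting integrals by the $\eps$-uniform estimates of Lemma \ref{lem:allthebounds}, with the contractivity of $\Yep$ taking care of $\eps$-uniformity in the convective term; the diffusion, buoyancy and forcing terms are handled exactly as in the paper, and the paper simply tests with $\psi$ directly (i.e. works with solenoidal test functions) rather than inserting $\calP\psi$, so nothing hinges on boundedness of $\calP$ on $W^{1,3}(\Om)$.

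The point you must fix is the convective term. The embedding $W^{1,3}(\Om)\embeddedinto \Lom\infty$ is \emph{false} in three dimensions (this is the critical case $p=n$), so the fallback ``use the $\Lom\infty$-control of $\calP\varphi$'' does not exist; moreover that term contains $\na\calP\varphi$, not $\calP\varphi$, so an $\Lom\infty$ bound on $\calP\varphi$ would not help anyway, and consequently your closing explanation of why the exponent $3$ appears is also off. What does work is your first option, after correcting the pairing: by H\"older, $\norm[\Lom{\frac32}]{\Yep\uep\otimes\uep}\le \norm[\Lom2]{\Yep\uep}\,\norm[\Lom6]{\uep}\le C\,\norm[\Lom2]{\na\uep}$ (using $\sup_t\norm[\Lom2]{\uep}\le C$ and $W^{1,2}_0\embeddedinto\Lom6$), so that $\Yep\uep\otimes\uep$ is bounded in $L^2((0,T),\Lom{\frac32})$ uniformly in $\eps$, and this pairs against $\na\psi\in L^2((0,T),\Lom3)$ — not against $\Lom2$, and it is exactly this $L^{\frac32}$--$L^3$ duality that forces the exponent $3$ in the statement. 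This is the paper's estimate, written there as the spatial H\"older split $\norm[\Lom6]{\Yep\uep}\norm[\Lom2]{\uep}\norm[\Lom3]{\na\psi}$ together with $\norm[\Lom6]{\Yep\uep}\le k\norm[\Lom2]{\na\Yep\uep}=k\norm[\Lom2]{\Yep A^{\frac12}\uep}\le k\norm[\Lom2]{\na\uep}$. With this correction your argument closes and coincides with the paper's proof.
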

\begin{proof}
Let $\psi\in L^2(0,T,W^{1,3}(\Om))$ with $\norm[L^2(0,T,W^{1,3}(\Om))]{\psi}=1$. 
Then 
\begin{align}\label{eq:tobound-uept}
&\nn  \intnT\!\!\!\io \uept •\psi \leq \left| -\intnT\!\!\!\io \na\uep•\na\psi + \intnT\!\!\!\io \Yep\uep\otimes\uep \na\psi + \intnT\!\!\!\io \nep\na\Phi\cdot\psi+\intnT\!\!\!\io f\cdot\psi\right|\\
\leq& \norm[\LT2{\Lom2}]{\na \uep}\norm[\LT2{\Lom2}]{\na\psi} + \norm[\LT2{\Lom6}]{\Yep\uep} \norm[\LT{∞}{\Lom2}]{\uep}\norm[\LT2{\Lom3}]{\na\psi} \nn\\
&+ \norm[L^\infty(\Om)]{\na\Phi} \norm[\LT2{\Lom2}]{\nep}\norm[\LT2{\Lom2}]{\psi}+\norm[\LT2{\Lom{\frac65}}]{f}\norm[\LT2{\Lom6}]{\psi}.
\end{align}
Here we can use that by the embedding $W^{1,2}_{σ}(\Om)\embeddedinto L^6(\Om)$ and nonexpansiveness of $\Yep$ on $L^2_\sigma$ (see e.g. \cite[(II.3.4.6)]{sohr_book}) there is $k>0$ such that for any \epscond 
\[
 \norm[\Lom6]{\Yep\uep}\leq k\norm[\Lom2]{\na\Yep\uep}=k\norm[\Lom2]{A^{\frac12}\Yep\uep}=k\norm[\Lom2]{\Yep A^{\frac12}\uep}\leq k\norm[\Lom2]{A^{\frac12}\uep}=k\norm[\Lom2]{\na\uep}. 
\]
Thus, the bounds on $\norm[\LT2{\Lom2}]{\na\uep}$, $\norm[\LT{\infty}{\Lom2}]{\uep}$, $\norm[\LT2{\Lom2}]{\nep}$ from Lemma \ref{lem:allthebounds} entail boundedness of the expression in \eqref{eq:tobound-uept}, so that \eqref{eq:lemuept} results.
\end{proof}

\subsection{Passing to the limit. Proof of Theorem \ref{thm:exweaksol}}

With these lemmata we have collected sufficiently many estimates to construct weak solutions by compactness arguments. Before doing so, let us define what a weak solution is supposed to be:

\begin{definition}\label{def:weaksol}
 A weak solution of \eqref{eq:system} is a triple $(n,c,u)$ of functions such that 
\begin{align*}
 n&\in L^2_{loc}([0,\infty),L^2(\Om))\cap L^{\frac43}_{loc}([0,\infty),W^{1,\frac43}(\Om))\\
 c&\in L^2_{loc}([0,\infty),W^{1,2}(\Om))\\
 u&\in L^2_{loc}([0,\infty),W^{1,2}_{0,σ}(\Om)) 
\end{align*}
and that 
\begin{align*}
 -\intnT\!\!\!\!\io\!\!\! n\phii_t-\io\!\!\! n_0\phii(\cdot,0)-\intnT\!\!\!\!\io\!\!\! nu•\na\phii &= -\intnT\!\!\!\!\io\!\!\! \na n•\na \phii + \chi\intnT\!\!\!\!\io\!\!\! n\na c•\na \phii+\kappa\intnT\!\!\!\!\io\!\!\! n\phii - \mu\intnT\!\!\!\!\io\!\!\! n^2\phii\\
 -\intnT\!\!\!\!\io\!\!\! c\phii_t -\io\!\!\! c_0\phii(\cdot,0)-\intnT\!\!\!\!\io\!\!\! cu•\na\phii &= -\intnT\io\na c•\na \phii -\intnT\io nc\phii\\
 -\intnT\!\!\!\!\io\!\!\! u•\psi_t - \io\!\!\! u_0•\psi(\cdot,0) - \intnT\!\!\!\!\io\!\!\! u\otimes u•\na\psi &= -\intnT\!\!\!\!\io\!\!\!\na u•\na\psi + \intnT\!\!\!\!\io\!\!\! n\na\psi\na\Phi + \intnT\!\!\!\!\io\!\!\! f\cdot\psi
\end{align*}
hold for any $\phii\in C_0^\infty([0,\infty)\times\Ombar)$ and any $\psi\in C_{0,\sigma}^\infty([0,\infty)\times\Ombar)$, respectively.
\end{definition}

Such weak solutions do exist:

\begin{proposition}\label{prop:ex-weaksol}
There exist a sequence $(\eps_j)_{j\nat} \downto 0$ and functions $n,c,u$ such that $n\in L^2_{loc}([0,\infty),L^2(\Om))\cap L^{\frac43}_{loc}([0,\infty),W^{1,\frac43}(\Om))$, $c\in L^2_{loc}([0,\infty),W^{1,2}(\Om))$, $u\in L^2_{loc}([0,\infty),W^{1,2}_{0,σ}(\Om))$ and that 
\begin{align}
 \nep &\to n &\qquad &\mbox{in } L^{\frac43}_{loc}([0,\infty),L^p(\Om)) \quad &&\mbox{for all } p\in [1,\frac{12}5) \quad && \mbox{ and a.e. in }\Om\times(0,\infty), \label{conv:n}\\
 \cep &\to c &\qquad &\mbox{in } C^0_{loc}([0,\infty),L^p(\Om)) \quad &&\mbox{for all } p\in[1,6)\quad&& \mbox{ and a.e. in }\Om\times(0,\infty),\label{conv:c}\\
 \cep &\weakstarto c &\qquad&\mbox{in } L^{\infty}(\Om\times(t,t+1)) \quad &&\mbox{for all }t\ge0, \label{conv:cinfty}\\
 \uep&\to u &\qquad &\mbox{in } L^2_{loc}([0,\infty),L^p(\Om))\quad &&\mbox{for all } p\in[1,6)\quad&& \mbox{ and a.e. in }\Om\times(0,\infty),\label{conv:u}\\
 \na\nep&\weakto \na n&\qquad &\mbox{in } L^{\frac43}_{loc}([0,\infty),L^{\frac43}(\Om)),\label{conv:nan}\\
 \na\cep&\weakstarto \na c&\qquad & \mbox{in } L^\infty_{loc}([0,\infty),L^2(\Om)),\label{conv:nac}\\
 \na\uep&\weakto\na u&\qquad & \mbox{in } L^2_{loc}([0,\infty),L^2(\Om)),\label{conv:nau}\\
 \Yep \uep &\to \uep&\qquad&\mbox{in } L^2_{loc}([0,\infty),L^2(\Om)),\label{conv:Yu}\\
 \nep&\to n&\qquad & \mbox{in } L^2_{loc}([0,\infty),L^2(\Om)), \label{conv:nl2}
\end{align}
as $\eps=\eps_j\downto 0$ and such that $(n,c,u)$ form a weak solution to \eqref{eq:system} in the sense of Definition \ref{def:weaksol}.
\end{proposition}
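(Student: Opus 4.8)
The plan is to obtain the subsequence and the limit functions $(n,c,u)$ by Aubin--Lions type compactness, and then to pass to the limit $\eps\downto0$ in the weak formulation of \eqref{epsys}. First I would extract the subsequence: by Lemma~\ref{lem:allthebounds} the family $(\nep)_\eps$ is bounded in $L^{\frac43}_{loc}([0,\infty),W^{1,\frac43}(\Om))$, and by Lemma~\ref{lem:nept} $(\nept)_\eps$ is bounded in $L^1_{loc}([0,\infty),(W_0^{2,4}(\Om))^*)$; since in $\R^3$ one has $W^{1,\frac43}(\Om)\embeddedinto\embeddedinto L^p(\Om)$ for every $p<\frac{12}5$ and $L^p(\Om)\embeddedinto(W_0^{2,4}(\Om))^*$, the Aubin--Lions--Simon lemma yields \eqref{conv:n} along a sequence $(\eps_j)_j\downto0$, and a further subsequence gives a.e.\ convergence in $\Om\times(0,\infty)$. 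Similarly $(\cep)_\eps$ is bounded in $L^\infty_{loc}([0,\infty),W^{1,2}(\Om))$ (combine Lemma~\ref{lem:cbd} with \eqref{eq:boundsLine4}) and $(\cept)_\eps$ is bounded in $L^2_{loc}([0,\infty),(W^{1,2}_0(\Om))^*)$ by Lemma~\ref{lem:cept}, so the $C^0$-in-time version of the Aubin--Lions--Simon lemma (using $W^{1,2}(\Om)\embeddedinto\embeddedinto L^p(\Om)$ for $p<6$) produces \eqref{conv:c}, while \eqref{conv:cinfty} is immediate from Lemma~\ref{lem:cbd}; analogously \eqref{conv:u} follows from the boundedness of $(\uep)_\eps$ in $L^2_{loc}([0,\infty),W^{1,2}_{0,\sigma}(\Om))$ (Lemma~\ref{lem:allthebounds}) and of $(\uept)_\eps$ in $L^2_{loc}([0,\infty),(W^{1,3}(\Om))^*)$ (Lemma~\ref{lem:uept}). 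After passing to a common subsequence, the weak(-$*$) convergences \eqref{conv:nan}, \eqref{conv:nac}, \eqref{conv:nau} drop out of the same uniform bounds by reflexivity and weak-$*$ compactness, the limits being identified by means of the strong convergences just obtained; this simultaneously yields $n\in L^{\frac43}_{loc}([0,\infty),W^{1,\frac43}(\Om))$, $c\in L^2_{loc}([0,\infty),W^{1,2}(\Om))$ and $u\in L^2_{loc}([0,\infty),W^{1,2}_{0,\sigma}(\Om))$, the conditions $\na\cdot u=0$ and $u|_{\dom}=0$ being preserved under weak limits. Finally \eqref{conv:Yu} follows by writing $\Yep\uep-u=\Yep(\uep-u)+(\Yep u-u)$ and using $\norm[\mathcal{L}(L^2_\sigma(\Om))]{\Yep}\le1$ together with \eqref{conv:u} for the first term and $\Yep v\to v$ in $L^2_\sigma(\Om)$ as $\eps\downto0$ for each fixed $v$, plus dominated convergence in $t$, for the second.

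The decisive point is the strong convergence \eqref{conv:nl2}: a.e.\ convergence together with boundedness of $(\nep)_\eps$ in $L^2_{loc}([0,\infty),L^2(\Om))$ gives only weak $L^2$-convergence, but the super-quadratic bound $\int_t^{t+\tau}\io\nep^2\ln\nep\le C$ from \eqref{eq:boundsLine2} (together with the elementary estimate $s^2\ln s\ge-\tfrac1{2e}$ for $s>0$) shows, by the de la Vall\'ee--Poussin criterion, that $(\nep^2)_\eps$ is uniformly integrable over $\Om\times(0,T)$ for every $T>0$, so Vitali's convergence theorem upgrades the a.e.\ convergence to $\nep\to n$ in $L^2_{loc}([0,\infty),L^2(\Om))$. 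I expect this to be the main obstacle, since it is precisely strong $L^2$-convergence of $(\nep)_\eps$ that is needed to control the quadratic terms appearing in all three equations of \eqref{epsys}.

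It then remains to let $\eps=\eps_j\downto0$ in the weak formulation of \eqref{epsys} tested against $\phii\in C_0^\infty([0,\infty)\times\Ombar)$ resp.\ $\psi\in C_{0,\sigma}^\infty([0,\infty)\times\Ombar)$; since by Lemma~\ref{lem:locex} and Lemma~\ref{lem:epglobal} the triples $(\nep,\cep,\uep)$ are global classical solutions, this formulation is obtained by multiplying each equation by the test function and integrating by parts in space and time, the contribution at $t=0$ producing $-\io n_0\phii(\cdot,0)$, $-\io c_0\phii(\cdot,0)$ and $-\io u_0\cdot\psi(\cdot,0)$. The diffusive and time-derivative terms converge by \eqref{conv:n}--\eqref{conv:nau}, and the term involving $f$ does not depend on $\eps$. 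For the nonlinearities I would use that $\frac{\nep}{1+\eps\nep}\to n$ and $\frac1\eps\ln(1+\eps\nep)\to n$ a.e.\ with $0\le\frac{\nep}{1+\eps\nep}\le\nep$ and $0\le\frac1\eps\ln(1+\eps\nep)\le\nep$, so that by the uniform integrability of $(\nep^2)_\eps$ and Vitali both converge to $n$ strongly in $L^2_{loc}([0,\infty),L^2(\Om))$; the chemotaxis term $\intnT\io\frac{\nep}{1+\eps\nep}\na\cep\cdot\na\phii$ then tends to $\intnT\io n\na c\cdot\na\phii$ upon pairing the strongly $L^2$-convergent factor $\frac{\nep}{1+\eps\nep}\na\phii$ (boundedness of $\na\phii$ being used here) with $\na\cep\weakto\na c$, the consumption term $\intnT\io\cep\frac1\eps\ln(1+\eps\nep)\phii$ tends to $\intnT\io nc\phii$ (invoking \eqref{conv:c}) and the logistic term $\mu\intnT\io\nep^2\phii$ tends to $\mu\intnT\io n^2\phii$, both as products of $L^2_{loc}([0,\infty),L^2(\Om))$-strongly convergent factors, while the convective terms $\intnT\io\nep\uep\cdot\na\phii$, $\intnT\io\cep\uep\cdot\na\phii$, $\intnT\io\Yep\uep\otimes\uep\cdot\na\psi$ and the buoyancy term $\intnT\io\nep\na\Phi\cdot\psi$ converge by combining \eqref{conv:n}, \eqref{conv:nl2}, \eqref{conv:c}, \eqref{conv:u} and \eqref{conv:Yu} with the boundedness of $\na\phii$, $\na\psi$ and $\na\Phi$. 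Collecting these limits shows that $(n,c,u)$ satisfies the three identities of Definition~\ref{def:weaksol}, which completes the proof.
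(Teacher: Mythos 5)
Your proposal is correct and follows essentially the same route as the paper: the same Aubin--Lions--Simon compactness built on Lemma \ref{lem:allthebounds} together with Lemmas \ref{lem:nept}--\ref{lem:uept}, the same treatment of $\Yep\uep$, and the same way of passing to the limit in the weak formulation. The only cosmetic difference is in \eqref{conv:nl2}: you invoke de la Vall\'ee--Poussin/Vitali with the a.e.\ convergence, whereas the paper uses Dunford--Pettis to get $\nep^2\weakto n^2$ in $L^1$ and then upgrades the weak $L^2$ convergence via convergence of norms -- both arguments rest on the same bound on $\int\!\!\int\nep^2\ln\nep$.
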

\begin{proof}
For any $p\in[1,\frac{12}5)$, $W^{1,\frac43}(\Om)\embeddedinto\embeddedinto \Lom p\embeddedinto (W_0^{2,4}(\Om))^*$, so that for any $T>0$ the bound on $\norm[L^{\frac43}([0,T),W^{1,\frac43}(\Om))]{\nep}$ from Lemma \ref{lem:allthebounds}, which was independent of $ε$, together with Lemma \ref{lem:nept} and \cite[Corollary 4]{simon} shows relative compactness of $\set{\nep; \eps>0}$ 
in $L^{\frac43}([0,T),L^p(\Om))$ and thus ensures the existence of a sequence $(\eps_j)_j$ satisfying \eqref{conv:n}. Because for any $T>0$ there is a uniform bound on \(\intnT\io Ψ(\nep^2)\) for $Ψ(x)=\frac x2 \ln(x)\), $\set{\nep^2; \eps>0}$ is weakly relatively precompact in $L^1(\Om\times(0,T))\) by the Dunford-Pettis theorem (cf. \cite[Thm. IV.8.9]{dunford_schwartz_I_58}) 
and hence, along a subsequence of $(ε_j)_{j\nat}$, $\nep^2\weakto z$ in $L^1(\Om\times(0,T))$ for some $z\in L^1(\Om\times(0,T))$, where $z$ has to coincide with $n^2$ due to \eqref{conv:n}. In particular, $\intnT\io \nep^2 \to \intnT\io n^2$ as $ε_j\to 0$. Since moreover, along a further subsequence, $\nep\weakto n$ in $L^2_{loc}([0,∞),L^2(\Om))$ due to \eqref{eq:boundsLine4}, we obtain \eqref{conv:nl2}.
Similarly, bounds on $\cep$ with respect to the norm of $L^\infty([0,T),W^{1,2}(\Om))$ and on $\cept$ in $L^2([0,T),(W^{1,2}(\Om))^*)$ as obtained in Lemma \ref{lem:allthebounds} and Lemma \ref{lem:cept} and the embedding $W^{1,2}(\Om)\embeddedinto\embeddedinto L^p(\Om)\embeddedinto (W_0^{1,2}(\Om))^*$ for all $p\in[1,6)$ allow for an application of \cite[Corollary 4]{simon}, which yields \eqref{conv:c} along a suitable subsequence of the sequence previously found.
Similar reasoning for $u$, combining bounds on $\uep$ in $L^2([0,T),W^{1,2}_\sigma(\Om))$ and on $\uept$ in $L^2([0,T),(W^{1,3}_\sigma(\Om))^*)$, results in \eqref{conv:u}. 
Due to \eqref{conv:u}, also for almost every $t>0$ we have $\uep(\cdot,t)\to u(\cdot,t)$ and taking into account \cite[II.(3.4.6)]{sohr_book} 
and \cite[II.(3.4.8)]{sohr_book} 
shows that $\norm[L^2(\Om)]{\Yep\uep-u}\leq\norm[L^2(\Om)]{\Yep\uep-\Yep u+\Yep u- Yu}\leq \norm[L^2(\Om)]{\uep-u}+\norm[L^2(\Om)]{(\Yep-Y)u}\to 0$ for a.e. $t>0$. Since $\norm[L^2(\Om)]{\Yep\uep}\leq\norm[L^2(\Om)]{\uep}$ and $\norm[L^2(\Om)]{\uep}$ converges in $L^2((0,T))$, a version of Lebesgue's theorem ensures the validity of \eqref{conv:Yu}. 
Convergence of the gradients along further subsequences, as asserted in \eqref{conv:nan}, \eqref{conv:nac}, \eqref{conv:nau}, is easily obtained from the bounds given in Lemma \ref{lem:allthebounds}. 
The convergence properties asserted in \eqref{conv:c}, \eqref{conv:u}, \eqref{conv:nan}, \eqref{conv:nac},\eqref{conv:nau}, \eqref{conv:Yu}, \eqref{conv:nl2} finally, are sufficient to pass to the limit in each integral making up a weak formulation of system \eqref{epsys}, so that $(n,c,u)$ is a weak solution to \eqref{eq:system}. 
\end{proof}

The most important consequence of this proposition is the following: 

\begin{proof}[Proof of Theorem \ref{thm:exweaksol}]
 The theorem is part of the statement proven by Proposition \ref{prop:ex-weaksol}.
\end{proof}

\section{Eventual smoothness and asymptotics}\label{sec:thm2}

\subsection{Lower bound for the bacterial mass}
Although we already know an upper bound for $\io n$, we are still lacking a corresponding estimate from below, which was crucial in the derivation of the convergence of $c$ in \cite{wk_ns_oxytaxis}. 
Consideration of the function
\[
 \calG_{ε,B}(t):= 
 \io\nep(t)
 - \frac\kappa\my \io\ln \frac{\my \nep(t)}{\kappa} + \frac B2\io \cep^2(t), \qquad t\in(0,\infty)
\]
will help us to recover this lower bound. At the same time, we will obtain another cornerstone for the proof of convergence of $n$ (see Lemma \ref{lem:intintnp} and Lemma \ref{lem:convergence}).

In \cite{wk_ksns_logsource}, 
a similar functional has been employed to obtain convergence of $n$ and $c$ to a constant equilibrium. The model considered there contains the Keller-Segel equation as second equation and due to the contributions of the production term $+n$ therein, whose influence is increased with increasing values of $B$, it was not possible to choose $B$ arbitrarily large there, which in the end resulted in a largeness condition on $\my$ (\cite[(8.3)]{wk_ksns_logsource}).
Thanks to the consumption term in \eqref{eq:nep}, all terms obtained from this equation work in favour of our estimate and we do not need a corresponding condition on $\my$ and can choose $B$ in such a way that $\calG_{ε,B}$ becomes an energy functional.

\begin{lemma}\label{lem:ddtlog}
 There is $B_0$ such that for any $B>B_0$ and any \epscond, we have
\begin{equation}\label{eq:ddtG}
 \ddt \calG_{ε,B}(t) + \my \io\left(\nep-\frac\kappa\my\right)^2+\frac{\kappa}{2\my} \io \frac{|\na\nep|^2}{\nep^2} \leq 0 \onni. 
\end{equation}
\end{lemma}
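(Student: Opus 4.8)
The plan is to differentiate $\calG_{ε,B}$ term by term and show that all contributions either carry a good sign or can be absorbed into the two dissipative terms on the left of \eqref{eq:ddtG}. First I would compute $\ddt\io\nep$ using \eqref{eq:nep} and integration by parts; the transport and diffusion terms vanish (the former by $\na\cdot\uep=0$, the latter by the Neumann condition), the chemotaxis term integrates to zero against the constant $1$, and we are left with $\kappa\io\nep-\my\io\nep^2$. Next, for $-\frac\kappa\my\io\ln\frac{\my\nep}{\kappa}$, I would differentiate to get $-\frac\kappa\my\io\frac{\nept}{\nep}$ and substitute \eqref{eq:nep}. Here the diffusion term produces the \emph{good} term $-\frac\kappa\my\io\frac{|\na\nep|^2}{\nep^2}$ (after integrating by parts, since $\delny\nep=0$), the transport term again vanishes by divergence-freeness, the logistic part gives $-\frac\kappa\my\io\frac{\kappa\nep-\my\nep^2}{\nep}=-\frac{\kappa^2}{\my}|\Om|+\kappa\io\nep$, and the chemotaxis term contributes $-\frac{\chi\kappa}{\my}\io\frac{\nep}{(1+\eps\nep)}\na\cep\cdot\na\left(\frac1\nep\right) = \frac{\chi\kappa}{\my}\io\frac{\na\cep\cdot\na\nep}{\nep(1+\eps\nep)}$, which needs to be controlled. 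Combining the $\nep$-contributions so far, note that $\kappa\io\nep-\my\io\nep^2+(-\frac{\kappa^2}{\my}|\Om|+\kappa\io\nep)$ should be rearranged as $-\my\io(\nep-\frac\kappa\my)^2$ plus a leftover that turns out to be identically zero (expanding $\my(\nep-\frac\kappa\my)^2=\my\nep^2-2\kappa\nep+\frac{\kappa^2}{\my}$ and integrating), so the ODE part of $\ddt[\io\nep-\frac\kappa\my\io\ln\frac{\my\nep}{\kappa}]$ is exactly $-\my\io(\nep-\frac\kappa\my)^2$.

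It then remains to handle the cross term $\frac{\chi\kappa}{\my}\io\frac{\na\cep\cdot\na\nep}{\nep(1+\eps\nep)}$ together with the derivative of $\frac B2\io\cep^2$. By \YI, the cross term is bounded by $\frac\kappa{4\my}\io\frac{|\na\nep|^2}{\nep^2}+\frac{\chi^2\kappa}{\my}\io\frac{|\na\cep|^2}{(1+\eps\nep)^2}\le\frac\kappa{4\my}\io\frac{|\na\nep|^2}{\nep^2}+\frac{\chi^2\kappa}{\my}\io|\na\cep|^2$, so half of the good gradient term survives and we are left needing to dominate $\frac{\chi^2\kappa}{\my}\io|\na\cep|^2$. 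For $\frac B2\io\cep^2$, multiplying \eqref{eq:cep} by $B\cep$ and integrating (the transport term vanishing as in Lemma \ref{lem:intnac2}) gives $\frac B2\ddt\io\cep^2=-B\io|\na\cep|^2-B\io\cep^2\frac1\eps\ln(1+\eps\nep)\le -B\io|\na\cep|^2$, using nonnegativity of $\cep$ and of $\frac1\eps\ln(1+\eps\nep)$. Choosing $B_0:=\frac{\chi^2\kappa}{\my}$ and any $B>B_0$, the term $-B\io|\na\cep|^2$ absorbs $\frac{\chi^2\kappa}{\my}\io|\na\cep|^2$ with room to spare, and assembling everything yields precisely \eqref{eq:ddtG} (with the surviving $\frac\kappa{4\my}\io\frac{|\na\nep|^2}{\nep^2}$; if the stated constant $\frac\kappa{2\my}$ is wanted one simply splits the Young estimate more carefully, or notes that the cross term can also be absorbed using a fraction of the $B\io|\na\cep|^2$ on both sides).

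The main obstacle is the chemotaxis cross term: it is the only term not manifestly signed, and the whole point of introducing the $\frac B2\io\cep^2$ summand is to generate a $-B\io|\na\cep|^2$ large enough to swallow it after \YI{} has traded part of it against the $\frac{|\na\nep|^2}{\nep^2}$ dissipation. What makes this work here — and is worth emphasising — is that, unlike in \cite{wk_ksns_logsource}, the $c$-equation is of consumption type, so the zeroth-order term $-B\io\cep^2\frac1\eps\ln(1+\eps\nep)$ has the favourable sign and does not reintroduce any $B$-dependent bad term; consequently $B$ may be taken as large as needed with no constraint linking it to $\my$, and no largeness hypothesis on $\my$ is required. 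One should also double-check the integrations by parts are legitimate: by Lemma \ref{lem:locex} the approximate solutions are classical and smooth in $\Ombar\times(0,\Tmaxe)$ with $\nep>0$ strictly positive (strong maximum principle applied to \eqref{eq:nep}), so $\ln\nep$, $\frac1\nep$, $\frac1\cep$ are not needed — only $\ln\nep$ and $\frac1\nep$, which are fine — and all boundary integrals arising vanish by the homogeneous Neumann conditions, so no curvature/convexity considerations enter at this stage.
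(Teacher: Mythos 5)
Your proposal is correct and takes essentially the same route as the paper's proof: termwise differentiation of $\calG_{ε,B}$, cancellation of the transport terms by divergence-freeness, completing the square to obtain $-\my\io\left(\nep-\frac\kappa\my\right)^2$, Young's inequality on the chemotaxis cross term using $\frac1{1+\eps\nep}\le1$, dropping the nonnegative consumption term, and absorbing the resulting $|\na\cep|^2$-term by choosing $B$ large (the paper spends $\frac\kappa{2\my}$ in Young and takes $B_0=\frac{\kappa\chi^2}{2\my}$; your constants differ only trivially). The only slip is cosmetic: with your splitting the surviving dissipation is $\frac{3\kappa}{4\my}\io\frac{|\na\nep|^2}{\nep^2}$, not $\frac{\kappa}{4\my}$, so \eqref{eq:ddtG} already follows and the closing caveat about refining the Young estimate is unnecessary.
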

\begin{proof}
Let $B_0:= \frac{\kappa\chi^2}{2\my}$ and $B>B_0$. The derivative of $\calG_{ε,B}$ then satisfies 
 \begin{align*}
  \ddt \calG_{ε,B} =& \io \nept-\frac{\kappa}\my \io \frac{\nept}{\nep} + B\io \cep\cept\\
=& \kappa\io \nep -\my \io \nep^2 - \frac\kappa\my \io \frac{\Lap \nep}{\nep} + \frac\kappa\my \chi \io \frac{\na\cdot\kl{\frac{\nep}{1+\eps\nep}\na\cep}}{\nep} - \frac {\kappa^2}\my \io 1 + \kappa\io \nep +\frac\kappa\my \io \uep•\frac{\na\nep}{\nep}\\
 &+ B\io \cep\Lap \cep - B\io\cep^2\frac1\eps\ln(1+\eps\nep)-B\io \cep\na\cep\cdot u \fate.
 \end{align*}
Here we can use that $\uep$ is divergence-free and hence integration by parts shows that $\io \na(\frac12 \cep^2)\cdot \uep$ vanishes as well as $\io \uep\cdot \na \ln \nep$. Furthermore we can summarize the terms without derivatives according to 
\[
 -\my\io\nep^2-\frac{\kappa^2}\my\io1+2\kappa\io\nep=-\my\io\kl{\nep-\frac\kappa\my}^2 \fate
\]
so that for all $ε>0$ we obtain 
\begin{align*}
 \ddt \calG_{ε,B} =& -\my\io\kl{\nep-\frac\kappa\my}^2 - \frac\kappa\my\io\frac{|\na\nep|^2}{\nep^2}+\frac{\kappa\chi}\my\io\frac{\nep\na \cep}{(1+\eps\nep)\nep^2}\cdot\na \nep\\ &\quad - B\io |\na \cep|^2- B\io\cep^2\frac1\eps\ln(1+\eps\nep)
\end{align*}
on $(0,\infty)$. Nonnegativity of $\io \cep^2\frac1\eps\ln(1+\eps\nep)$ and an application of Young's inequality together with the trivial estimate $\frac{\nep}{(1+\eps\nep)\nep}\leq 1$ yield
\begin{align*}
 \ddt \calG_{ε,B} \leq&-\my\io\kl{\nep-\frac\kappa\my}^2-\frac\kappa\my\io\frac{|\na \nep|^2}{\nep^2}+\frac\kappa{2\my}\io\frac{|\na\nep|^2}{\nep^2}+\frac{\kappa\chi^2}{2\my}\io|\na\cep|^2-B\io|\na\cep|^2
\end{align*}
on $(0,\infty)$ for any $ε>0$, so that we finally arrive at \eqref{eq:ddtG}.
\end{proof}

We collect the estimates implicitly contained in Lemma \ref{lem:ddtlog}:
\begin{lemma}\label{lem:estimatesfromddtG}
 There are $k>0$ and $C>0$ such that for any \epscond
\begin{align}
 \io \nep(\cdot,t)>k\qquad \mbox{for all } t>0 \label{eq:ionepgeq}\\
 \intninf\!\! \io \left(\nep-\frac{\kappa}\mu\right)^2\leq C\label{eq:iinminusgwqleq}.
\end{align}
\end{lemma}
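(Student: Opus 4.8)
The goal is to extract the two pointwise-in-time and integral-in-time bounds directly from the differential inequality \eqref{eq:ddtG} furnished by Lemma \ref{lem:ddtlog}. The plan is to integrate \eqref{eq:ddtG} in time and to exploit the coercivity of the functional $\calG_{\eps,B}$ together with uniform (in $\eps$) control on $\calG_{\eps,B}(0)$. First I would fix $B>B_0$ once and for all, say $B:=B_0+1$, so that Lemma \ref{lem:ddtlog} applies and $\calG_{\eps,B}$ decreases along the flow. Since the map $s\mapsto s-\frac\kappa\mu\ln\frac{\mu s}{\kappa}$ attains its minimum at $s=\frac\kappa\mu$ and is bounded below on $(0,\infty)$ (with a bound independent of $\eps$), and since $\frac B2\io\cep^2\ge0$, the functional $\calG_{\eps,B}$ is bounded below by a constant $-C_1$ not depending on $\eps$. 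On the other hand $\calG_{\eps,B}(0)$ is computed from the fixed initial data $n_0,c_0$ only (using $n_0>0$ in $\Ombar$, $n_0\in C^0(\Ombar)$ so $\ln n_0$ is bounded, and $c_0\in L^2(\Om)$), hence $\calG_{\eps,B}(0)\le C_2$ uniformly in $\eps$. Integrating \eqref{eq:ddtG} over $(0,\infty)$ gives
\[
 \mu\intninf\io\Bigl(\nep-\tfrac\kappa\mu\Bigr)^2 + \tfrac{\kappa}{2\mu}\intninf\io\frac{|\na\nep|^2}{\nep^2} \le \calG_{\eps,B}(0)-\liminf_{t\to\infty}\calG_{\eps,B}(t)\le C_2+C_1 =: C,
\]
which yields \eqref{eq:iinminusgwqleq} (and, incidentally, a uniform bound on $\intninf\io\frac{|\na\nep|^2}{\nep^2}$, which is presumably used later).

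For the lower bound \eqref{eq:ionepgeq}, I would argue by contradiction against smallness of $\io\nep(t)$, using the monotonicity $\calG_{\eps,B}(t)\le\calG_{\eps,B}(0)\le C_2$ for all $t>0$. Suppose $\io\nep(t_0)$ is very small for some $t_0>0$ and some $\eps$. Dropping the nonnegative term $\frac B2\io\cep^2(t_0)$, we have $\io\nep(t_0)-\frac\kappa\mu\io\ln\frac{\mu\nep(t_0)}{\kappa}\le C_2$. By Jensen's inequality applied to the concave function $\ln$,
\[
 \io\ln\frac{\mu\nep(t_0)}{\kappa}\le|\Om|\ln\Bigl(\frac{\mu}{\kappa|\Om|}\io\nep(t_0)\Bigr),
\]
so $-\frac\kappa\mu\io\ln\frac{\mu\nep(t_0)}{\kappa}\ge-\frac{\kappa|\Om|}{\mu}\ln\bigl(\frac{\mu}{\kappa|\Om|}\io\nep(t_0)\bigr)$, which tends to $+\infty$ as $\io\nep(t_0)\downto0$. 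Combined with the boundedness of the first summand (it is just $\io\nep(t_0)\ge0$ and small), this forces $\calG_{\eps,B}(t_0)\to+\infty$ as $\io\nep(t_0)\downto0$, contradicting $\calG_{\eps,B}(t_0)\le C_2$. Quantitatively: there is $k>0$, depending only on $C_2$ (hence on the initial data) and on $\kappa,\mu,|\Om|$ but not on $\eps$ or $t$, such that $\io\nep(t)-\frac{\kappa|\Om|}{\mu}\ln\bigl(\frac{\mu}{\kappa|\Om|}\io\nep(t)\bigr)\le C_2$ is incompatible with $\io\nep(t)\le k$; this is exactly \eqref{eq:ionepgeq}.

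The main (and essentially only) technical point is bookkeeping the uniformity in $\eps$: one must check that the lower bound on $s\mapsto s-\frac\kappa\mu\ln\frac{\mu s}{\kappa}$ and the upper bound on $\calG_{\eps,B}(0)$ do not degenerate as $\eps\downto0$, which is immediate since neither depends on $\eps$ — the functional $\calG_{\eps,B}$ contains $\eps$ only implicitly through $\nep,\cep$, and at $t=0$ these are $n_0,c_0$. A minor subtlety is that \eqref{eq:ddtG} holds on $(0,\infty)$ and $\calG_{\eps,B}$ is continuous up to $t=0$ (by the regularity in Lemma \ref{lem:locex} and $c_0\in L^2(\Om)$), so integrating from $0$ is legitimate. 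No convexity assumption on $\Om$ or any largeness of $\mu$ enters, in contrast to \cite{wk_ksns_logsource}.
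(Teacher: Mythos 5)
Your argument is correct and follows essentially the same route as the paper: integrate the inequality of Lemma \ref{lem:ddtlog}, control $\calG_{\eps,B}(0)$ by the fixed initial data, bound the functional from below, and deduce \eqref{eq:iinminusgwqleq} from the integrated dissipation terms and \eqref{eq:ionepgeq} from Jensen's inequality applied to the logarithmic part. The only cosmetic difference is that you bound $\calG_{\eps,B}(t)$ from below via the pointwise minimum of $s\mapsto s-\frac{\kappa}{\mu}\ln\frac{\mu s}{\kappa}$, whereas the paper instead uses $\ln s\le s$ together with the mass bound of Lemma \ref{lem:nleq}; both are equally valid and uniform in $\eps$.
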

\begin{proof}
 Let $B>B_0$ with $B_0$ as in Lemma \ref{lem:ddtlog} and $ε>0$. For any $t>0$. integration of \eqref{eq:ddtG} on $(0,t)$ yields 
\begin{align*}
 \io \nep(t)&-\frac\kappa\mu\io \ln\frac{\mu \nep(t)}\kappa + \frac{B}2\io\cep^2(t)+\mu\intnt\io\left(\nep-\frac\kappa\mu\right)+\frac{\kappa}{2\mu}\intnt\io \frac{|\na\nep|^2}{\nep^2} \\
&\leq \io n_0-\frac\kappa\mu\io\ln\frac{\mu n_0}\kappa + \frac{B}2\io c_0^2.
\end{align*}
In particular, for all $t>0$, 
\begin{align}\nn
 \frac\kappa\mu\io\ln\frac{\mu\nep(t)}{\kappa} &\geq \io \nep(t)-\io n_0+\frac B2\io \cep^2(t)-\frac B2\io c_0^2+\frac{\kappa}\mu\io\ln\frac{\mu n_0}\kappa \\ 
&+ \mu\intnt\io \left(\nep-\frac\kappa\mu\right)^2+\frac{\kappa}{2\mu}\intnt\io\frac{|\na\nep|^2}{\nep^2}\nn\\
 &\geq -\io n_0 - \frac B2 \io c_0^2+\frac{\kappa}\mu\io\ln\frac{\mu n_0}\kappa + \mu\intnt\io \left(\nep-\frac\kappa\mu\right)^2+\frac{\kappa}{2\mu}\intnt\io\frac{|\na\nep|^2}{\nep^2}\label{eq:firstestioln}
\end{align}
Since $\frac\kappa\mu\io\ln\frac{\mu\nep(t)}{\kappa}\leq \frac\kappa\mu \io \frac{\mu \nep(t)}\kappa=\io \nep(t)$ is bounded according to Lemma \ref{lem:nleq}, this entails \eqref{eq:iinminusgwqleq}. 
The estimate in \eqref{eq:firstestioln} also shows that 
\[
 \io \ln\frac{\mu \nep(t)}{\kappa} \frac{1}{|\Om|} \geq \frac{\mu}{|\Om|\kappa}\left[-\io n_0 - \frac B2 \io c_0^2 +\frac\kappa\mu\io \ln \frac{\mu n_0}{\kappa} \right]=:k_1 \qquad \mbox{for all }t>0,
\]
so that Jensen's inequality implies 
\[
 \frac{\mu}{\kappa}\io \nep(t) \frac{1}{|\Om|}\geq e^{\io \ln\frac{\mu \nep(\cdot,t)}{\kappa} \frac1{|\Om|}}\geq e^{k_1}\fat
\]
and hence \eqref{eq:ionepgeq}.
\end{proof}

\subsection{Decay of oxygen}
With the lower bound on the bacterial mass from Lemma \ref{lem:estimatesfromddtG} we are well-equipped for the derivation of decay of $c$ by means of \eqref{eq:cep}. Smallness of $c$ will play an important role in Section \ref{sec:bdnessofn}, when we derive bounds on $\nep$ in higher $L^p$-norms via a differential inequality holding for small values of $\cep$ only. For turning such bounds into information on $n$, it will be crucial that the validity of the ODI does not hinge on $\eps$ too much, i.e. that the decay of $\cep$ be uniform in $\eps$.
In pursuance of this uniformity, in the following lemma we will consider $c$ instead of $\cep$ and afterwards carry back the decay information to the $\cep$ (which, due to their differentiability, are much better suited for making an appearance in ODIs like that in the proof of Lemma \ref{lem:nepbd}).
The idea of the proof of boundedness of $\cep$ is taken from \cite[Sec. 4]{wk_ns_oxytaxis}.

\begin{lemma}\label{lem:iintcto0}
 For any $\eta>0$ there is $T>0$ such that for any $t>T$
\[
 0\leq \intttpe\!\!\!\io c <\eta.
\]
\end{lemma}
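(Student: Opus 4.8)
The plan is to establish a decay estimate for the spatial integral $\io c(\cdot,t)$ by exploiting the consumption equation $c_t + u\cdot\nabla c = \Delta c - nc$ together with the uniform lower bound $\io\nep \ge k$ from Lemma \ref{lem:estimatesfromddtG}, and then transferring this to $c$ via the convergence $\cep\to c$ from Proposition \ref{prop:ex-weaksol}. First I would integrate the second equation of \eqref{eq:system} in space: using $\delny c=0$ and $\na\cdot u=0$, the terms $\io\Delta c$ and $\io u\cdot\na c$ vanish, leaving formally $\ddt\io c = -\io nc$. The nonnegativity of $n$ and $c$ gives $\io c(\cdot,t)$ nonincreasing, hence convergent to some limit $\ell\ge 0$, and I claim $\ell=0$. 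If $\ell>0$, then integrating in time yields $\int_0^\infty\io nc <\infty$. The task is then to contradict this by showing $\io nc$ cannot be integrable when $\io c$ stays bounded below.

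The key mechanism for the contradiction is: if $\io c$ does not go to zero, then since $\norm[\Liom]{\cep(\cdot,t)}\le\norm[\Liom]{c_0}$ (Lemma \ref{lem:cbd}) the mass $\io c$ being bounded below forces $c$ to be "spread out", and combined with the lower bound $\io n\ge k$ one wants to conclude $\io nc$ is bounded below along a sequence of times, or better, in an integrated sense. A clean route: from $\frac{d}{dt}\io\cep \le -\io\nep\cep$ together with $\int_t^{t+1}\io\nep \le C$ and $\io\nep\ge k$, one applies a Cauchy–Schwarz or a suitable weighted argument; alternatively one uses the pointwise differential inequality for $\io\cep$ and the fact (from Lemma \ref{lem:iintcto0}'s predecessors or from the $L^2$ bound $\int_0^\infty\io(\nep-\kappa/\mu)^2<\infty$ in \eqref{eq:iinminusgwqleq}) that $\nep$ is close to $\kappa/\mu$ in a time-averaged sense, so $\io\nep\cep$ is comparable to $\frac{\kappa}{\mu}\io\cep$ up to an error controlled by $\big(\int\io(\nep-\kappa/\mu)^2\big)^{1/2}\big(\int\io\cep^2\big)^{1/2}$, and the latter is finite because $\norm[\Liom]{\cep}$ is bounded and $\int_t^{t+1}\io\cep$ would converge. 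This would give $\ddt\io\cep \lesssim -\frac{\kappa}{\mu}\io\cep + (\text{small error})$, a Gronwall-type decay of $\io\cep$, whence $\int_t^{t+1}\io\cep \to 0$; passing to the limit $\eps=\eps_j\to 0$ using \eqref{conv:c} (or dominated convergence with the $L^\infty$ bound) transfers this to $c$.

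More concretely, I would work directly with $\cep$ rather than $c$ to stay in the smooth category: set $y_\eps(t):=\io\cep(\cdot,t)$, note $y_\eps'(t) = -\io\cep\tfrac1\eps\ln(1+\eps\nep) \le 0$ and $\tfrac1\eps\ln(1+\eps\nep)\le\nep$, so $y_\eps$ is nonincreasing. Integrating over $(0,\infty)$ gives $\int_0^\infty\io\cep\tfrac1\eps\ln(1+\eps\nep)\,dt \le \io c_0$, uniformly in $\eps$. Using $\tfrac1\eps\ln(1+\eps\nep)\ge \tfrac{\nep}{1+\eps\nep}$ is not quite enough because of the $\eps\nep$; instead I would split according to whether $\nep$ is large or small, or—cleaner—use that on the set where $\nep\le M$ one has $\tfrac1\eps\ln(1+\eps\nep)\ge\tfrac{\nep}{1+\eps M}$, and control the large-$\nep$ set via the $L^2$ bound $\int_t^{t+1}\io\nep^2\le C$. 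Combining with $\io\nep\ge k$ one extracts a lower bound of the form $\int_t^{t+1}\io\cep\tfrac1\eps\ln(1+\eps\nep) \gtrsim \big(\int_t^{t+1}\io\cep\big) - o(1)$ in a way that, since the left side is summable in $t$ over disjoint unit intervals, forces $\int_t^{t+1}\io\cep \to 0$ as $t\to\infty$, uniformly in $\eps$ for $\eps$ small; then $\int_t^{t+1}\io c \le \liminf_{j}\int_t^{t+1}\io c_{\eps_j} <\eta$ for $t$ large by \eqref{conv:c}, and nonnegativity of $c$ gives the lower bound $0\le\int_t^{t+1}\io c$.

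The main obstacle I expect is making the extraction of $\int_t^{t+1}\io\cep$ from $\int_t^{t+1}\io\cep\tfrac1\eps\ln(1+\eps\nep)$ genuinely uniform in $\eps$: one cannot simply bound $\tfrac1\eps\ln(1+\eps\nep)$ below by a fixed multiple of a "bulk" quantity without handling the region of large $\nep$ (where the logarithm saturates) and the region of small $\cep$. The resolution is to use $\io\nep\ge k$ and $\int_t^{t+1}\io\nep^2\le C$ to guarantee that $\nep$ is, on a set of controlled measure and in a time-averaged sense, comparable to $\kappa/\mu$ (this is precisely what \eqref{eq:iinminusgwqleq} buys us), so that $\tfrac1\eps\ln(1+\eps\nep)\to\nep$ boundedly and one gets $\int_t^{t+1}\io c\,\nep$-type quantities bounded below by $\tfrac{\kappa}{2\mu}\int_t^{t+1}\io c$ minus an error that tends to $0$ as $t\to\infty$. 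With that in hand the summability argument closes; the transfer to $c$ is then routine via the established convergences.
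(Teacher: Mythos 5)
Your proposal is sound in substance and shares the paper's first step: integrating the approximate $c$-equation in space--time to get $\int_0^\infty\io\cep\tfrac1\eps\ln(1+\eps\nep)\le\io c_0$, hence after passing to the limit $\int_0^\infty\io nc\le \io c_0$ and $\intttpe\io nc\to0$. The key step, converting this into smallness of $\intttpe\io c$, you do by a genuinely different decomposition: you split $n=\tfrac\kappa\mu+(n-\tfrac\kappa\mu)$ and control the error through $\int_0^\infty\io(n-\tfrac\kappa\mu)^2<\infty$ (which indeed transfers from \eqref{eq:iinminusgwqleq} via \eqref{conv:nl2}, as in Lemma \ref{lem:intintnp}) together with the $L^\infty$ bound on $c$; the paper instead splits $c=\cbar+(c-\cbar)$, handles $\intttpe\io n(c-\cbar)$ by Cauchy--Schwarz, Poincar\'e and $\na c\in L^2(\Om\times(0,\infty))$ (Lemma \ref{lem:intnac2}), and bounds $\intttpe\cbar\io n\ge\tfrac{k}{|\Om|}\intttpe\io c$ via the mass lower bound of Lemma \ref{lem:estimatesfromddtG}. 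Either route closes the argument; yours avoids Poincar\'e and the gradient dissipation bound, while the paper's relies only on $\io n\ge k$ and so does not degenerate as $\kappa\to0$ (your lower bound $\tfrac\kappa\mu\intttpe\io c$ is vacuous for $\kappa=0$, a case formally admitted by Theorem \ref{thm:exweaksol}). Also, the uniformity-in-$\eps$ issue you name as the main obstacle can simply be bypassed: since the lemma concerns the limit $c$, run the whole estimate at the limit level (as the paper does) and recover the $\eps$-level statement afterwards with the weaker quantifier ordering of Corollary \ref{cor:iintcepto0} via \eqref{conv:c}; likewise no contradiction framing or monotonicity of $\io c$ is needed, since your estimate directly yields $\tfrac\kappa\mu\intttpe\io c\le\intttpe\io nc+o(1)\to0$ as $t\to\infty$.
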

\begin{proof}
 Integrating \eqref{eq:cep} shows that 
\[
 \io c_0 \geq \intnt\io \cep\frac{\ln(1+\eps\nep)}{\eps} \qquad \mbox{for any }t>0 \quad \mbox{ and any }ε>0, 
\]
which in light of \eqref{conv:c}, \eqref{conv:n} asserts that 
\[
 \io c_0 \geq \intnt\io nc \qquad \mbox{for all }t>0
\]
and hence in particular 
\begin{equation}
 \intttpe\!\!\!\io nc\to 0 \qquad \mbox{as }t\to\infty \label{eq:conv-inttpeionc}.
\end{equation}
Denoting the average value $\frac1{|\Om|}\io c(\cdot,t)$ of $c(\cdot,t)$ by $\cbar(\cdot,t)$ we observe that 
\begin{equation}\label{eq:splitiintnc}
 \intttpe\!\!\!\io nc=\intttpe\!\!\!\io n(c-\cbar)+\intttpe \cbar \io n \fat,
\end{equation}
where 
\[
 \intttpe\!\!\!\io n(c-\cbar) \leq \left(\intttpe\!\!\!\io n^2\right)^{\frac12} \left(\intttpe\!\!\!\io (c-\cbar)^2\right)^{\frac12} \leq k_1^{\frac12}\left(c_p\intttpe\!\!\!\io |\na c|^2\right)^{\frac12}
\]
with $c_p$, $k_1$ being constants obtained from Poincar\'e's inequality and Lemma \ref{lem:nleq} in combination with \eqref{conv:nl2}. We use $k_2$ to denote the positive lower bound for $\io n$, which is guaranteed to exist by Lemma \ref{lem:estimatesfromddtG} and \eqref{conv:n}. 
Since $\na c\in L^2(\Om\times(0,\infty))$ due to Lemma \ref{lem:intnac2}, $\intttpe\!\!\!\io n(c-\cbar)\to 0$ as $t\to\infty$ and taking \eqref{eq:conv-inttpeionc} and \eqref{eq:splitiintnc} into account, we see that 
\[
 0\leq \frac1{|\Om|} \intttpe\!\!\!\io c k_2 \leq \intttpe\cbar\io n\to 0\qquad \mbox{as } t\to\infty. \qedhere
\]
\end{proof}

We transfer this information back to the functions $\cep$:

\begin{corollary}\label{cor:iintcepto0}
 For any $\eta>0$ there is $T>0$ such that for all $t\ge T$ there is $ε_0>0$ such that for any $\eps\in(0,\eps_0)$
\[
 \intttpe \io \cep <\eta.
\]
\end{corollary}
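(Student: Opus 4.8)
The plan is to transfer the convergence $\intttpe\io c \to 0$ from Lemma \ref{lem:iintcto0} to the approximate solutions $\cep$ by exploiting the strong convergence $\cep\to c$ in $C^0_{loc}([0,\infty),L^p(\Om))$ established in \eqref{conv:c} of Proposition \ref{prop:ex-weaksol}. First I would fix $\eta>0$ and invoke Lemma \ref{lem:iintcto0} to obtain $T>0$ such that $\intttpe\io c < \frac{\eta}{2}$ for all $t>T$; strictly speaking, since the corollary asks for $t\ge T$, I would simply apply the lemma with a slightly smaller threshold or replace $T$ by $T+1$ so that the strict inequality at the endpoint is covered as well.

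Next, for each fixed $t\ge T$, I would compare $\intttpe\io\cep$ with $\intttpe\io c$. Using $\cep,c\ge 0$ and the (uniform in $\eps$, by Lemma \ref{lem:cbd}) $L^\infty$-bound on $\cep$, together with $|\Om|<\infty$, one has
\[
 \left|\intttpe\io \cep - \intttpe\io c\right| \le \intttpe\io |\cep-c| \le \int_t^{t+1}\norm[L^1(\Om)]{\cep(\cdot,s)-c(\cdot,s)}\,ds \le \sup_{s\in[t,t+1]}\norm[L^1(\Om)]{\cep(\cdot,s)-c(\cdot,s)}.
\]
By \eqref{conv:c} with $p=1$ (or any $p\in[1,6)$ combined with Hölder on the bounded domain), $\cep\to c$ in $C^0([t,t+1],L^1(\Om))$ as $\eps=\eps_j\downto 0$, so the right-hand side tends to $0$ along the sequence $(\eps_j)$; hence there is $\eps_0>0$ (depending on $t$ and $\eta$) such that for all $\eps\in(0,\eps_0)$ (meaning $\eps=\eps_j<\eps_0$) this difference is below $\frac{\eta}{2}$. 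Combining the two bounds gives $\intttpe\io\cep < \eta$, which is exactly the claim. Since $\cep$ and $c$ are nonnegative, the lower bound $0\le\intttpe\io\cep$ is immediate and could be recorded if one wants the two-sided statement matching the lemma.

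The only mild subtlety — and the place where one must be slightly careful rather than facing a genuine obstacle — is that the approximating sequence is the specific subsequence $(\eps_j)_{j\nat}$ fixed in Proposition \ref{prop:ex-weaksol}, so the phrase "for any $\eps\in(0,\eps_0)$" must be understood as ranging over members of that sequence; this is consistent with how the $\cep$ are used throughout Section \ref{sec:thm2}. One should also note that the dependence of $\eps_0$ on $t$ is genuine and harmless: the corollary only claims, for each $t$, the existence of such an $\eps_0$, which is all that later arguments (e.g. in the proof of Lemma \ref{lem:nepbd}) will need, since they fix $t$ first and then choose $\eps$ small.
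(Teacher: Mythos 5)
Your argument is correct and is exactly the paper's route: the paper's proof consists of the single remark that the claim ``directly results from Lemma \ref{lem:iintcto0} and \eqref{conv:c}'', and you have merely written out the triangle-inequality step $\left|\intttpe\io\cep-\intttpe\io c\right|\le\sup_{s\in[t,t+1]}\norm[L^1(\Om)]{\cep(\cdot,s)-c(\cdot,s)}$ that makes this transfer explicit. Your remark that $\eps$ ranges over the approximating sequence from Proposition \ref{prop:ex-weaksol} and that $\eps_0$ may depend on $t$ is likewise consistent with how the paper states and uses the corollary.
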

\begin{proof} 
 This directly results from Lemma \ref{lem:iintcto0} and \eqref{conv:c}.
\end{proof}

\begin{lemma}\label{lem:clinto0}
 For any $\eta>0$ there are $T>0$ and $\eps_0>0$ such that for every $t>T$ and every $\eps\in(0,\eps_0)$ we have 
\[
 \norm[L^\infty(\Om)]{\cep(\cdot,t)}<\eta.
\]
\end{lemma}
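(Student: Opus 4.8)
\textbf{Proof strategy for Lemma \ref{lem:clinto0}.}
The plan is to combine the eventual smallness of $\intttpe\io\cep$ provided by Corollary \ref{cor:iintcepto0} with a smoothing property of the Neumann heat semigroup applied to \eqref{eq:cep}, exactly in the spirit of \cite[Sec.\ 4]{wk_ns_oxytaxis}. Since $\cep$ solves
\[
 \cept = \Lap\cep - \uep\cdot\na\cep - \cep\tfrac1\eps\ln(1+\eps\nep)
\]
with homogeneous Neumann boundary data, and since the last term is nonnegative (as $\cep\ge0$), we may drop it and estimate $\cep$ from above by the solution of the corresponding problem without that term. Using that $\na\cdot\uep=0$, the transport term can be written in divergence form, so for $t_0<t$ a Duhamel representation gives
\[
 \cep(t) = e^{(t-t_0)\Lap}\cep(t_0) - \intnt\!\! e^{(t-s)\Lap}\na\cdot(\uep(s)\cep(s))\,\mathrm ds + (\text{nonpositive term}),
\]
where in a minor abuse of notation the lower limit is $t_0$. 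I would bound the first summand by the $L^1$-$L^\infty$ smoothing estimate $\norm[\Liom]{e^{\sigma\Lap}\phii}\le C\sigma^{-3/2}\norm[L^1(\Om)]{\phii}$ (valid for $\sigma\in(0,1]$, up to the mass-preserving part), and the second by $\norm[\Liom]{e^{\sigma\Lap}\na\cdot w}\le C\sigma^{-1/2-3/(2p)}\norm[\Lom p]{w}$ with a suitable $p$.

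First I would fix $\eta>0$ and, applying Corollary \ref{cor:iintcepto0} with a smaller threshold $\tilde\eta=\tilde\eta(\eta)$ to be chosen, obtain $T_1>0$ so that $\intttpe\io\cep<\tilde\eta$ for all $t\ge T_1$ and $\eps$ small. From this time-integral smallness and $\cep\ge0$ one gets, for every $t\ge T_1$, some $t_0=t_0(t)\in[t-1,t)$ (in fact $t_0\in(t-1,t-\tfrac12)$) with $\io\cep(t_0)<2\tilde\eta$; feeding this into the first Duhamel term and integrating the singularity $(t-t_0)^{-3/2}$ is too crude, so instead I would split $[t-1,t]$ and only invoke the semigroup on a \emph{short} interval of length $\le\tfrac12$ starting from such a good time $t_0$, combined with the already-known uniform-in-$\eps$ bound $\norm[\Liom]{\cep}\le\norm[\Liom]{c_0}$ from Lemma \ref{lem:cbd} on the complementary (bounded-length) piece. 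Concretely, picking $t_0\in(t-1,t-\tfrac12)$ with $\io\cep(t_0)$ small, for $s\in[t_0,t]$ one has $t-t_0\in(\tfrac12,1)$, so the kernel factor $(t-t_0)^{-3/2}$ is bounded by the constant $2^{3/2}$, and the first term is $\le C\io\cep(t_0)<C\cdot2\tilde\eta$.

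For the transport term I would use $\intttpe\io|\na\uep|^2\le C$ from Lemma \ref{lem:allthebounds} together with $\norm[\Lom6]{\uep}\le C\norm[\Lom2]{\na\uep}$ and the already-available $L^\infty$ bound on $\cep$ to control $\uep\cep$ in, say, $L^2((t_0,t),\Lom6)$ uniformly in $\eps$; then $\norm[\Liom]{\cep(t)-e^{(t-t_0)\Lap}\cep(t_0)}\le C\int_{t_0}^t(t-s)^{-1/2-1/4}\norm[\Lom6]{\uep(s)\cep(s)}\,\mathrm ds$, and by Hölder in time this is bounded by $C(\int_{t_0}^t(t-s)^{-3/2}\,\mathrm ds)^{1/2}(\int_{t_0}^t\norm[\Lom6]{\uep\cep}^2)^{1/2}$ --- but $(t-s)^{-3/2}$ is not integrable near $s=t$, so I must instead distribute the time-weight more carefully, e.g.\ iterate the estimate on dyadic subintervals approaching $t$, or bootstrap: first prove a uniform bound $\norm[\LT q{\Liom}]{\cep}\le C$ for some finite $q$ by an $L^p$-$L^\infty$ estimate, then upgrade to $L^\infty$ in time using the consumption term's sign once more. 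This is the technical heart and, I expect, the main obstacle: making the heat-semigroup smoothing quantitative and genuinely uniform in $\eps$ while matching exponents so that both the ``initial-slice'' contribution and the ``transport'' contribution are each $<\eta/2$ for $t$ large and $\eps$ small.

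Finally, collecting the two estimates, for $t\ge T:=T_1+1$ and $\eps<\eps_0$ one obtains $\norm[\Liom]{\cep(t)}\le C_1\tilde\eta+C_2(\eta)\cdot o(1)_{t\to\infty,\ \eps\to0}$, and choosing $\tilde\eta:=\eta/(2C_1)$ and then $T$ larger and $\eps_0$ smaller to absorb the second term below $\eta/2$ gives the claim. I would remark that the dependence ``$\eps_0$ after $t$'' (rather than uniformly) is harmless here and is exactly the form in which Corollary \ref{cor:iintcepto0} supplies the input, which is all that is needed in the later applications in Section \ref{sec:thm2}.
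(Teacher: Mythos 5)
There is a genuine gap, on two levels. First, the technical core of your argument is left open: as you yourself note, with the exponents you chose the transport term $\int_{t_0}^t e^{(t-s)\Delta}\nabla\cdot(\uep\cep)\,ds$ produces the non-integrable weight $(t-s)^{-3/2}$ after H\"older in time, and the proposed remedies (dyadic iteration, bootstrap through some $L^q$-in-time bound) are only sketched, not carried out; so the estimate that is supposed to make the second Duhamel contribution smaller than $\eta/2$ is never actually established. Second, and more fundamentally, the quantifier structure of your conclusion is weaker than the lemma asserts and is not ``harmless'': since you invoke Corollary \ref{cor:iintcepto0} at a good time $t_0(t)$ for \emph{each} $t$, the resulting threshold $\eps_0$ depends on $t$, whereas the lemma claims a single $\eps_0$ valid for all $t>T$. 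This uniformity is exactly what is needed later: in Lemma \ref{lem:nepbd} one fixes $T$ and $\eps_0$ so that $\norm[\Liom]{\cep(\cdot,t)}\le\frac{\eta}{2}$ holds for \emph{all} $t>T$ with the \emph{same} $\eps$, because the ODI for $y_\eps$ must be valid on the whole unbounded interval $(T,\infty)$; a $t$-dependent $\eps_0$ would not permit this.

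The key idea you are missing — and which makes the semigroup machinery unnecessary — is the monotonicity of $t\mapsto\norm[\Liom]{\cep(t)}$ recorded in Lemma \ref{lem:cbd}. Because of it, one only needs to produce a \emph{single} time $t_0$ (in a fixed window $[T_0,T_0+1]$, hence with one application of Corollary \ref{cor:iintcepto0} and one $\eps_0$, uniform in $t$) at which $\norm[\Liom]{\cep(t_0)}\le\eta$; the bound then persists for all later times. The paper finds such a $t_0$ by interpolation rather than Duhamel: the Gagliardo--Nirenberg inequality
\[
 \norm[\Liom]{\cep}\le k_1\left(\norm[\Lom4]{\na\cep}^{\frac{12}{13}}\norm[\Lom1]{\cep}^{\frac1{13}}+\norm[\Lom1]{\cep}\right)
\]
together with the uniform space-time bound $\intttpe\io|\na\cep|^4\le k_2$ from Lemma \ref{lem:allthebounds} and the smallness of $\int_{T_0}^{T_0+1}\io\cep$ shows that $\int_{T_0}^{T_0+1}\norm[\Liom]{\cep}\le\eta$, whence some $t_0\in[T_0,T_0+1]$ with $\norm[\Liom]{\cep(t_0)}\le\eta$ exists. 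Your overall strategy (smallness of the time integral of $\io\cep$ plus a regularization mechanism) points in the right direction, but without the monotonicity step it neither closes technically nor yields the $\eps$-uniformity that the statement and its subsequent applications require.
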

\begin{proof}
 The \GNI\ asserts the existence of $k_1>0$ such that 
\begin{equation}\label{eq:GNI-clinto0} 
 \norm[\Liom]{\phi}\leq k_1\left(\norm[L^4(\Om)]{\na \phi}^{\frac{12}{13}}\norm[L^1(\Om)]{\phi}^{\frac1{13}}+\norm[L^1(\Om)]{\phi}\right) \qquad \mbox{for all } \phi\in W^{1,4}(\Om)
\end{equation}
and according to Lemma \ref{lem:allthebounds} there is $k_2>0$ such that 
\begin{equation}\label{eq:linto0-nacep4}
 \intttpe\!\!\!\io |\na\cep|^4\leq k_2 \qquad \mbox{for all } t>0, \epscond.
\end{equation}
Let $\eta>0$. Let $\delta>0$ be such that $k_1k_2^{\frac3{52}}\delta^{\frac1{13}}+k_1\delta<\eta$. 
Due to Corollary \ref{cor:iintcepto0} there are $T_0>0$ and $\eps_0>0$ such that for every $\eps\in(0,\eps_0)$ we have $\int_{T_0}^{T_0+1}\!\!\io \cep<\delta$. 
Invoking \eqref{eq:GNI-clinto0} and \eqref{eq:linto0-nacep4}, we see that 
\begin{align*}
 \int_{T_0}^{T_0+1}\norm[\Liom]{\cep}\leq& k_1\int_{T_0}^{T_0+1}\norm[\Lom4]{\na\cep}^{\frac{12}{13}}\norm[\Lom1]{\cep}^{\frac1{13}}+k_1 \int_{T_0}^{T_0+1} \norm[\Lom1]{\cep}\\
 \leq& k_1\left(\int_{T_0}^{T_0+1}\norm[\Lom4]{\na\cep}^4\right)^{\frac3{13}}\left(\int_{T_0}^{T_0+1}\norm[\Lom1]{\cep}\right)^{\frac1{13}}\cdot 1^{\frac9{13}} + k_1\delta\\
 \leq& k_1 k_2^{\frac3{52}}\delta^{\frac1{13}}+k_1\delta\le \eta \qquad \mbox{for any } \eps\in(0,\eps_0). 
\end{align*}
In particular, for every $\eps\in(0,\eps_0)$, there is at least one $t_0\in[T_0,T_0+1]$ such that $\norm[\Lom\infty]{\cep(\cdot,t_0)}\leq \eta$ and thus, due to monotonicity of $\cep$ (Lemma \ref{lem:cbd}), for all $\eps\in(0,\eps_0)$ and all $t>T:=T_0+1$, 
\[
 \norm[\Lom\infty]{\cep(\cdot,t)}\le \eta. \qedhere
\]
\end{proof}

\begin{corollary}\label{cor:ctozero}
 The function $c$ obtained in Proposition \ref{prop:ex-weaksol} satisfies 
\[
 \norm[\Lom{\infty}]{c(\cdot,t)}\to 0. 
\]
\end{corollary}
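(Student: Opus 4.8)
The statement to prove is Corollary~\ref{cor:ctozero}: the weak-solution component $c$ from Proposition~\ref{prop:ex-weaksol} satisfies $\norm[\Lom\infty]{c(\cdot,t)}\to 0$ as $t\to\infty$.

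The plan is to transfer the uniform-in-$\eps$ smallness of $\cep$ established in Lemma~\ref{lem:clinto0} to the limit function $c$ by using the convergence $\cep\to c$ from Proposition~\ref{prop:ex-weaksol}. Concretely, fix $\eta>0$. By Lemma~\ref{lem:clinto0} there are $T>0$ and $\eps_0>0$ such that $\norm[\Lom\infty]{\cep(\cdot,t)}<\eta$ for all $t>T$ and all $\eps\in(0,\eps_0)$. I would like to conclude that $\norm[\Lom\infty]{c(\cdot,t)}\le \eta$ for all $t>T$, which (since $\eta>0$ was arbitrary) gives the claim. The only subtlety is extracting a pointwise (or $L^\infty$) bound on $c$ from an $L^\infty$-bound on $\cep$ along the sequence $\eps_j\downto 0$: weak-$*$ convergence in $L^\infty$ preserves $L^\infty$-bounds, and indeed \eqref{conv:cinfty} asserts $\cep\weakstarto c$ in $L^\infty(\Om\times(t,t+1))$ for every $t\ge 0$. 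Since $\limsup_{\eps_j\to0}\norm[L^\infty(\Om\times(t,t+1))]{\cep}\le\eta$ for $t>T$, lower semicontinuity of the norm under weak-$*$ convergence yields $\norm[L^\infty(\Om\times(t,t+1))]{c}\le\eta$, hence $\norm[\Lom\infty]{c(\cdot,s)}\le\eta$ for a.e.\ $s\in(t,t+1)$, and letting $t$ range over $(T,\infty)$ gives $\norm[\Lom\infty]{c(\cdot,s)}\le\eta$ for a.e.\ $s>T$.

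Alternatively, and perhaps more cleanly, one can avoid the weak-$*$ argument entirely: by \eqref{conv:c} we have $\cep\to c$ in $C^0_{loc}([0,\infty),L^p(\Om))$ for every $p\in[1,6)$, so for every fixed $t$, $\cep(\cdot,t)\to c(\cdot,t)$ in $L^p(\Om)$ along $\eps_j\downto0$, and hence (along a further subsequence) a.e.\ in $\Om$. Since $\norm[\Lom\infty]{\cep(\cdot,t)}<\eta$ for all small $\eps$ once $t>T$, the a.e.\ limit satisfies $|c(x,t)|\le\eta$ for a.e.\ $x\in\Om$, i.e.\ $\norm[\Lom\infty]{c(\cdot,t)}\le\eta$. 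Taking $\eta\to0$ along a sequence $\eta_m\downto0$ with associated thresholds $T_m$ produces the decay; formally, for every $\eta>0$ there is $T$ with $\norm[\Lom\infty]{c(\cdot,t)}\le\eta$ for all $t>T$, which is exactly the definition of $\norm[\Lom\infty]{c(\cdot,t)}\to0$.

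I do not anticipate a serious obstacle here: the corollary is essentially a bookkeeping consequence of Lemma~\ref{lem:clinto0} together with the convergence modes recorded in Proposition~\ref{prop:ex-weaksol}. The one point requiring a little care is the logical structure of Lemma~\ref{lem:clinto0}: there the threshold $\eps_0$ is allowed to depend on $\eta$ (and on $T$), so when passing to the limit one must first fix $\eta$, obtain $T$ and $\eps_0$, and only then send $\eps_j\downto0$ (which eventually enters $(0,\eps_0)$); one cannot interchange the order. As long as this is respected, the argument is immediate and the proof is no more than a few lines.
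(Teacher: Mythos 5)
Your first argument is exactly the paper's proof: the corollary is obtained by combining Lemma \ref{lem:clinto0} with the weak-$*$ convergence \eqref{conv:cinfty} and lower semicontinuity of the $L^\infty$ norm, with the quantifier order handled correctly. Your alternative route via \eqref{conv:c} and a.e.\ convergence is also fine, but the main argument matches the paper, so nothing further is needed.
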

\begin{proof}
 Combining Lemma \ref{lem:clinto0} with \eqref{conv:cinfty} this convergence statement results immediately.
\end{proof}

\subsection{Boundedness of $n$}\label{sec:bdnessofn}
In obtaining eventual smoothness and convergence of the solutions constructed in Proposition \ref{prop:ex-weaksol}, we will heavily rely on estimates for higher norms of $n$. 
We can achieve those for large times in Lemma \ref{lem:nepbd} and prepare this by deriving a differential inequality for $y_{\eps}(t):=	\io \frac{\nep^p}{(η-\cep)^\theta}$, which holds for small values of $\cep$. Fortunately, we already have established that $\norm[L^\infty(\Om)]{\cep(t)}$ converges to $0$. 

The same quantity has proven useful in the derivation of estimates for $\norm[L^p(\Om)]{n(t)}$ for large $t$ already in \cite[Sec. 5]{wk_arma} 
and \cite[Sec. 5]{wk_ns_oxytaxis}. Note, however, that there (that is: in the setting without logistic source) the analogue of \eqref{eq:comparewithearlier} below would read 
\[
 y_{ε}' + \left(\frac{[2p\theta+\chi p(p-1)\eta]^2}{4(\theta(1+\theta)-\chi p \theta \eta)} - p(p-1) \right)\io \frac{\nep^{p-2}|\na \nep|^2}{(\eta-\cep)^\theta} \leq 0, 
\]
so that the right hand side already equals zero, and hence at the same time bounds on $\intttpe\!\!\!\io \frac{\nep^{p-2}|\na \nep|^2}{(\eta-\cep)^\theta} $ could be obtained.

The fact that $y$ is defined on intervals $(T,∞)$ for large $T$ only, raises the problem that the initial values $y_\eps(T)$ are unknown and differ for varying $ε$. 
Fortunately, the nonlinear absorptive term allows for comparison with solutions 'starting from initial data $∞$' (see \eqref{eq:uppersoln}), so that the bound on $y_\eps$ does not depend on $\io \nep^p(T)$ and hence not on $\eps$.

Also, it is important to note that $T$ may (and will) depend on $p$, but is independent of $\eps$ due to the uniformity of the decay of $\cep$ asserted by Lemma \ref{lem:clinto0}. This will be decisive when transferring the bounds on $\norm[\Lom p]{\nep}$ to $\norm[\Lom p]{n}$.

\begin{lemma}\label{lem:nepbd}
For any $p∈(1,∞)$ there are $T^\star>0$, $ε_0>0$ and $C>0$ such that 
\[
 \io \nep^p(\cdot,t)\leq C 
\]
for all $t>T^\star$ and all $\eps\in(0,ε_0)$.
\end{lemma}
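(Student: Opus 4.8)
The plan is to test the $n$-equation \eqref{eq:nep} against $\frac{p\nep^{p-1}}{(\eta-\cep)^\theta}$ for suitably chosen parameters $\eta>0$ (a small upper bound for $\cep$) and $\theta>0$, and to derive a differential inequality for $y_\eps(t):=\io\frac{\nep^p}{(\eta-\cep)^\theta}$ of the form $y_\eps'\leq a-by_\eps^{1+\frac1{p}}$ (or $y_\eps'\le a-by_\eps$ after absorbing a bad term), valid on $(T^\star,\infty)$ for a time $T^\star$ large enough that Lemma \ref{lem:clinto0} forces $\norm[\Liom]{\cep(\cdot,t)}<\frac\eta2$ for all $t>T^\star$ and all $\eps$ below some $\eps_0$. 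The crucial structural point — and the reason the functional uses the weight $(\eta-\cep)^{-\theta}$ rather than a plain power — is that upon differentiating, the chemotactic term $-\chi\na\cdot\big(\tfrac{\nep}{1+\eps\nep}\na\cep\big)$ produces a contribution that can be played off against the term coming from $\del_t(\eta-\cep)^{-\theta}=\theta(\eta-\cep)^{-\theta-1}\cept$; after using \eqref{eq:cep} to substitute $\cept=\Lap\cep-\cep\frac1\eps\ln(1+\eps\nep)-\uep\cdot\na\cep$ and integrating by parts, the resulting quadratic form in $\na\nep$ and $\na\cep$ has the good sign provided $\theta(1+\theta)-\chi p\theta\eta>0$ and the discriminant-type condition
\[
 \frac{[2p\theta+\chi p(p-1)\eta]^2}{4(\theta(1+\theta)-\chi p\theta\eta)}<p(p-1)
\]
holds; this can be arranged by first fixing $\theta$ close to $p-1$ and then taking $\eta$ small, exactly as in the remark preceding the lemma where the analogous computation \eqref{eq:comparewithearlier} is displayed.

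Concretely, I would proceed as follows. First, choose $\theta\in(0,p-1)$ (for $p>1$; the case $p\le 1$ of the statement is vacuous since we need $p>1$ anyway — actually the statement says $p\in(1,\infty)$, good) and $\eta>0$ small so that the two conditions above hold, then invoke Lemma \ref{lem:clinto0} to obtain $T^\star$ and $\eps_0$ with $\cep(\cdot,t)\le\frac\eta2$ on $(T^\star,\infty)$ for $\eps<\eps_0$; this guarantees $\eta-\cep\in[\frac\eta2,\eta]$ so the weight is bounded above and below and all the integrations by parts are legitimate (with vanishing boundary terms, since $\delny\nep=\delny\cep=0$). Second, compute $y_\eps'$, substitute the PDEs, integrate by parts, and handle the transport terms: the convective term $\uep\cdot\na\nep$ from the $n$-equation and the $\uep\cdot\na\cep$ piece coming through $\cept$ combine — using $\na\cdot\uep=0$ — into terms that either vanish or are controlled; the logistic term contributes $\kappa p\, y_\eps$ (harmless, linear) minus $\mu p\io\frac{\nep^{p+1}}{(\eta-\cep)^\theta}$, and the consumption term $-\cep\frac1\eps\ln(1+\eps\nep)$ entering through $\cept$ has the favourable sign (as already exploited in Lemma \ref{lem:ddtlog}). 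Third, after the gradient terms are absorbed, bound the surviving dissipation-free part: by Hölder/Young applied to $y_\eps=\io\frac{\nep^p}{(\eta-\cep)^\theta}\le(\tfrac2\eta)^\theta\io\nep^p\le C\big(\io\frac{\nep^{p+1}}{(\eta-\cep)^\theta}\big)^{\frac{p}{p+1}}$, one gets $\io\frac{\nep^{p+1}}{(\eta-\cep)^\theta}\ge c\,y_\eps^{\frac{p+1}{p}}$, yielding an ODI
\[
 y_\eps'(t)\le C_1 y_\eps(t)-C_2\, y_\eps(t)^{1+\frac1p}\onni
\]
on $(T^\star,\infty)$, whose right-hand side is negative once $y_\eps$ exceeds a threshold $M_p$ depending only on $C_1,C_2$ (hence on $p$) but not on $\eps$ nor on the unknown $y_\eps(T^\star)$; thus $y_\eps(t)\le\max\{y_\eps(T^\star),M_p\}$, and — here the comparison with ``the solution starting from $\infty$'' kicks in — since the superlinear absorption dominates, actually $\limsup_{t\to\infty}y_\eps(t)\le M_p$, and more usefully for a uniform bound one shows $y_\eps(t)\le M_p+$ (something small) for $t>T^\star+1$, say, by the standard argument that an upper solution $\overline y$ of $\overline y'=C_1\overline y-C_2\overline y^{1+1/p}$ with $\overline y(T^\star)=\infty$ is finite for all $t>T^\star$ and bounded on $[T^\star+1,\infty)$ independently of the data. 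Finally, $\io\nep^p(\cdot,t)\le(\tfrac2\eta)^{\theta}\cdot(\tfrac\eta2)^{\cdots}$ — more precisely $\io\nep^p\le\eta^\theta y_\eps\le C$ — gives the claim with $C$ and $T^\star:=T^\star+1$, $\eps_0$ as produced.

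The main obstacle I anticipate is the bookkeeping of the quadratic form in the gradient terms: after differentiating $y_\eps$ and substituting, one collects a term $-p(p-1)\io\frac{\nep^{p-2}|\na\nep|^2}{(\eta-\cep)^\theta}$ from the diffusion, a cross term $\sim\io\frac{\nep^{p-1}\na\nep\cdot\na\cep}{(\eta-\cep)^{\theta+1}}$ (with coefficient assembled from both the chemotaxis term and the $\Lap\cep$ part of $\cept$, whence the $2p\theta+\chi p(p-1)\eta$), and a term $-\theta(1+\theta)\io\frac{\nep^p|\na\cep|^2}{(\eta-\cep)^{\theta+2}}$ plus a $+\chi p\theta\eta$-type correction; verifying that this form is negative semidefinite under the stated smallness of $\eta$ is the delicate algebra, and one must be careful that the factor $\frac{\nep}{1+\eps\nep}\le\nep$ appearing in the chemotactic flux — rather than $\nep$ itself — does not spoil the sign (it does not, because the discrepancy has the right sign after using $\frac{\nep}{(1+\eps\nep)\nep}\le1$, exactly as in the proof of Lemma \ref{lem:ddtlog}). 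A secondary, more technical point is the $\eps$-independence of $T^\star$: this is not automatic and rests entirely on the \emph{uniform} decay of $\cep$ from Lemma \ref{lem:clinto0}, without which the whole $\eps\to0$ transfer of the $L^p$-bound to $n$ in later lemmata would break down.
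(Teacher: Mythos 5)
Your proposal is correct and follows essentially the same route as the paper: the weighted functional $y_\eps=\io\frac{\nep^p}{(\eta-\cep)^\theta}$ with $\eta,\theta$ chosen so that the cross terms satisfy the discriminant condition $[2p\theta+\chi p(p-1)\eta]^2<4p(p-1)\theta(1+\theta-\chi p\eta)$, the $\eps$-uniform smallness of $\cep$ from Lemma \ref{lem:clinto0} to fix $T^\star$ and $\eps_0$, the ODI $y_\eps'\le \kappa p\,y_\eps-k_1 y_\eps^{1+\frac1p}$, and comparison with the supersolution emanating from infinite initial data to obtain a bound independent of $y_\eps(T^\star)$ and of $\eps$, then $\io\nep^p\le\eta^\theta y_\eps$. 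The only cosmetic difference is the parameter selection (you fix $\theta\in(0,p-1)$ and then shrink $\eta$, while the paper first takes $\theta$ small), but both choices satisfy the same sign condition.
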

\begin{proof}
Let $p> 1$.
First fix $\theta>0$ so small that 
\[
 4p^2\theta+4p^2\theta\chi(p-1)+\chi^2p^2(p-1)^2\theta < 2 p(p-1)
\]
and let $0<\eta<\min\set{1,\theta,\frac1{2p\chi}}$. Then 
\[
 4p^2\theta+4p^2\theta\chi(p-1)\frac{\eta}{\theta} +\chi^2p^2(p-1)^2\theta\frac{\eta^2}{\theta^2}<4p(p-1)[1+\theta-\chi p\eta]
\]
and hence 
\[
 4p^2\theta^2+4p^2\theta\chi(p-1)\eta+\chi^2p^2(p-1)^2 \eta^2<4p(p-1)\theta[1+\theta-\chi p\eta],
\]
that is
\begin{equation}\label{eq:reasonforchoicethetaeta}
 (2p\theta+\chi p(p-1)\eta)^2<p(p-1)4\theta(1+\theta-\chi p\eta).
\end{equation}

We use Lemma \ref{lem:clinto0} to fix $T>0$ and $ε_0>0$ such that for any $\eps\in(0,ε_0)$, $t>T$, we have 
\begin{equation}\label{eq:csmall}
 \norm[L^\infty(\Om)]{\cep(\cdot,t)}\leq \frac\eta2.
\end{equation}

Let $\eps\in(0,ε_0)$.
Then 
\[
 y_{ε}(t):=\io \frac{\nep^p}{(\eta-\cep)^\theta}, \qquad t\ge T, 
\]
is well-defined and we can compute

\begin{align}\label{eq:ddtionpetactheta}
 \ddt \io \frac{\nep^p}{(\eta-\cep)^\theta} = & p\io \frac{\nep^{p-1}\nept}{(\eta-\cep)^\theta} + \theta\io \frac{\nep^p}{(\eta-\cep)^{1+\theta}}\cept\nn\\
 =& p\io \frac{\nep^{p-1}\Delta \nep}{(\eta-\cep)^\theta} - \chi p\io \frac{\nep^{p-1}\na•(\frac{\nep}{1+\eps \nep}\na \cep)}{(\eta-\cep)^\theta} - p\io \frac{\nep^{p-1} \uep\cdot \na \nep}{(\eta-\cep)^\theta} \nn\\
& +p\kappa\io \frac{\nep^p}{(\eta-\cep)^\theta} - p\mu\io \frac{\nep^{p+1}}{(\eta-\cep)^\theta}\nn\\
 & + \theta \io \frac{\nep^p\Delta \cep}{(\eta-\cep)^{\theta+1}} - \theta \io \frac{\nep^p\frac{\cep}\eps \ln(1+\eps \nep)}{(\eta-\cep)^{1+\theta}} - \theta\io \frac{\nep^p \uep\cdot \na \cep}{(\eta-\cep)^{1+\theta}}, \qquad \mbox{on } (T,\infty). 
\end{align}
Here we use that, since $\uep$ is divergence-free, we have 
\[
 -p\io \frac{\nep^{p-1}\uep\cdot \na \nep}{(\eta-\cep)^\theta} - \theta \io \frac{\nep^p \uep\cdot \na \cep}{(\eta-\cep)^{1+\theta}} = - \io \uep\cdot \na \kl{\frac{\nep^p}{(\eta-\cep)^\theta}} = 0 \qquad \mbox{ on } (T,\infty).
\]
Furthermore, employing H\"older's inequality, we estimate 
\[
 \io \frac{\nep^p}{(\eta-\cep)^\theta} \leq \left(\io \frac{1^{p+1}}{(\eta-\cep)^\theta}\right)^{\frac1{p+1}} \left(\io \frac{\nep^{p+1}}{(\eta-\cep)^\theta}\right)^{\frac p{p+1}} \leq \left(\frac{2^\theta|\Om|}{\eta^\theta}\right)^{\frac1{p+1}} \left(\io\frac{\nep^{p+1}}{(\eta-\cep)^\theta}\right)^{\frac{p}{p+1}} \qquad \mbox{on } (T,\infty)
\]
by \eqref{eq:csmall}, 
so that 
\[
 -\mu p\io \frac{\nep^{p+1}}{(\eta-\cep)^\theta}\leq -p\mu \left(\frac{\eta^\theta}{2^\theta|\Om|}\right)^{\frac1p} \left(\io \frac{\nep^p}{(η-\cep)^\theta}\right)^{1+\frac1p}=:-k_1 y_{ε}^{1+\frac1p} \qquad \mbox{on } (T,\infty).
\]

We infer from \eqref{eq:ddtionpetactheta} by integration by parts that 
\begin{align*}
 y_{ε}'&\leq  -p(p-1)\io \frac{\nep^{p-2}|\na \nep|^2}{(\eta-\cep)^\theta} - p\theta \io \frac{\nep^{p-1}\na \nep•\na \cep}{(\eta-\cep)^{\theta+1}} + \chi p(p-1)\io \frac{\nep^{p-1}\na \nep\cdot \na \cep}{(1+\eps \nep)(\eta-\cep)^\theta}\\ 
 &+\chi p\theta \io \frac{\nep^p|\na \cep|^2}{(1+\eps \nep)(\eta-\cep)^{1+\theta}} + \kappa p y_{ε} - k_1  y_{ε}^{1+\frac1p}\\
 & -\theta p\io \frac{\nep^{p-1}\na \nep\cdot \na \cep}{(\eta-\cep)^{1+\theta} } - \theta (1+\theta)\io \frac{\nep^p|\na \cep|^2}{(\eta-\cep)^{2+\theta}} - \theta \io \frac{\nep^p\frac \cep\eps\ln(1+\eps \nep)}{(\eta-\cep)^{1+\theta}} \qquad \mbox{ on } (T,\infty).
\end{align*}
 
We use that $\frac{1}{1+\eps \nep}\leq 1$ and (by \eqref{eq:csmall}) $1\leq \frac{\eta}{\eta-\cep}\leq 2$ as well as nonpositivity of the last term and get
\begin{align*}
 y_{ε}'\leq& -p(p-1)\io \frac{\nep^{p-2}|\na \nep|^2}{(\eta-\cep)^\theta} - [\theta(1+\theta)-\chi p\theta\eta] \io \frac{\nep^p|\na \cep|^2}{(\eta-\cep)^{2+\theta}}\\
 &+[p\theta+\chi p(p-1)\eta +\theta p] \io \frac{\nep^{p-1}|\na \nep||\na \cep|}{(\eta-\cep)^{1+\theta}} + \kappa p y_{ε} - k_1 y_{ε}^{1+\frac1p}, \qquad \mbox{ on } (T,\infty),
\end{align*}
where an application of Young's inequality reveals that for any $t>T$
\begin{align*}
 [2p\theta&+χp(p-1)η]\io \frac{\nep^{p-1}(t)|\na\nep(t)||\na\cep(t)|}{(η-\cep(t))^{1+\theta}}\\
 &\leq \frac{(2p\theta+χp(p-1)η)^2}{4\theta(1+\theta)-\chi p\thetaη} \io \frac{\nep^{p-2}(t)|\na\nep(t)|^2}{(η-\cep)^{1+\theta}}+(\theta(1+\theta)-χp\thetaη)\io\frac{\nep^p(t)|\na\cep(t)|^2}{(η-\cep)^{2+\theta}}
\end{align*}
and thereby leads to 
\begin{align}
 y_{ε}' \leq& \left(\frac{[2p\theta+\chi p(p-1)\eta]^2}{4(\theta(1+\theta)-\chi p \theta \eta)} - p(p-1) \right)\io \frac{\nep^{p-2}|\na \nep|^2}{(\eta-\cep)^\theta} + \kappa p y_{ε} -k_1 y_{ε} ^{1+\frac1p}\nn\\
 \leq& \kappa p y_{ε} -k_1y_{ε}^{1+\frac1p} \qquad \mbox{on } (T,\infty)\label{eq:comparewithearlier}
\end{align}
by \eqref{eq:reasonforchoicethetaeta}. 
Because 
\begin{align}\label{eq:uppersoln}
 z(t):=&\left[\left(\frac{1}{y(T)^{\frac1p}}-\frac{k_1}{\kappa p}\right)e^{-\kappa(t-T)}+\frac{k_1}{\kappa p}\right]^{-p}
 \leq \left[\frac{k_1}{\kappa p}\left(1-e^{-\kappa(t-T)}\right)\right]^{-p}, \qquad t>T,
\end{align}
solves $z'=\kappa p z-k_1 z^{1+\frac1p}$, $z(T)=y_{ε}(T)$, by the usual ODE comparison argument we infer $y_{ε}\leq z$ on $(T,\infty)$ and thus 
\[
 \io \nep^p\leq \io (\eta-\cep)^\theta \frac{\nep^p}{(\eta-\cep)^\theta}\leq \eta^\theta\io\frac{\nep^p}{(\eta-\cep)^\theta} = \eta^\theta y_{ε}(t) \leq \eta^\theta\left[\frac{k_1}{\kappa p}(1-e^{-\kappa})\right]^{-p}.
\]
for $t>T^\star:=T+1$.
\end{proof}

One particular consequence of this bound is the following:

\begin{lemma}\label{lem:intintnp}
 For any $p>1$ and any $δ>0$ there is $T>0$ such that for any $t>T$ there is $ε_0>0$ such that for any $ε∈(0,ε_0)$ 
\[
 \int_t^{t+3} \norm[\Lom p]{\nep-\frac{κ}{μ}} < δ.
\]
\end{lemma}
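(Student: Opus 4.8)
The plan is to combine the uniform $L^p$-bound from Lemma \ref{lem:nepbd} with the spacetime estimate \eqref{eq:iinminusgwqleq} from Lemma \ref{lem:estimatesfromddtG}, interpolating between them to produce an $L^1$-in-time decay of $\norm[\Lom p]{\nep-\frac\kappa\mu}$. Concretely, fix $p>1$ and $\delta>0$. First I would apply Lemma \ref{lem:nepbd} with some larger exponent, say $2p$ (or indeed any exponent strictly above $p$): this supplies $T^\star>0$, $\eps_0>0$ and $C>0$ with $\io\nep^{2p}(\cdot,t)\le C$ for all $t>T^\star$ and all $\eps\in(0,\eps_0)$; since $\frac\kappa\mu$ is a fixed constant and $\Om$ is bounded, this also yields a uniform bound $\norm[\Lom{2p}]{\nep(\cdot,t)-\frac\kappa\mu}\le C_1$ on the same range. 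The key point is that $T^\star$ and $\eps_0$ here do not depend on $\eps$, courtesy of the uniformity of the decay of $\cep$ used in Lemma \ref{lem:nepbd}.

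Next I would interpolate: by Hölder's inequality in space, $\norm[\Lom p]{w}\le \norm[\Lom 1]{w}^{\lambda}\norm[\Lom{2p}]{w}^{1-\lambda}$ for the appropriate $\lambda\in(0,1)$ determined by $\frac1p=\lambda+\frac{1-\lambda}{2p}$, applied to $w=\nep(\cdot,t)-\frac\kappa\mu$. Combining with the uniform $L^{2p}$-bound gives $\norm[\Lom p]{\nep(\cdot,t)-\frac\kappa\mu}\le C_1^{1-\lambda}\norm[\Lom 1]{\nep(\cdot,t)-\frac\kappa\mu}^{\lambda}$, and then, bounding the $L^1$-norm by the $L^2$-norm (again using $|\Om|<\infty$), $\norm[\Lom p]{\nep(\cdot,t)-\frac\kappa\mu}\le C_2\,\norm[\Lom 2]{\nep(\cdot,t)-\frac\kappa\mu}^{\lambda}$ for $t>T^\star$. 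Integrating over $(t,t+3)$ and using Hölder's inequality in time with exponents $\frac2\lambda$ and its conjugate,
\[
 \int_t^{t+3}\norm[\Lom p]{\nep-\tfrac\kappa\mu}\le C_2\,3^{1-\frac\lambda2}\left(\int_t^{t+3}\io\left(\nep-\tfrac\kappa\mu\right)^2\right)^{\frac\lambda2}.
\]
(This requires $\lambda\le 2$, which is automatic since $\lambda<1$; if $p\le 2$ one could even avoid the time-Hölder step and work directly.)

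Finally I would invoke \eqref{eq:iinminusgwqleq}: since $\intninf\io(\nep-\frac\kappa\mu)^2\le C$ uniformly in $\eps$, the tail $\int_t^{t+3}\io(\nep-\frac\kappa\mu)^2$ can be made smaller than any prescribed value by choosing $t$ large, uniformly in $\eps\in(0,\eps_0)$ — more precisely, for each $\eps$ the function $t\mapsto\intninf\io(\nep-\frac\kappa\mu)^2$ is finite, so its tails vanish; combined with the uniform bound one fixes $T\ge T^\star$ so that $\int_T^{\infty}\io(\nep-\frac\kappa\mu)^2$ is small enough that $C_2\,3^{1-\frac\lambda2}(\,\cdot\,)^{\lambda/2}<\delta$. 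Then for every $t>T$ and every $\eps\in(0,\eps_0)$ the claimed inequality holds. The main obstacle is bookkeeping the order of quantifiers — one wants $T$ and (afterwards) $\eps_0$ chosen so that the smallness holds simultaneously for all $\eps\in(0,\eps_0)$ — but this is exactly what the $\eps$-uniformity in Lemma \ref{lem:nepbd} and in \eqref{eq:iinminusgwqleq} is designed to give, so no genuine difficulty arises; the interpolation and the two Hölder applications are routine.
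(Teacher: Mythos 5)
Your reduction of the problem to the smallness of $\int_t^{t+3}\io(\nep-\frac{\kappa}{\mu})^2$ — via Lemma \ref{lem:nepbd} with exponent $2p$, spatial interpolation and H\"older in time — is fine and is essentially what the paper does. The genuine gap is in the final step: you claim that the $\eps$-uniform bound $\intninf\io(\nep-\frac{\kappa}{\mu})^2\le C$ from \eqref{eq:iinminusgwqleq} lets you fix one $T$ such that $\int_T^\infty\io(\nep-\frac{\kappa}{\mu})^2$ is small \emph{for all} $\eps\in(0,\eps_0)$ simultaneously. A uniform bound on the whole-time integral does not give uniform decay of the tails: for each fixed $\eps$ the tail vanishes as $T\to\infty$, but the rate may depend on $\eps$, and the "mass'' of $\io(\nep-\frac{\kappa}{\mu})^2$ could in principle drift to later and later times as $\eps\downto 0$ (think of $g_\eps(t)=\mathbf{1}_{[1/\eps,1/\eps+1]}(t)$, which has $\int_0^\infty g_\eps=1$ uniformly but no $\eps$-independent tail smallness). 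Nothing in \eqref{eq:iinminusgwqleq} or in your argument rules this out, so the sentence "combined with the uniform bound one fixes $T\ge T^\star$ ..." is a non sequitur; this is precisely where the difficulty of the lemma sits, not a bookkeeping issue.

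The paper's proof circumvents this by passing to the limit function: from \eqref{eq:iinminusgwqleq} together with \eqref{conv:nl2} one gets $\intninf\io\left(n-\frac{\kappa}{\mu}\right)^2<\infty$ for the $\eps$-independent limit $n$, whose tails therefore are small beyond some $T$ chosen from $n$ alone; then, for each \emph{fixed} $t>T$, the convergence \eqref{conv:nl2} on the finite time interval $(t,t+3)$ yields an $\eps_t>0$ such that $\int_t^{t+3}\io(\nep-\frac{\kappa}{\mu})^2$ is comparably small for all $\eps\in(0,\eps_t)$. This is exactly why the lemma is stated with the quantifier order "for any $t>T$ there is $\eps_0>0$'': the threshold $\eps_0$ is allowed to (and does) depend on $t$. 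If you replace your uniform-tail claim by this two-step argument (tail smallness for $n$, then $L^2_{loc}$ approximation at the given $t$), the rest of your proposal goes through and coincides in substance with the paper's proof.
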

\begin{proof}
 Let $δ>0$, let $p>1$. Employing Lemma \ref{lem:nepbd} we let $T_0>0$, $ε_\star>0$ and $C>0$ be such that $\io \nep^{2p}(t)\leq C^{2p}$ for all $t>T_0$, $ε\in(0,ε_\star)$. 
 From \eqref{eq:iinminusgwqleq} and \eqref{conv:nl2} we infer that $\intninf\!\! \io \left(n-\frac{κ}{μ}\right)^2$ is finite. Hence, there is $T>T_0$ such that for all $t>T$ we have $\int_t^{t+3} \io \left(n-\frac{κ}{μ}\right)^2 \leq \frac12 δ^{2p-2} 3^{3-2p} (C+\frac{κ}{μ}|\Om|^{\frac1{2p}})^{4-2p}$. Due to \eqref{conv:nl2}, for all $t>T$ we can find $ε_t>0$ such that for all $ε∈(0,ε_t)$ we have $\int_t^{t+3}\io \left(\nep-\frac{κ}{μ}|\Om|^{\frac{1}{2p}}\right)^2<δ^{2p-2} 3^{3-2p} \kl{C+\frac{κ}{μ}|\Om|^{\frac1{2p}}}^{4-2p}$. 
 For any $t>T$, we let $ε_0:=\min\set{\eps_\star,ε_t}$. 
 By interpolation and Hölder's inequality, for any $t>T$ and any $ε∈(0,ε_0)$: 
\begin{align*}
 \int_t^{t+3}\norm[\Lom p]{\nep-\frac{κ}{μ}} &\leq \int_t^{t+3} \norm[\Lom 2]{\nep-\frac{κ}{μ}}^{\frac1{p-1}}\norm[\Lom {2p}]{\nep-\frac{κ}{μ}}^{\frac{p-2}{p-1}} \\
 &\leq  \left(\int_t^{t+3}\norm[\Lom 2]{\nep-\frac{κ}{μ}}^2\right)^{\frac1{2p-2}} \left(\int_t^{t+3}\norm[\Lom{2p}]{\nep-\frac{κ}{μ}}^{\frac{2p-4}{2p-3}}\right)^{\frac{2p-3}{2p-2}} \\
&\leq \left(δ^{2p-2}3^{3-2p} \left(C+\frac{κ}{μ}|\Om|^{\frac1{2p}}\right)^{4-2p}\right)^{\frac1{2p-2}} \left(3 \left(C+\frac{κ}{μ}|\Om|^{\frac1{2p}}\right)^{\frac {2p-4}{2p-3}}\right)^{\frac{2p-3}{2p-2}}=δ\qedhere 
\end{align*}
\end{proof}

\subsection{Convergence of $u$}

As starting point for convergence and eventual smoothness of $u$ we prove the following 

\begin{lemma}\label{lem:ul6}
 For any $q\in[1,6)$ and any $\eta>0$ there is $T>0$ such that for any $t>T$ one can find $\eps_0>0$ such that for any $\eps\in(0,\eps_0)$ 
\[
 \intttpe\norm[\Lom q]{\uep}^2<\eta.
\]
\end{lemma}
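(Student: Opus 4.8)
The plan is to test the Stokes equation \eqref{eq:uep} against $\uep$ in the usual way and exploit the fact that by now we control $\nep$ in good $L^p$-norms for large times. First I would recall from \eqref{eq:ddtueq1} that
\[
 \frac12\ddt\io|\uep|^2 + \io|\na\uep|^2 = \io\nep\na\Phi\cdot\uep + \io\uep\cdot f \onni,
\]
and, since $\uep$ is divergence-free, rewrite the buoyancy term as $\io(\nep-\frac\kappa\mu)\na\Phi\cdot\uep$. Using boundedness of $\na\Phi$, H\"older's inequality with exponents $(\frac65,6)$ and the Poincar\'e--Sobolev embedding $W^{1,2}_0(\Om)\embeddedinto L^6(\Om)$, followed by Young's inequality, I would absorb a quarter of $\io|\na\uep|^2$ to obtain
\[
 \ddt\io|\uep|^2 + \io|\na\uep|^2 \leq C\Big(\io\big(\nep-\tfrac\kappa\mu\big)^{\frac65}\Big)^{\frac53} + C\Big(\io f^{\frac65}\Big)^{\frac53} \onni
\]
for all $\eps>0$, with $C$ independent of $\eps$ (this is essentially \eqref{eq:lemddtu-new} with $\zeta=\frac\kappa\mu$).

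Next I would integrate this differential inequality. By the Poincar\'e inequality $\io|\uep|^2\leq c_p\io|\na\uep|^2$, so the left-hand side dominates $\ddt\io|\uep|^2 + \frac1{c_p}\io|\uep|^2$, and a standard ODE comparison (or simply integrating over $(t,t+1)$ after noting that $\io|\uep(t)|^2$ becomes small, which itself follows by integrating and using that the right-hand side has small tails) gives control of $\intttpe\io|\na\uep|^2$ and of $\sup_{s\in(t,t+1)}\io|\uep(s)|^2$ in terms of the tails of the right-hand side forcing. By Lemma \ref{lem:nepbd} applied with, say, $p=\frac65$ (or any $p>1$), together with \eqref{eq:iinminusgwqleq} and \eqref{conv:nl2}, the quantity $\int_s^{s+1}\big(\io(\nep-\frac\kappa\mu)^{\frac65}\big)^{\frac53}$ can be made arbitrarily small for large $s$, uniformly in small $\eps$; similarly $\int_s^{s+1}(\io f^{\frac65})^{\frac53}\to0$ because $f\in L^2((0,\infty),L^{\frac65}(\Om))\cap L^\infty$ so $(\io f^{\frac65})^{\frac53}\in L^{\frac65}((0,\infty))\subset L^1_{\mathrm{loc}}$ with vanishing tails (alternatively one uses \eqref{reg:f} directly). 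Hence $\intttpe\io|\na\uep|^2 + \sup_{(t,t+1)}\io|\uep|^2$ is small for $t$ large and $\eps$ small. The claim then follows from the Poincar\'e--Sobolev inequality: $\norm[\Lom q]{\uep(s)}\leq\norm[\Lom 6]{\uep(s)}\leq k\norm[\Lom2]{\na\uep(s)}$ for $q\in[1,6)$, so $\intttpe\norm[\Lom q]{\uep}^2\leq k^2|\Om|^{?}\intttpe\io|\na\uep|^2$ up to a harmless H\"older factor converting the exponent on $[t,t+1]$.

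The only genuine subtlety is the uniformity in $\eps$: the threshold time $T$ must not depend on $\eps$, and the smallness of $\eps_0$ is allowed to depend on $t$. This is exactly the pattern already established in Lemma \ref{lem:nepbd} and Lemma \ref{lem:intintnp}, from which the needed uniform smallness of $\int_s^{s+1}\io(\nep-\frac\kappa\mu)^{\frac65}$ is inherited; the forcing term involving $f$ is entirely $\eps$-independent. I expect the main technical point to be organizing the ODE argument so that the initial datum $\io|\uep(t_0)|^2$ (which is $\eps$-dependent and not known to be small a priori at a fixed time) is handled --- this is resolved by first deriving smallness of $\io|\uep|^2$ at \emph{some} time in a unit interval via integration of the differential inequality over a long interval where the right-hand side has small integral, and then propagating it forward, rather than trying to control it at a prescribed instant.
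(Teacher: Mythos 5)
Your argument is correct, but it takes a genuinely different route in its second half than the paper does. You both start from the same place (Lemma \ref{lem:ddtiou2} i) with $\zeta=\frac{\kappa}{\mu}$, i.e.\ testing \eqref{eq:uep} by $\uep$ and rewriting the buoyancy term using $\nabla\cdot\uep=0$). The paper then merely \emph{integrates} this inequality: by \eqref{eq:iinminusgwqleq} and \eqref{reg:f} the right-hand side is $\eps$-uniformly integrable over $(0,\infty)$, so $\int_0^\infty\io|\nabla\uep|^2\le C$ uniformly; via \eqref{conv:nau} this passes to the limit, giving $\nabla u\in L^2(\Om\times(0,\infty))$, hence $\intttpe\io|\nabla u|^2\to0$, and then $W^{1,2}_0(\Om)\embeddedinto\Lom q$ together with the strong convergence \eqref{conv:u} transfers the smallness back to $\uep$ (this last step is exactly why the lemma is stated for $q\in[1,6)$). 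You instead stay at the $\eps$-level and run a quantitative ODE/absorption argument: Poincar\'e damping, the uniform bound $\io|\uep|^2\le C$ from \eqref{eq:boundsLine1} to handle the unknown value at a prescribed time, and uniform smallness of the forcing tails (for which \eqref{eq:iinminusgwqleq} together with \eqref{conv:nl2} suffices --- Lemma \ref{lem:nepbd} is not really needed for the $L^{6/5}$-version of the buoyancy term on a bounded domain). Your plan closes correctly provided the quantifiers are ordered as you indicate (choose the waiting length $N$ from $\eta$ and the uniform bound, then the tail-smallness threshold $\delta$ from $N$, then $T$, then $\eps_0$ depending on $t$), and it buys a bit more than the paper's proof: pointwise-in-time smallness of $\norm[\Lom2]{\uep(t)}$ and even the endpoint $q=6$, at the price of noticeably more bookkeeping; the paper's detour through the limit function $u$ avoids all of this because finiteness of the space-time integral (rather than smallness of its tails at the $\eps$-level) already suffices there.
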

\begin{proof}
According to Lemma \ref{lem:ddtiou2} applied to $ζ=\frac{κ}{μ}$, there is $C>0$ such that for any $ε>0$
\[
 \ddt\io |\uep|^2 + \io |\na\uep|^2 \leq C \io \left(\nep-\frac{κ}{μ}\right)^2 + C \left(\io |f|^{\frac65}\right)^{\frac 53}\qquad \text{ on } (0,∞).
\]
Due to \eqref{reg:f} and the uniform bound for $\intninf\!\! \io \left(\nep-\frac{κ}{μ}\right)^2$ from \eqref{eq:iinminusgwqleq}, apparently there is $C>0$ such that 
\[
 \io |\uep(t)|^2-\io u_0^2+\intnt \io |\na \uep|^2 \leq C 
\]
for all \epscond\ and any $t>0$.
Accordingly, due to \eqref{conv:nau}, 
\[
 \intninf\!\! \io |\na u|^2 \leq \io u_0^2 + C \quad \mbox{ and hence } \quad \lim_{t\to∞} \int_t^{t+1} \io |\na u|^2 = 0.
\]
Because $W^{1,2}(\Om)\embeddedinto \Lom q$, for any $\eta>0$ there is $T>0$ such that for any $t>T$ we have $\intttpe\norm[\Lom q]{u}^2 <\frac{\eta}2$ and thus, by \eqref{conv:u}, for any $\eta>0$ there is $T>0$ such that for any $t>T$ there is $\eps_0>0$ such that for any $\eps\in(0,\eps_0)$, \(\intttpe\norm[\Lom q]{\uep}^2<η\).
\end{proof}

\begin{lemma}\label{lem:ueptozero}
 For any $p\in [6,\infty)$ and any $δ>0$ there is $T>0$  such that for any $t>T$ there is \(ε_0>0\) such that for any \(ε∈(0,ε_0)\) 
\[
 \norm[\Lom p]{\uep(\cdot,s)}<δ \qquad \mbox{for any } s\in [t,t+1].
\]
\end{lemma}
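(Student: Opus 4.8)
The plan is to run a variation-of-constants argument for $\uep$ with respect to the Stokes semigroup on a time window placed just to the left of $[t,t+1]$, where all inputs of the $\uep$-equation are small. First fix an exponent $q\in\big(\tfrac{6p}{p+3},6\big)$; since $p\ge 6$ this interval is nonempty and satisfies $q>3$. Given $\delta>0$, Lemma \ref{lem:ul6} (applied with this $q$) together with a mean value argument provides $T>0$ such that for $t>T$ there are $\tau_0\in[t-1,t-\tfrac12]$ and $\eps_0>0$ with $\|\uep(\cdot,\tau_0)\|_{L^q(\Om)}$ arbitrarily small for all $\eps\in(0,\eps_0)$; shrinking $\eps_0$ and enlarging $T$ if necessary, Lemma \ref{lem:intintnp} (with the exponent $p$) and \eqref{reg:f} also render $\int_{t-1}^{t+2}\|\nep(\cdot,s)-\tfrac\kappa\mu\|_{L^p(\Om)}\,ds$ and $\sup_{s\in[t-1,t+2]}\|f(\cdot,s)\|_{L^{3/2}(\Om)}$ as small as we like, uniformly in $\eps\in(0,\eps_0)$. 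Two structural observations are decisive: $\calP\big(\tfrac\kappa\mu\na\Phi\big)=0$, because $\tfrac\kappa\mu\na\Phi$ is the gradient of a $W^{1,2}(\Om)$-function, so that the buoyancy force, once projected, reduces to $\calP\big[(\nep-\tfrac\kappa\mu)\na\Phi\big]$ -- the small quantity above rather than the fixed nonzero field $\tfrac\kappa\mu\na\Phi$; and $(Y_\eps\uep\cdot\na)\uep=\na\cdot(Y_\eps\uep\otimes\uep)$, since $\na\cdot(Y_\eps\uep)=0$.

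On $[\tau_0,\tau_0+2]$ I would then use
\[
 \uep(s)=e^{-(s-\tau_0)A}\uep(\tau_0)-\int_{\tau_0}^{s}e^{-(s-s')A}\calP\na\cdot(Y_\eps\uep\otimes\uep)(s')\,ds'+\int_{\tau_0}^{s}e^{-(s-s')A}\calP\big[(\nep-\tfrac\kappa\mu)\na\Phi+f\big](s')\,ds'
\]
and estimate it in the norm $\|v\|_X:=\sup_{\tau_0\le s\le\tau_0+2}\|v(s)\|_{L^q(\Om)}+\sup_{\tau_0\le s\le\tau_0+2}(s-\tau_0)^{\frac32(\frac1q-\frac1p)}\|v(s)\|_{L^p(\Om)}$, which is finite and continuous up from $s=\tau_0$ because $\uep$ is smooth on $(0,\infty)$ (Lemma \ref{lem:locex}). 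Using the standard $L^r$--$L^\rho$ smoothing estimates for $e^{-tA}$ and for $e^{-tA}\calP\na\cdot$, the bound $\|Y_\eps\uep\otimes\uep\|_{L^{q/2}(\Om)}\le C\|\uep\|_{L^q(\Om)}^2$ (boundedness of $Y_\eps$ on $L^q_\sigma(\Om)$ uniformly in $\eps$), and splitting the forcing as above, one arrives at an inequality of the form $\|\uep\|_X\le C_0\|\uep(\tau_0)\|_{L^q(\Om)}+C_1\|\uep\|_X^2+C_2\,\mathcal E_\eps(t)$ with $\eps$-independent constants $C_0,C_1,C_2$ and $\mathcal E_\eps(t)\to0$ uniformly in small $\eps$ (collecting the contributions of $\nep-\tfrac\kappa\mu$ and $f$). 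Here $q>3$ makes the time singularities $(s-s')^{-(\frac12+\frac3{2q})}$ and $(s-s')^{-(\frac12+\frac32(\frac2q-\frac1p))}$ integrable -- the latter requiring exactly $q>\tfrac{6p}{p+3}$ -- while $q<6$ is what permits invoking Lemma \ref{lem:ul6}; the forcing terms produce only singularities $(s-s')^{-(1-\frac3{2q})}$, $(s-s')^{-(1-\frac3{2p})}$ or none, all integrable. By the usual continuity-in-$s$ absorption of the quadratic term, once $\|\uep(\tau_0)\|_{L^q}$ and $\mathcal E_\eps(t)$ are small one gets $\|\uep\|_X<\delta'$ for a preassigned $\delta'$; since $[t,t+1]\subset[\tau_0+\tfrac12,\tau_0+2]$ and hence $s-\tau_0\ge\tfrac12$ on $[t,t+1]$, this yields $\|\uep(\cdot,s)\|_{L^p(\Om)}\le 2^{\frac32(\frac1q-\frac1p)}\delta'<\delta$ for all $s\in[t,t+1]$ and all $\eps\in(0,\eps_0)$.

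The main obstacle is the nonlinear term $\int_{\tau_0}^{s}e^{-(s-s')A}\calP\na\cdot(Y_\eps\uep\otimes\uep)\,ds'$: the factor $(s-s')^{-1/2}$ generated by the divergence means that the mere $L^2$-in-time smallness of $\|\uep\|_{L^q}$ furnished by Lemma \ref{lem:ul6} cannot be passed through a H\"older inequality in time, so it is essential to work in the scaling-adapted space $X$ and absorb $\|\uep\|_X^2$ instead of integrating a fixed function; balancing the resulting exponents against the $L^6$-ceiling of Lemma \ref{lem:ul6} is precisely what forces the choice $q\in\big(\tfrac{6p}{p+3},6\big)\subset(3,6)$ and, for $p=\infty$, would break down -- consistent with the lemma being stated only for finite $p$. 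A secondary point requiring care is $\eps$-uniformity: $T$, $\tau_0$, $\eps_0$ and all smallness thresholds must depend on $t$ alone, which is legitimate because the decay statements of Lemma \ref{lem:ul6}, Lemma \ref{lem:intintnp} and \eqref{reg:f} are themselves uniform in small $\eps$ and the remaining constants (including the semigroup constants and the norm of $Y_\eps$ on $L^q_\sigma(\Om)$) are $\eps$-independent.
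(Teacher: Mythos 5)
Your proposal is correct and follows essentially the same route as the paper: a Duhamel/Stokes-semigroup argument on a window shifted to the left of $[t,t+1]$, with a small $L^q$-datum ($q\in(3,6)$ subject to exactly the arithmetic condition \eqref{eq:chooseq}) furnished by Lemma \ref{lem:ul6}, smallness of $\calP\big[(\nep-\frac{\kappa}{\mu})\na\Phi\big]$ and of $f$ from Lemma \ref{lem:intintnp} and \eqref{reg:f}, and the quadratic term absorbed in the time-weighted norm with weight $(s-\tau_0)^{\frac32(\frac1q-\frac1p)}$. The only cosmetic deviation is that you close the estimate by a continuity/absorption argument applied directly to $\uep$ (justified by its classical regularity), whereas the paper runs a Banach fixed-point argument in the weighted space and identifies the fixed point with $\uep$ via uniqueness of weak solutions.
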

\begin{proof}
 We let $p\ge 6$ and choose $q\in(3,6)$ such that 
 \begin{equation}\label{eq:chooseq}
  \frac12-\frac{3}{2p}-3\kl{\frac1q-\frac1p} = \frac12+\frac{3}{2p} -\frac3q\geq 0
 \end{equation}
 and define $γ:=\frac32(\frac1q-\frac1p)$. We use $L^p$-$L^q$-estimates for the Stokes semigroup (see e.g. \cite[Lemma 2.3]{XinruJoh}) to choose constants $k_1$, $k_2$, $k_3$, $k_4$, $k_5$ such that  
 \begin{align}\label{eq:chooseCStokesdecay}
  \norm[\Lom p]{e^{-tA}\calP ϕ} \leq& k_1 t^{-γ} \norm[\Lom  q]{ϕ}\qquad &&\text{for all }\phi\in \Lom q \mbox{  and all } t>0 \nn\\
  \norm[\Lom p]{e^{-tA}\calP\na\cdot ϕ} \leq& k_2 t^{-\frac12-\frac3{2p}}\norm[\Lom {\frac p2}]{ϕ}\qquad &&\text{for all }\phi\in \Lom{\frac p2}\mbox{  and all } t>0 \nn\\
  \norm[\Lom p]{e^{-tA}\calP ϕ} \leq& k_3 \norm[\Lom p]{ϕ}\qquad &&\text{for all }\phi\in \Lom p\mbox{  and all } t>0 \\
  \int_0^3 \norm[\Lom p]{e^{-(3-s)A} \calP ϕ} \leq& \rlap{$\displaystyle k_4 \int_0^3 (3-s)^{-\frac32(\frac23-\frac1p)} \norm[\Lom{\frac32}]{\calP ϕ(s)} \leq k_5 \sup_{s\in(0,3)} \norm[\Lom {\frac32}]{ϕ(s)}$}\nn \\
& &&\text{ for all } ϕ\in L^∞((0,3),L^{\frac32}(\Om)) \nn
 \end{align}
 and pick $δ_0∈(0,δ)$ such that 
\begin{equation}\label{eq:choosedelta}
 δ_0<\left(2k_2 \int_0^3 s^{-\frac12-\frac3{2p}} s^{-2γ}ds\right)^{-1}\quad \mbox{ and } \quad \delta_0\leq\kl{2k_2\int_0^3 s^{-\frac12-\frac3{2p}}ds}^{-1}
\end{equation}

 We then pick $t_0$ such that for every $t>t_0$ we can find $ε_t>0$ such that for any \(ε∈(0,ε_t)\)
\begin{align*}
 \intttpe \norm[\Lom q]{\uep} <& \frac{δ_0}{4k_1}, & 
 \int_t^{t+3}\norm[\Lom p]{\nep-\frac{κ}{μ}}<& \frac{δ_0}{3^\gamma 4 k_3\norm[\Lom{∞}]{\na Φ}}, 
 \sup_{s\in(t,t+3)}\norm[\Lom {\frac32}]{f(s)}<&\frac{δ_0}{3^{γ}4k_5},
\end{align*}
which is possible due to Lemma \ref{lem:ul6} (applied to $η=\kl{\frac{\delta_0}{4k_1}}^2$ and combined with Hölder's inequality), Lemma \ref{lem:intintnp} and \eqref{reg:f}. 
We let $t_1>t_0$ and $ε∈(0,ε_{t_1})$ and find $t_\star∈(t_1,t_1+1)$ such that $\norm[\Lom q]{\uep(t_\star)}<\frac{δ_0}{4k_1}$. 
We define 
\[
 T=t_0+2.
\]

In $X=\set{ v\colon \Om\times(t_\star,t_\star+3) \to ℝ;\; \sup_{s\in(0,3)} s^\gamma \norm[\Lom p]{v(t_\star+s)}\le δ_0}$ we now consider the mapping $Ψ\colon X\to X$ given by  
\[
 Ψ(v) =e^{-tA}u(t_\star) + \int_{t_\star}^{t} e^{(t-s)A} \calP\left[-\na\cdot (\Yep v\otimes v)(s) + \nep(s)\naΦ + f(s)\right] ds.
\]
First, we verify that actually $Ψ(v)\in X$ for all $v\in X$. 
Taking into account \eqref{eq:chooseCStokesdecay}, for any such $v$ we may estimate 
\begin{align*}
 \norm[\Lom p]{Ψv(t)} \leq& k_1(t-t_\star)^{-γ} \norm[\Lom q]{\uep(t_\star)} + k_2\int_{t_\star}^{t}(t-s)^{-\frac12-\frac32(\frac2p-\frac1p)} \norm[\Lom{\frac p2}]{v\otimes v}\\
& + k_3\norm[\Liom]{\naΦ} \int_{t_\star}^{t_\star+3} \norm[\Lom p]{\nep-\frac{κ}{μ}} + k_5 \sup_{s\in(t_\star,t_\star+3)} \norm[\Lom {\frac32}]{f}
\end{align*}
for all $t\in(t_\star,t_\star+3)$. 
Thus, if we use the choice of $t_\star$ and $ε$, that $\Yep$ is contracting and that $\norm[\Lom{\frac p2}]{v\otimes v}\leq\norm[\Lom p]{v}^2$ by Hölder's inequality, we see that for every $t\in(t_{\star},t_{\star}+3)$ and every $v\in X$
\begin{align}\label{eq:ueplpsmall}
  (t-t_\star)^{γ}\norm[\Lom p]{Ψ(v)(t)} &\leq \frac{δ_0}4 + δ_0\left(δ_03^{γ}k_2 \int_{t_\star}^t (t-s)^{-\frac12-\frac3{2p}} (s-t_\star)^{-2γ} ds \right) +\frac{δ_0}4+\frac{δ_0}4 \leq δ_0,
\end{align} 
where for the estimate $\int_{t_\star}^t (t-s)^{-\frac12-\frac3{2p}} (s-t_\star)^{-2γ} ds\leq \int_0^3 (t-s)^{-\frac12-\frac3{2p}} s^{-2γ}ds $ we rely on \eqref{eq:chooseq} and where we take into account \eqref{eq:choosedelta}. 
Moreover, for any $v,w\in X$, 
\begin{align*}
 \norm[\Lom{\frac p2}]{v\otimes v - w\otimes w}&=\norm[\Lom {\frac p2}]{v\otimes(v-w)+(v-w)\otimes w}\\
  &\leq (\norm[\Lom p]{v}+\norm[\Lom p]{w})\norm[\Lom p]{v-w}\leq 2δ_0\norm[\Lom p]{v-w}
\end{align*}
and hence 
\begin{align*}
 \norm[\Lom p]{Ψ(v)(t)-Ψ(w)(t)}&\leq k_2\int_{t_\star}^{t} (t-s)^{-\frac12-\frac3{2p}} \norm[\Lom {\frac p2}]{v\otimes v - w\otimes w} ds \\
 &\leq 2k_2δ_0\int_0^3 s^{-\frac12-\frac3{2p}} ds \norm[\LT\infty{\Lom p}]{v-w},
\end{align*}
so that $Ψ$ apparently is a contraction on $X$. Therefore, there is a unique fixed-point of $Ψ$ on $X$, which, due to the definition of $Ψ$, must coincide with the unique weak solution $\uep$ of \eqref{eq:uep} on $(t_\star,t_\star+3)$ (cf. \cite[Thm. V.2.5.1]{sohr_book}). 
From \eqref{eq:ueplpsmall} we may conclude that 
\[
 \norm[\Lom p]{\uep(\cdot,t)} \leq δ
\]
for all $t\in(t_\star+1,t_\star+3)\supset(t_1+2,t_1+3)$.
\end{proof}

In the following lemmata, we will attempt to prove Hölder regularity of the components of a solution on intervals of the form $(t_0,t_0+1)$ for $t_0>0$ by using that they satisfy certain PDEs. The estimates used for this purpose take into account initial data, that is, e.g., $u(t_0)$, about which we do not know much. Therefore, we introduce the following cut-off functions:

\begin{definition}\label{def:xi} 
Let $ξ_0\colonℝ\to[0,1]$ be a smooth, monotone function, satisfying $ξ_0\equiv 0$ on $(-∞,0]$ and $ξ_0\equiv 1$ on $(1,∞)$ and for any $t_0∈ℝ$ we let 
$ξ_{t_0}:=ξ_0(\cdot-t_0)$. 
\end{definition}

We will employ this function in the proof of the following lemma on regularity of $\uep$.

\begin{lemma}\label{lem:uepc1alpha}
There are $α∈(0,1)$, $T>0$ and $C>0$ such that for any $t>T$ one can find $ε_0>0$ such that for any $ε\in(0,ε_0)$ the estimate 
 \begin{equation}\label{eq:lemholderestuep}
 \nnorm{C^{1+\alpha,\frac{α}2}(\Ombar\times[t,t+1])}{\uep}\le C
\end{equation}
 holds true.
\end{lemma}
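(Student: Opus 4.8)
The plan is to obtain eventual Hölder regularity for $\uep$ from parabolic regularity theory for the (Stokes) equation \eqref{eq:uep}, exploiting the smallness of $\uep$ and of $\nep-\frac\kappa\mu$ on large time intervals that was established in the preceding lemmata. First I would fix $T$ according to Lemma \ref{lem:ueptozero} (applied with some $p>3$) and Lemma \ref{lem:nepbd} (with the same or a larger $p$), so that for $t>T$ and sufficiently small $\eps$ we have $\norm[\Lom p]{\uep(\cdot,s)}\le 1$ on $[t-1,t+1]$ and $\norm[\Lom p]{\nep(\cdot,s)}\le C$ there. Then I would pick a test-interval $(t_0,t_0+1)$ with $t_0=t-1$ and multiply \eqref{eq:uep} through by the cut-off $\xi_{t_0}$ from Definition \ref{def:xi}, so that $v:=\xi_{t_0}\uep$ satisfies $v_t - \Delta v = \xi_{t_0}' \uep - \xi_{t_0}\na\cdot(\Yep\uep\otimes\uep) + \xi_{t_0}(\nep\na\Phi + f) + \na(\xi_{t_0}P_\eps)$ with zero initial data at $t_0$ and homogeneous Dirichlet boundary data; the point of the cut-off is precisely that the unknown initial trace $\uep(t_0)$ no longer enters.

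Next I would apply the Helmholtz projection and work with the Stokes semigroup representation
\[
 v(t) = -\int_{t_0}^t e^{-(t-s)A}\calP\big[\xi_{t_0}\na\cdot(\Yep\uep\otimes\uep)\big](s)\,ds + \int_{t_0}^t e^{-(t-s)A}\calP\big[\xi_{t_0}'\uep + \xi_{t_0}\nep\na\Phi + \xi_{t_0}f\big](s)\,ds,
\]
and use the smoothing estimates $\norm[\Lom p]{A^\beta e^{-\tau A}\calP\phi}\le k\tau^{-\beta-\frac3{2}(\frac1q-\frac1p)}\norm[\Lom q]{\phi}$ together with $\norm[\Lom p]{A^\beta e^{-\tau A}\calP\na\cdot\phi}\le k\tau^{-\beta-\frac12-\frac3{2}(\frac2p-\frac1p)}\norm[\Lom{p/2}]{\phi}$ (cf. \cite[Lemma 2.3]{XinruJoh}, as already used in the proof of Lemma \ref{lem:ueptozero}). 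Choosing $\beta\in(\frac12,1)$ with $2\beta+\frac32\cdot\frac1p<1$ and $p$ large enough, the time-singularities remain integrable, and the bounds on $\norm[\Lom p]{\uep}$ and $\norm[\Lom p]{\nep}$ on $[t_0,t_0+1]$ (plus \eqref{reg:f}) yield a bound on $\sup_{s\in[t_0,t_0+1]}\norm[\Lom 2]{A^\beta\uep(s)}$ on $[t_0+1,t_0+1]$... more precisely on $[t_0+\tfrac12,t_0+1]\ni t$, independent of $\eps$. Since $D(A^\beta)\embeddedinto C^{\bar\alpha}(\Ombar)$ for $\bar\alpha=2\beta-\frac32$ when $\beta>\frac34$, one already gets uniform spatial Hölder bounds; to upgrade this to the full $C^{1+\alpha,\alpha/2}$ bound \eqref{eq:lemholderestuep} I would then invoke Hölder regularity theory for the inhomogeneous Stokes system (e.g. Solonnikov-type estimates, or the Schauder theory referenced later for Lemma \ref{lem:c2alpha}): with the right-hand side $\xi_{t_0}(\nep\na\Phi+f) - \xi_{t_0}\na\cdot(\Yep\uep\otimes\uep)$ now controlled in a suitable Hölder (or at least $L^\infty_t L^\infty_x$, using the bootstrapped bounds and $\beta$ large) norm on $(t_0,t_0+1)$, and zero initial data, one obtains $\uep\in C^{1+\alpha,\frac\alpha2}(\Ombar\times[t_0+1,t_0+1+1])$, i.e. on $[t,t+1]$, with a constant independent of $\eps$ and of $t>T$.

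The main obstacle I anticipate is the nonlinear convective term $(\Yep\uep\cdot\na)\uep = \na\cdot(\Yep\uep\otimes\uep)$: to feed it into Schauder theory one needs more than the energy-type bound, so I would first run the $D(A^\beta)$-bootstrap above to get $\uep$ bounded in, say, $L^\infty((t_0,t_0+1),W^{1,p}(\Om))$ or $C^{\bar\alpha}$, which makes $\Yep\uep\otimes\uep$ (using nonexpansiveness of $\Yep$ on $L^p_\sigma$, cf. \cite[II.(3.4.6)]{sohr_book}) bounded in a space good enough for the final Hölder step; care is needed that none of these intermediate bounds depend on the initial data $\uep(t_0)$, which is exactly what the cut-off $\xi_{t_0}$ ensures, and that all constants are uniform in $\eps\in(0,\eps_0)$, which follows since every ingredient — the smallness from Lemma \ref{lem:ueptozero}, the $L^p$-bound from Lemma \ref{lem:nepbd}, and the semigroup constants — is $\eps$-independent with a threshold $\eps_0=\eps_0(t)$.
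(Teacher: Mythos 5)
Your overall strategy (cutting off with $ξ_{t_0}$ to eliminate the unknown initial trace, feeding the smallness/boundedness from Lemma \ref{lem:ueptozero} and Lemma \ref{lem:nepbd} plus \eqref{reg:f} into Stokes regularity theory, and keeping all constants $\eps$-independent) is the same in spirit as the paper's, but two of your concrete steps do not work as stated. First, the parameter choice in the semigroup bootstrap is impossible: for the divergence-form nonlinearity you invoke the estimate $\norm[\Lom p]{A^{\beta}e^{-\tau A}\calP\na\cdot\phi}\le k\,\tau^{-\beta-\frac12-\frac3{2p}}\norm[\Lom{p/2}]{\phi}$, whose time singularity is integrable only if $\beta<\frac12-\frac3{2p}$; this contradicts both your requirement $\beta\in(\frac12,1)$ (note $2\beta+\frac3{2p}<1$ already forces $\beta<\frac12$) and the $\beta>\frac34$ needed for the embedding $D(A^\beta)\embeddedinto C^{\bar\alpha}(\Ombar)$ in the $L^2$ scale you then switch to. A single smoothing step therefore cannot reach the regularity you claim; one would have to iterate (first gain $W^{1,p}$-type control with $\beta<\frac12$, then rewrite the convection term in non-divergence form, etc.), and also keep the integrability exponent consistent ($L^p$ throughout rather than jumping to $\norm[\Lom2]{A^\beta\uep}$). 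Second, the final upgrade to \eqref{eq:lemholderestuep} via ``Schauder/Solonnikov'' is not justified at that stage: Schauder estimates require a right-hand side that is Hölder continuous in space-time, whereas after your bootstrap you only have $L^\infty$-type bounds; merely bounded forcing does not feed into \cite{Solonnikov}. (In the paper, Schauder theory is used only later, in Lemma \ref{lem:c2alpha}, precisely after the $C^{1+\gamma,\frac\gamma2}$ bounds are already available.)

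For comparison, the paper avoids both issues by a different machinery: it applies maximal Sobolev ($L^s$-$L^s$) regularity for the Stokes evolution equation (\cite{giga_sohr}) to $ξ_{t_0}\uep$, estimates the projected convection term by H\"older's inequality and the Gagliardo--Nirenberg interpolation $\norm[\Lom{s_1}]{\na(ξ\uep)}\le k\norm[\Lom s]{D^2(ξ\uep)}^{a}\norm[\Lom r]{ξ\uep}^{1-a}$ with $a\in(\frac12,1)$, absorbs it by Young's inequality into the left-hand side, and then obtains \eqref{eq:lemholderestuep} from the embedding of $\set{v:\ v_t\in L^s((t,t+1),\Lom s),\ v\in L^s((t,t+1),W^{2,s}(\Om))}$ into $C^{1+\alpha,\frac\alpha2}(\Ombar\times[t,t+1])$ for large $s$ (\cite[Thm. 1.1]{amann_cptembeddings}). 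If you want to salvage your semigroup route, you should either iterate the smoothing argument with admissible exponents and finish with exactly this kind of maximal-regularity-plus-embedding step, or drop the semigroup bootstrap altogether and argue as the paper does; as currently written, the proof has a genuine gap at the two points named above.
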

\begin{proof}
Let $s>3$ and $s_1>2s$. Let $r>1$ and let $s_1'$ be  such that $\frac1{s_1}+\frac1{s_1'}=1$. 
According to Lemma \ref{lem:nepbd} there are $C>0$, $T_1>0\) satisfying that for any $t>T_1$ there is $ε_1>0$ such that 
\begin{equation}\label{eq:uepcalpha-nepls}
 \intttpe \norm[\Lom s]{\nep}<C \qquad \mbox{for all }  ε\in(0,ε_1)
\end{equation}
and Lemma \ref{lem:ueptozero} makes it possible to find $T\ge T_1$, such that for all $t>T$ there is $ε_t∈(0,ε_1)$ such that 
\begin{equation}\label{eq:uepcalpha-ubds}
 \norm[L^\infty((t,t+2),L^r(\Om))]{\uep}<C, \qquad \norm[L^\infty((t,t+2),L^s(\Om))]{\uep}<C, \qquad \norm[L^\infty((t,t+2),L^{s_1'}(\Om))]{\uep}<C
\end{equation}
for every $\eps∈(0,ε_t)$.
Moreover, given any $t_0>T$ we let $ξ:=ξ_{t_0}$ as in Definition \ref{def:xi} and note that due to \eqref{eq:uepcalpha-nepls}, \eqref{eq:uepcalpha-ubds} and \eqref{reg:f} there is $k_0>0$ such that 
\begin{equation}\label{eq:intest949}
 \intttz \norm[\Lom s]{\calPξ\left(n-\frac{κ}{μ}\right)\naΦ}^s+\intttz \norm[\Lom s]{\calP ξ'\uep}^s + \intttz \norm[\Lom s]{\calP ξf}^s\leq k_0
\end{equation}
for any $t_0>T$ and $ε∈(0,ε_{t_0})$. 
We then for any $t_0>0$ let $ξ:=ξ_{t_0}$ and observe that the function $ξ\uep$ solves 
\begin{align*}
 (ξ\uep)_t =& \Delta (ξ\uep) - (\Yep\uep•\na)ξ\uep + \na (ξP_{\eps}) + ξ\nep\naΦ +ξf+ξ'\uep \qquad \mbox{in } \Om\times(t_0,∞)\\
 \na\cdot(ξ\uep)=&0 \qquad \mbox{in } \Om\times(t_0,∞)\\
 (ξ\uep)(\cdot,t_0)=&0 \quad \mbox{ in } \Om,\qquad (ξ\uep)=0 \quad \mbox{ on } \dOm\times(t_0,∞)
\end{align*}
and hence the known maximal Sobolev regularity estimate for the Stokes semigroup (\cite{giga_sohr}) yields a constant $k_1>0$ such that 
\begin{align}\label{eq:sgestu}
 \intttz& \norm[\Lom s]{(ξ\uep)_t}^s+\intttz \norm[\Lom s]{D^2(ξ\uep)}^s\\\ 
&\leq k_1\bigg[0+ \intttz \norm[\Lom s]{\calP((ξ\Yep\uep•\na)\uep)}^s +\intttz \norm[\Lom s]{\calPξ\left(n-\frac{κ}{μ}\right)\naΦ}^s\nn
\\&+\qquad\quad\intttz \norm[\Lom s]{\calP ξ'\uep}^s + \intttz \norm[\Lom s]{\calP ξf}^s\bigg]\nn
\end{align}

From the boundedness of the Helmholtz projection in $L^r$-spaces and Hölder's inequality we obtain $k_2>0$ such that for any $t_0>T$
\begin{align*}
 \norm[\Lom s]{\calP (\Yep\uep•\na)ξ\uep(t)}^s &\leq k_2\norm[\Lom {s_1'}]{\Yep\uep(t)}^s\norm[\Lom{s_1}]{\na(ξ\uep(t))}^s \\
  & \leq k_2\norm[\Lom{s_1'}]{\uep(t)}^s\norm[\Lom {s_1}]{\na(ξ\uep)(t)}^s \leq k_2 C^s\norm[\Lom {s_1}]{\na(ξ\uep)(t)}^s
\end{align*}
for all $t\in(t_0,t_0+2)$ and any $ε\in(0,ε_{t_0})$.
We let $a=\frac{\frac13-\frac1{s_1}+\frac1r}{\frac23-\frac1s+\frac1r}$ and observe that $a\in(\frac12,1)$. 
Hence the \GNI\ provides us with $k_3>0$ such that for all $t_0>T$
\begin{align*}
 \norm[\Lom {s_1}]{\na(ξ\uep(t))}^s&\leq  k_3\norm[\Lom s]{D^2(ξ\uep)(t)}^{as}\norm[\Lom r]{ξ\uep(t)}^{(1-a)s}\\
  &\leq k_3C^{(1-a)s} \norm[\Lom s]{D^2(ξ\uep)(t)}^{as}\qquad \mbox{for all } t∈(t_0,t_0+2), ε∈(0,ε_{t_0}).
\end{align*}
Therefore, employing \YI, we find $k_4>0$ such that for all $t_0>T$ 
\begin{align*}
 \int_{t_0}^{t_0+2} \norm[\Lom s]{\calP(\Yep\uep\naξ\uep)}^s\leq k_1k_2k_3\int_{t_0}^{t_0+2} \norm[\Lom s]{D^2(ξu)}^{as} 
 \leq \frac12\int_{t_0}^{t_0+2}\norm[\Lom s]{D^2(ξu)}^s + 2k_4 
\end{align*}
for all $ε∈(0,ε_{t_0})$. 
Combining this with \eqref{eq:intest949}, we thus can find $k_5>0$ such that \eqref{eq:sgestu} turns into 
\[
 \int_{t_0}^{t_0+2}\norm[\Lom s]{(ξ_{t_0}\uep)_t}^s+\frac12\int_{t_0}^{t_0+2} \norm[\Lom s]{D^2(ξ_{t_0}\uep)} \leq k_5 
\]
for any $t_0>T$ and any $ε∈(0,ε_0)$.
Accordingly, for any $s>1$ there are $C>0$, $T>0$ such that for any $t>T$ there is $ε_0>0$ satisfying that for any $ε\in(0,ε_0)$ 
\[
 \norm[L^s((t,t+1),L^s(\Om))]{\uept}+\norm[L^s((t,t+1),W^{2,s}(\Om))]{\uep}\leq C.
\]
Finally, by \cite[Thm. 1.1]{amann_cptembeddings}, this implies \eqref{eq:lemholderestuep}.
\end{proof}

\subsection{Eventual smoothness of $c$}
Applying a similar reasoning, concerning $c$ we obtain bounds of the same kind. 

\begin{lemma}\label{lem:cc1pa}
 For every $p∈(1,∞)$ there are $C>0$, $T>0$, $α∈(0,1)$ such that for any $t>T$ there is  $ε_0>0$ such that
\begin{equation}\label{eq:csobest}
 \intttpe \norm[\Lom p]{\cept} +\intttpe \norm[W^{2,p}(\Om)]{\cep} \leq C
\end{equation}
 for all $ε∈(0,ε_0)$ and moreover there are $C>0$, $T>0$, $α∈(0,1)$ such that for any $t>T$ there is  $ε_0>0$ such that
\[
 \nnorm{C^{1+γ,\frac{γ}2}(\Ombar\times[t,t+1])}{\cep} \leq C
\]
 for any $ε∈(0,ε_0)$.
\end{lemma}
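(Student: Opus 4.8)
The plan is to follow the template of Lemma \ref{lem:uepc1alpha}: cut off in time so as to dispose of the unknown initial datum, apply maximal parabolic Sobolev regularity to the resulting equation with homogeneous Neumann boundary data, and conclude with the embedding theorem of \cite{amann_cptembeddings}. The only genuinely new ingredient is a short bootstrap placing $\na\cep$ into high $L^q$-spaces.

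First I would assemble the $ε$-uniform information available for large times. By Lemma \ref{lem:cbd}, $\cep$ is bounded in $L^\infty(\Om\times(0,\infty))$; by Lemma \ref{lem:uepc1alpha}, $\uep$ is bounded in $L^\infty(\Om\times[t,t+2])$ for large $t$ and small $ε$; by Lemma \ref{lem:nepbd}, for every finite $q$ the norm $\norm[\Lom q]{\nep(\cdot,t)}$ is bounded for large $t$ and small $ε$; the elementary estimate $\frac1ε\ln(1+ε\nep)\le\nep$ supplies an $ε$-free majorant for the reaction term; and by \eqref{eq:boundsLine4}, $\na\cep$ is bounded in $L^4((t,t+1),L^4(\Om))$ for every $t$, uniformly in $ε$. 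For $t_0$ large, with $ξ:=ξ_{t_0}$ as in Definition \ref{def:xi}, the function $ξ\cep$ satisfies
\[
 (ξ\cep)_t = Δ(ξ\cep) - ξ\uep•\na\cep - ξ\cep\frac1ε\ln(1+ε\nep) + ξ'\cep \quad \mbox{in } \Om\times(t_0,\infty),
\]
together with $\delny(ξ\cep)=0$ on $\dOm\times(t_0,\infty)$ and $(ξ\cep)(\cdot,t_0)=0$.

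The bootstrap would then run as follows. Since $\uep\in L^\infty$ and $\na\cep\in L^4((t_0,t_0+2),L^4(\Om))$, the forcing term $g:=-ξ\uep•\na\cep-ξ\cep\frac1ε\ln(1+ε\nep)+ξ'\cep$ lies in $L^4((t_0,t_0+2),L^4(\Om))$ with a bound independent of $ε$ and of $t_0>T$; maximal Sobolev regularity for the Neumann heat semigroup then yields $ξ\cep\in L^4((t_0,t_0+2),W^{2,4}(\Om))$ with $(ξ\cep)_t\in L^4((t_0,t_0+2),L^4(\Om))$, and the standard parabolic embedding into $C^0([t_0,t_0+2],W^{\frac32,4}(\Om))$ combined with $W^{\frac32,4}(\Om)\embeddedinto W^{1,12}(\Om)$ upgrades $\na\cep$ to a bound in $L^\infty((t,t+1),L^{12}(\Om))$ valid for all large $t$ and small $ε$, still uniformly (the passage from the subinterval $[t_0+1,t_0+2]$, on which $ξ\equiv1$, to an arbitrary unit window being routine). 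Repeating this step once more, $\na\cep$ becomes bounded in $L^\infty((t,t+1),L^\infty(\Om))$ for large $t$ and small $ε$. Feeding this in, $g$ is now bounded in $L^p((t_0,t_0+2),L^p(\Om))$ for the prescribed $p$ — here Lemma \ref{lem:nepbd} is used once more for the reaction term — so a final application of maximal regularity, restricted to the subinterval on which $ξ\equiv1$ and after relabelling, produces \eqref{eq:csobest}. Taking $p$ large and invoking \cite[Thm.~1.1]{amann_cptembeddings} for $W^{1,p}((t,t+1),L^p(\Om))\cap L^p((t,t+1),W^{2,p}(\Om))$ then gives the claimed $C^{1+γ,\frac γ2}$-bound; the order of quantifiers (with $ε_0$ depending on $t$) is inherited from Lemmata \ref{lem:nepbd} and \ref{lem:uepc1alpha}.

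The main difficulty is to keep every constant simultaneously independent of $ε$ and of the (large) base time $t_0$. Independence of $ε$ is secured because the only $ε$-dependent nonlinearity, $\frac1ε\ln(1+ε\nep)$, is controlled by $\nep$, for which Lemmata \ref{lem:nleq} and \ref{lem:nepbd} provide $ε$-uniform estimates; independence of $t_0$ is precisely what the cut-off $ξ_{t_0}$ achieves, since it replaces the uncontrolled datum $\cep(\cdot,t_0)$ by $0$ while every term of $g$ is estimated by quantities already bounded uniformly in time on $(t_0,t_0+2)$. A minor technical point is that the bound on $\na\cep$ from Lemma \ref{lem:allthebounds} which seeds the bootstrap is only the time-integrated $L^4$-estimate of \eqref{eq:boundsLine4}, qualitatively modest, but it holds on every window $(t_0,t_0+2)$, which is all the argument needs.
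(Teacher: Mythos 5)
Your proposal is correct in substance, but it handles the only delicate term, $\uep\cdot\na\cep$, by a genuinely different mechanism than the paper. The paper performs a single application of maximal Sobolev regularity to $ξ\cep$ and absorbs the resulting term $\norm[\Lom p]{\na(ξ\cep)}^p$ directly into $\norm[\Lom p]{\Delta(ξ\cep)}^p$ via the \GNI\ (with $\norm{D^2\cdot}$ replaced by $\norm{\Delta\cdot}$) and Young's inequality, using only the uniform $L^\infty$-bound on $\cep$ from Lemma \ref{lem:cbd} as the low-order anchor; no a priori integrability of $\na\cep$ beyond that is needed. You instead seed a bootstrap with the time-integrated $L^4$-bound \eqref{eq:boundsLine4}, iterate maximal regularity together with the trace embedding of the maximal-regularity class into $C^0([t_0,t_0+2],B^{2-\frac2p}_{p,p}(\Om))$ to push $\na\cep$ up to $L^\infty$, and only then apply maximal regularity at the prescribed exponent. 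This works and even delivers the extra intermediate information $\na\cep\in L^\infty(\Om\times(t,t+1))$ (which the paper only obtains afterwards through the Hölder bound), but it is longer and uses more machinery; the paper's one-shot absorption is shorter and avoids trace-space embeddings altogether. One small caveat in your chain: the embedding $W^{\frac32,4}(\Om)\embeddedinto W^{1,12}(\Om)$ is exactly critical ($\frac32-\frac34=1-\frac3{12}$) and in the Besov/Sobolev borderline need not hold in the stated form; but you do not need the endpoint -- any target $L^r$ with $r\in(5,12)$ for $\na\cep$ suffices to make the second maximal-regularity step produce $\na\cep\in L^\infty$, so replacing $12$ by, say, $10$ repairs this without affecting anything else. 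The quantifier structure (T depending on $p$, $\eps_0$ depending on $t$) is inherited correctly in both arguments.
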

\begin{proof}
 Let $p∈(1,\infty)$ and choose $q∈(1,p)$. 
 We use Lemma \ref{lem:uepc1alpha} and Lemma \ref{lem:nepbd} to choose $T_\star>0$ such that there are $C_u>0$, $C_n>0$, for any $t>T_\star$ allowing us to find $ε_t>0$ such that 
\[
 \norm[L^\infty(\Om\times(t,t+2))]{\uep}\leq C_u \quad \mbox{and} \quad \norm[L^\infty((T,\infty),L^p(\Om))]{\nep}\leq C_n
\]
 for all $ε∈(0,ε_t)$. We then employ maximal Sobolev estimates for the Neumann heat semigroup (\cite{giga_sohr}), which yield $k_1>0$ such that 
\begin{align*}
 \int_{t_0}^{t_0+2} &\norm[\Lom p]{(ξ\cep)_t}^p+\int_{t_0}^{t_0+2}\norm[\Lom p]{\Delta (ξ\cep)}^p \\& \leq k_1\left( 0+ \int_{t_0}^{t_0+2} \norm[\Lom p]{ξ\uep•\na\cep}^p+\int_{t_0}^{t_0+2}\norm[\Lom p]{ξ\cep\ln(1+ε\nep)}^p+\int_{t_0}^{t_0+2}\norm[\Lom p]{ξ'\cep}^p\right)\\
 &\leq k_1C_u^p\int_{t_0}^{t_0+2} \norm[\Lom p]{\na(ξ\cep)}^p+ 2k_1\norm[\Liom]{c_0}^p(C_n^p+\norm[L^\infty(ℝ)]{ξ'})
\end{align*}
for any $t_0>T$ and any $ε∈(0,ε_0)$.
With $k_2>0$ being the constant featured by the \GNI\ (aided by e.g. \cite[Thm. 3.4]{Simader_weakDirichletandNeumann} for replacing $D^2$ by $\Delta$), for any $t_0>T$ we moreover have 
\begin{align*}
 \norm[\Lom p]{\na(ξ\cep)(t)}^p \leq& k_2 \norm[\Lom p]{\Delta (ξ\cep)(t)}^{ap}\norm[\Lom q]{ξ\cep(t)}^{(1-a)p} + k_2\norm[\Liom]{ξ\cep(t)}^p\\
 \leq& \frac12 \norm[\Lom p]{\Delta (ξ\cep)(t)}^p + k_3 \norm[\Lom q]{ξ\cep(t)}^p + k_2\norm[\Liom]{c_0}^p \  \mbox{for any }ε∈(0,ε_{t_0}), t∈(t_0,t_0+2),
\end{align*}
where $k_3>0$ is obtained from \YI\ and $a:= \frac{\frac13-\frac1p+\frac1q}{\frac23-\frac1p+\frac1q}$ satisfies $a∈(\frac12,1)$.
In total, we have found $k_4>0$ such that for any $t_0>T$ 
\[
 \int_{t_0}^{t_0+2} \norm[\Lom p]{(ξ\cep)_t}^p + \frac12 \int_{t_0}^{t_0+2}\norm[\Lom p]{\Delta (ξ\cep)}^p \leq k_4
\]
holds for any $ε\in(0,ε_{t_0})$. Due to $ξ\equiv 1$ on $(t_0+1,t_0+2)$, we in particular have shown that for any $p>1$ there are $C>0$, $T:=T_\star+1>0$ such that for any $t_0>T$ we can find $ε_0>0$ such that for any $ε∈(0,ε_0)$
\[
 \int_{t_0+1}^{t_0+2}\norm[\Lom p]{\cept}+\int_{t_0+1}^{t_0+2} \norm[W^{2,p}(\Om)]{\cep} \leq C
\]
Using sufficiently high values of $p$, an application of the embedding result in \cite{amann_cptembeddings} refines this into the assertion on Hölder continuity.
\end{proof}

\subsection{Smoothness of $n$}

Inter alia depending on Lemma \ref{lem:nepbd} and \eqref{eq:csobest}, we can achieve the same for $\nep$: 
\begin{lemma}\label{lem:nc1pa}
 There are $\alpha\in(0,1)$, $C>0$, $T>0$ such that for any $t>T$ there is $ε_0>0$ with any $ε\in(0,ε_0)$ satisfying 
\[
 \nnorm{C^{1+γ,\frac{γ}2}(\Ombar\times[t,t+1])}{\nep}\leq C.
\]
\end{lemma}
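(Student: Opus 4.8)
The plan is to reproduce, for the first component, the argument already carried out for $c$ in Lemma~\ref{lem:cc1pa} and for $u$ in Lemma~\ref{lem:uepc1alpha}: combine the $\eps$-uniform $L^p$-bounds on $\nep$ with the Hölder (and Sobolev) bounds already known for $\cep$ and $\uep$, feed the resulting estimates into maximal Sobolev regularity for the Neumann heat semigroup applied to a time-cut-off version $ξ_{t_0}\nep$, and finally invoke the compact embedding of \cite{amann_cptembeddings}. The one genuinely new difficulty is that the first equation of \eqref{epsys} carries the divergence of the cross-diffusive flux and the transport term, both of which reintroduce a factor $\na\nep$ into the forcing.

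Concretely, fix $p\in(1,\infty)$ large (to be chosen at the end so that Amann's embedding yields a Hölder space with exponent $>0$) and an auxiliary exponent $q\in(1,p)$. Using Lemma~\ref{lem:nepbd} (applied with exponents $p$ and $2p$), Lemma~\ref{lem:cc1pa} -- in particular the $\eps$-uniform bound on $\cep$ in $L^{2p}((t,t+1),W^{2,2p}(\Om))$ contained in its proof, together with the Hölder bound on $\cep$, which controls $\na\cep$ in $L^\infty$ -- and Lemma~\ref{lem:uepc1alpha}, which controls $\uep$ in $L^\infty$, we fix $T>0$ such that for every $t_0>T$ there is $ε_0=ε_{t_0}>0$ for which all these bounds hold on $\Om\times(t_0,t_0+2)$, uniformly in $ε\in(0,ε_0)$. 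For such $t_0$ set $ξ:=ξ_{t_0}$ as in Definition~\ref{def:xi}. Expanding the divergence in \eqref{eq:nep} and using $\na\cdot\uep=0$, one checks that $ξ\nep$ solves, on $\Om\times(t_0,\infty)$,
\begin{align*}
 (ξ\nep)_t-\Delta(ξ\nep) =& -\frac{\chi}{(1+ε\nep)^2}\,\na(ξ\nep)\cdot\na\cep - \frac{\chi\,ξ\nep}{1+ε\nep}\,\Delta\cep - \uep\cdot\na(ξ\nep)\\
 &+ \kappa\,ξ\nep - \mu\,ξ\nep^2 + ξ'\nep,
\end{align*}
together with $\delny(ξ\nep)=0$ on $\dOm\times(t_0,\infty)$ and $(ξ\nep)(\cdot,t_0)=0$. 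Since $\na\cep$ and $\uep$ are bounded, and since $\|\nep\Delta\cep\|_{\Lom p}\le\|\nep\|_{\Lom{2p}}\|\Delta\cep\|_{\Lom{2p}}$ and $\|\nep^2\|_{\Lom p}=\|\nep\|_{\Lom{2p}}^2$, the $\Lom p$-norm of the right-hand side is, on $(t_0,t_0+2)$, bounded by $C\|\na(ξ\nep)\|_{\Lom p}$ plus terms whose $p$-th powers have $\eps$-uniformly bounded integral over $(t_0,t_0+2)$.

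Applying the maximal Sobolev regularity estimate for the Neumann heat semigroup (\cite{giga_sohr}) to this problem and then re-absorbing the gradient term by the \GNI{} $\|\na w\|_{\Lom p}\le C\|\Delta w\|_{\Lom p}^{a}\|w\|_{\Lom q}^{1-a}+C\|w\|_{\Lom q}$, valid with $a=\frac{\frac13-\frac1p+\frac1q}{\frac23-\frac1p+\frac1q}\in(\tfrac12,1)$ and with the $\Lom q$-factor controlled by Lemma~\ref{lem:nepbd}, together with \YI, we obtain $k>0$ (independent of $ε\in(0,ε_0)$) with
\[
 \int_{t_0}^{t_0+2}\!\!\|(ξ\nep)_t\|_{\Lom p}^p + \tfrac12\int_{t_0}^{t_0+2}\!\!\|\Delta(ξ\nep)\|_{\Lom p}^p \le k.
\]
By elliptic regularity for the Neumann Laplacian (as in the proof of Lemma~\ref{lem:cc1pa}) this upgrades $\Delta(ξ\nep)$ to $D^2(ξ\nep)$ in $L^p((t_0,t_0+2),\Lom p)$, and since $ξ\equiv1$ on $(t_0+1,t_0+2)$ we conclude that for every $t$ large enough there is $ε_0>0$ such that $\|\nept\|_{L^p((t,t+1),\Lom p)}+\|\nep\|_{L^p((t,t+1),W^{2,p}(\Om))}\le C$ for all $ε\in(0,ε_0)$. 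Choosing $p$ large and applying \cite[Thm.~1.1]{amann_cptembeddings} then gives the asserted Hölder bound.

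The main obstacle is precisely this re-absorption of $\na\nep$: the cross-diffusion and transport terms feed a factor $\na\nep$ back into the forcing, and, in contrast to the equations for $c$ and $u$, the $n$-equation has no coefficient that is small in $\eps$ to exploit; one is therefore forced to rely on the Gagliardo--Nirenberg interpolation with exponent $a<1$, which works only because Lemma~\ref{lem:nepbd} already provides an $\eps$-uniform $L^q$-bound on $\nep$. A secondary point to watch is that one needs $\Delta\cep$ (not merely $\na\cep$) controlled in a Lebesgue norm that is strong enough in time; this is supplied by the maximal Sobolev regularity bound established inside the proof of Lemma~\ref{lem:cc1pa}, used there with a sufficiently large exponent.
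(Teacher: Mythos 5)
Your proposal is correct and follows essentially the same route as the paper: a time cut-off $ξ_{t_0}\nep$, maximal Sobolev regularity for the Neumann heat problem, the $L^{2p}$-bounds on $\nep$ and $\Delta\cep$ together with $L^\infty$-control of $\na\cep$ and $\uep$ to estimate the forcing, Gagliardo--Nirenberg plus Young to re-absorb the $\na(ξ\nep)$ term, and finally Amann's embedding. The only (harmless) deviations are cosmetic: you write out the expanded equation for $ξ\nep$ instead of estimating the flux term directly, and you invoke Lemma \ref{lem:uepc1alpha} for the $L^\infty$-bound on $\uep$, which is in fact the cleaner reference.
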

\begin{proof}

Let $p∈(1,∞)$, $q∈(1,p)$. Using Lemma \ref{lem:nepbd}, we fix $T_0>0$, $C_n>0$ such that for any $ε>0$, for any $t>T_0$ 
\[
 \norm[\Lom {2p}]{\nep(•,t)}\leq C_n\qquad \norm[\Lom q]{\nep(•,t)}\leq C_n,\qquad \norm[\Lom p]{\nep(•,t)}\leq C_n. 
\]
Aided by Lemma \ref{lem:cc1pa} and Lemma \ref{lem:ueptozero} we then choose $C_c>0$, $C_u>0$ and $T>T_0$ such that for any $t>T$ there is $ε_t>0$ such that for any $ε\in(0,ε_t)$ 
\begin{align*}
 \norm[L^\infty(\Om\times(t,t+2))]{\na \cep}\leq C_c\qquad \int_t^{t+2} \norm[\Lom{2p}]{\Delta \cep}^{2p}\leq C_c\\
 \norm[\Liom]{\uep(•,s)}\leq C_u \qquad \mbox{for all } s\in (t,t+2).
\end{align*}

From the \GNI\ (combined with e.g. \cite[Thm. 3.4]{Simader_weakDirichletandNeumann} for estimating $\norm{D^2ϕ}$ by $\norm{\Delta ϕ}$) and \YI\ we see that for any $η>0$ we can find $C_η>0$ such that 
\begin{align}\label{eq:estimatenabla}
 \norm[\Lom p]{\na ϕ}^p \leq k_1 \norm[\Lom p]{\Delta ϕ}^{ap} \norm[\Lom q]{ϕ}^{(1-a)p} + k_1 \norm[\Lom q]{ϕ}^p\leq η\norm[\Lom p]{\Delta ϕ}^p + C_η\norm[\Lom q]{ϕ}^p \quad \mbox{for all }ϕ∈ W^{2,p}(\Om),
\end{align}
where $a=\frac{\frac13+\frac1q-\frac1p}{\frac23+\frac1q-\frac1p}$ and $k_1>0$ is the constant obtained from the \GNI. For any $t>T$ and any $ε\in(0,ε_t)$ we may estimate 
\begin{align*}
 \norm[\Lom p]{ξ\na•\kl{\frac{\nep}{1+ε\nep}\na\cep}}^p &= \norm[\Lom p]{ξ\frac{\na\nep•\na\cep}{(1+ε\nep)^2}+ξ\frac{\nep}{1+ε\nep}\Delta\cep}^p\\
&\leq 2^p\norm[\Lom p]{\na(ξ\nep)•\na\cep}^p+ 2^p\norm[\Lom{2p}]{\nep}^{p}\norm[\Lom {2p}]{\Delta \cep}^p \\
&\leq 2^p C_c^p \norm[\Lom p]{\na(ξ\nep)}^p + 2^p C_n^p \norm[\Lom{2p}]{\Delta \cep}^p
\end{align*}
on $(t,t+2)$. 
The maximal Sobolev estimates for the heat semigroup (\cite{giga_sohr}) once more assert that with some $k_2>0$
\begin{align*}
 \int_t^{t+2}& \norm[\Lom p]{ξ\nept}^p + \int_t^{t+2}  \norm[\Lom p]{\Delta ξ\nep}^p \\
 &\leq k_2 \int_t^{t+2}  \norm[\Lom p]{ξ\uep•\na\nep}^p +k_2\chi \int_t^{t+2}  \norm[\Lom p]{ξ\na •\left(\frac{\nep}{1+ε\nep}\na\cep\right)}^p\\
  & + k_2\kappa \int_t^{t+2}  \norm[\Lom p]{ξ\nep}^p + k_2μ \int_t^{t+2}  \norm[\Lom p]{ξ\nep^2}^p\\ 
\intertext{for any $t>0$ and any $ε>0$. Taking into account the estimates prepared above, for $t>T$ and $ε\in(0,ε_t)$ we thus obtain}
 \int_t^{t+2}& \norm[\Lom p]{ξ\nept}^p + \int_t^{t+2}  \norm[\Lom p]{\Delta (ξ\nep)}^p \\
 &\leq k_2(C_u^p+2^p C_c^p)\int_t^{t+2}  \norm[\Lom p]{\na(ξ\nep)}^p + 2^pC_n^p\int_t^{t+2}  \norm[\Lom{2p}]{\Delta\cep}^p + k_2(κ+μ)C_n(t+1-t)\\
 &\leq 2^pC_n^p\sqrt{C_c}+c(κ+μ)C_n(t+1-t) + \frac12\int_t^{t+2}  \norm[\Lom p]{\Delta (ξ\nep)} + C_η\norm[\Lom q]{\nep}^p,
\end{align*}
where we have used \eqref{eq:estimatenabla} with $η=\frac1{2k_2(C_u^p+2^pC_c^p)}$. Finally, since $ξ\equiv 1$ on $(t+1,t+2)$, we conclude
\[
 \int_{t+1}^{t+2} \norm[\Lom p]{\nept}^p + \frac12 \int_{t+1}^{t+2} \norm[\Lom p]{\Delta \nep}^p \leq C_1
\]
for any $t>T$ and any $ε\in(0,ε_t)$, where we have set $C_1= 2^pC_n^p\sqrt{C_c}+k_2(κ+μ)C_n(t+1-t)+C_η C_n$.
\end{proof}

\subsection{Improved smoothness}

Having found uniform Hölder bounds on $\nep$, $\cep$, $\uep$ for $\eps>0$ in the previous three lemmata, also $n$, $c$ and $u$ share this regularity and these bounds.

\begin{corollary}\label{cor:convc1pa}
There are $γ\in(0,1)$ and $T_0>0$ as well as a subsequence $\{\eps_{j_k}\}_{k\nat}\downto0$ of the sequence from Proposition \ref{prop:ex-weaksol} such that  for any $t>T_0$ 
 \[
  \nep\to n,\quad \cep\to c \quad \mbox{ in } \quad C^{1+γ,\frac{γ}2}(\Ombar\times[t,t+1]),\qquad \uep\to u \quad \mbox{ in } \quad C^{1+γ,\frac{γ}2}(\Ombar\times[t,t+1])
 \]
 as $\eps=\eps_j\downto 0$.
Moreover, there is $C>0$ such that 
\begin{equation}\label{eq:holderest}
 \nnorm{C^{1+γ,\frac{γ}2}(\Ombar\times[t,t+1])}{n}\leq C, \quad \nnorm{C^{1+γ,\frac{γ}2}(\Ombar\times[t,t+1])}{c}\leq C,\quad \nnorm{C^{1+γ,\frac{γ}2}(\Ombar\times[t,t+1])}{u}\leq C
\end{equation}
\end{corollary}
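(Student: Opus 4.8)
The plan is to combine the uniform bounds from Lemmas \ref{lem:uepc1alpha}, \ref{lem:cc1pa} and \ref{lem:nc1pa} with the compactness of the embedding of parabolic Hölder spaces and with uniqueness of limits. Let $\alpha_0\in(0,1)$ be the smallest of the three Hölder exponents provided by those lemmas, let $T_0>0$ be the largest of the three associated times, and fix $\gamma\in(0,\alpha_0)$. Splitting an interval of length $2$ into two of length $1$, the three lemmas furnish a constant $C_0>0$ such that for every $t>T_0$ there is $\eps_0(t)>0$ with
\[
 \nnorm{C^{1+\alpha_0,\frac{\alpha_0}2}(\Ombar\times[t,t+2])}{\nep}+\nnorm{C^{1+\alpha_0,\frac{\alpha_0}2}(\Ombar\times[t,t+2])}{\cep}+\nnorm{C^{1+\alpha_0,\frac{\alpha_0}2}(\Ombar\times[t,t+2])}{\uep}\le C_0
\]
for all $\eps\in(0,\eps_0(t))$.

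First I would extract a single subsequence by a diagonal procedure. For each $k\in\N_0$, the sets $\set{\nep;\,\eps\in(0,\eps_0(T_0+k))}$, $\set{\cep;\,\eps\in(0,\eps_0(T_0+k))}$ and $\set{\uep;\,\eps\in(0,\eps_0(T_0+k))}$ are bounded in $C^{1+\alpha_0,\frac{\alpha_0}2}(\Ombar\times[T_0+k,T_0+k+2])$ and hence, by the compact embedding $C^{1+\alpha_0,\frac{\alpha_0}2}(\Ombar\times[T_0+k,T_0+k+2])\embeddedinto\embeddedinto C^{1+\gamma,\frac{\gamma}2}(\Ombar\times[T_0+k,T_0+k+2])$, relatively compact in the latter space. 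Successively passing to subsequences of the sequence $(\eps_j)_j$ of Proposition \ref{prop:ex-weaksol} for $k=0,1,2,\dots$ and taking the diagonal sequence $(\eps_{j_k})_{k\nat}$, I obtain convergence of $\nep$, $\cep$, $\uep$ in $C^{1+\gamma,\frac{\gamma}2}(\Ombar\times[T_0+k,T_0+k+2])$ for every $k\in\N_0$. Since for any $t>T_0$ the interval $[t,t+1]$ is contained in $[T_0+k,T_0+k+2]$ with $k=\lfloor t-T_0\rfloor$, restriction yields the claimed convergence on $\Ombar\times[t,t+1]$ for every $t>T_0$.

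To identify the limits I would use \eqref{conv:n}, \eqref{conv:c}, \eqref{conv:u}, which give $\nep\to n$, $\cep\to c$, $\uep\to u$ almost everywhere in $\Om\times(0,\infty)$ along $(\eps_j)_j$, hence along $(\eps_{j_k})_k$; as convergence in $C^{1+\gamma,\frac{\gamma}2}$ implies pointwise convergence and a.e.\ limits are unique, the limits of $\nep$, $\cep$, $\uep$ must be $n$, $c$, $u$. Finally, for the bounds \eqref{eq:holderest}: for fixed $t>T_0$, every difference quotient entering $\nnorm{C^{1+\alpha_0,\frac{\alpha_0}2}(\Ombar\times[t,t+1])}{n}$ is a pointwise limit of the corresponding quotient for $\nep$, which is bounded by $C_0$; taking suprema over all admissible pairs of points yields $\nnorm{C^{1+\alpha_0,\frac{\alpha_0}2}(\Ombar\times[t,t+1])}{n}\le C_0$, and analogously for $c$ and $u$. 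Since $\Ombar\times[t,t+1]$ is bounded, $C^{1+\alpha_0,\frac{\alpha_0}2}$ embeds continuously into $C^{1+\gamma,\frac{\gamma}2}$, which gives \eqref{eq:holderest} with an appropriate $C>0$. This argument is essentially routine; the only point demanding a little care is the bookkeeping of the diagonal extraction, so as to end up with one subsequence --- still a subsequence of that from Proposition \ref{prop:ex-weaksol}, keeping the a.e.\ convergences available for the identification of limits --- that works simultaneously on all intervals $[t,t+1]$ with $t>T_0$.
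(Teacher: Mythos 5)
Your argument is correct and is exactly the (routine) content behind the paper's one-line proof, which simply declares the corollary an immediate consequence of Lemmas \ref{lem:uepc1alpha}, \ref{lem:cc1pa} and \ref{lem:nc1pa}: uniform $C^{1+\alpha,\frac{\alpha}{2}}$ bounds, compactness of the embedding into $C^{1+\gamma,\frac{\gamma}{2}}$ for $\gamma<\alpha$, a diagonal extraction over the intervals, identification of the limits through the a.e.\ convergences of Proposition \ref{prop:ex-weaksol}, and passage to the limit in the Hölder quotients for \eqref{eq:holderest}. No gaps; the bookkeeping of $\eps_0(t)$ and the patching of length-one intervals are handled correctly.
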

\begin{proof}
This is an immediate consequence of Lemma \ref{lem:nc1pa}, Lemma \ref{lem:cc1pa} and Lemma \ref{lem:uepc1alpha}.
\end{proof}

\begin{lemma}\label{lem:c2alpha}
 There are $T>0$, $γ∈(0,1)$ such that 
\[
 n,c\in C^{2+γ,1+\frac{γ}2}(\Ombar\times[T,\infty)), \quad u∈ C^{2+γ,1+\frac{γ}2}(\Ombar\times[T,\infty)).
\]
\end{lemma}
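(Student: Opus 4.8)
The plan is to bootstrap from the uniform $C^{1+\gamma,\gamma/2}$ bounds collected in Corollary \ref{cor:convc1pa} to full $C^{2+\gamma,1+\gamma/2}$ regularity by viewing each of the three equations as a linear parabolic (or Stokes) problem with Hölder-continuous coefficients and data, and then invoking Schauder theory. Throughout I would work with the limit functions $n,c,u$ directly (having them available in $C^{1+\gamma,\gamma/2}$ on every slab $\Ombar\times[t,t+1]$ with $t>T_0$ by Corollary \ref{cor:convc1pa}), rather than with the approximations, and I would fix some $T>T_0+1$ so that all coefficients are controlled on $[T-1,\infty)$.

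First I would treat the fluid equation. Since $u\in C^{1+\gamma,\gamma/2}$ and $n\in C^{\gamma,\gamma/2}$ (in particular $n\nabla\Phi\in C^{\gamma,\gamma/2}$ because $\Phi\in C^{1+\beta}$, choosing $\gamma\le\beta$) and $f\in C^{\beta,\beta/2}$ by \eqref{reg:f}, the term $(u\cdot\nabla)u$ lies in $C^{\gamma,\gamma/2}$, so the Stokes system $u_t=\Delta u+\nabla P+g$ with $g:=n\nabla\Phi+f-(u\cdot\nabla)u\in C^{\gamma,\gamma/2}$ and $\nabla\cdot u=0$, $u|_{\partial\Omega}=0$ has, by the Schauder theory for the (non-stationary) Stokes system (e.g.\ Solonnikov's estimates), a solution in $C^{2+\gamma,1+\gamma/2}$ on interior-in-time slabs; by uniqueness this is $u$. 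A standard cut-off in time (using the functions $\xi_{t_0}$ of Definition \ref{def:xi}) removes the dependence on the unknown initial data $u(t_0)$, and since the bounds are uniform in $t_0$ the regularity propagates to all of $\Ombar\times[T,\infty)$. Next, for $c$: the equation $c_t-\Delta c=-u\cdot\nabla c-nc$ has right-hand side in $C^{\gamma,\gamma/2}$ (product of $C^{1+\gamma,\gamma/2}$ and $C^{\gamma,\gamma/2}$ functions), homogeneous Neumann data, so linear parabolic Schauder estimates give $c\in C^{2+\gamma,1+\gamma/2}$. Finally, for $n$: rewrite the first equation as $n_t-\Delta n=-u\cdot\nabla n-\chi\nabla n\cdot\nabla c-\chi n\Delta c+\kappa n-\mu n^2$; now $\Delta c\in C^{\gamma,\gamma/2}$ by the step just completed, $\nabla c\in C^{1+\gamma,\gamma/2}\subset C^{\gamma,\gamma/2}$, and all the remaining factors are at least $C^{\gamma,\gamma/2}$, so the right-hand side is $C^{\gamma,\gamma/2}$ and another application of parabolic Schauder theory (again with the time cut-off) yields $n\in C^{2+\gamma,1+\gamma/2}$ on $\Ombar\times[T,\infty)$.

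The main obstacle is a matter of bookkeeping rather than of deep ideas: one must verify that the limit triple $(n,c,u)$ genuinely solves the \emph{pointwise} (not merely weak) equations on $\Ombar\times(T,\infty)$ so that classical Schauder estimates apply --- this follows once the first round of estimates upgrades the weak solution to a strong, then classical, solution, but the ordering of the three equations matters (one needs $c$'s second derivatives before one can close the estimate for $n$), and one must be careful that each application of Schauder theory is on a slab $[t_0,t_0+2]$ with a cut-off vanishing near $t_0$, exactly as in the proofs of Lemmas \ref{lem:uepc1alpha}--\ref{lem:nc1pa}, so that no information about initial data at $t_0$ is needed. A secondary point is choosing $\gamma$ small enough — at most the $\gamma$ from Corollary \ref{cor:convc1pa} and at most $\beta$ — so that $\Phi\in C^{1+\beta}$ and $f\in C^{\beta,\beta/2}$ really do feed Hölder data of the right exponent into all three equations; relabeling $\gamma$ downward once at the end keeps the statement clean.
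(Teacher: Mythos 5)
Your proposal is correct and follows essentially the same route as the paper: cut off in time, view each equation as a linear parabolic (Neumann heat for $c$, drift-diffusion for $n$ after expanding $\nabla\cdot(n\nabla c)$) or Stokes problem with $C^{\gamma,\frac{\gamma}2}$ data supplied by Corollary \ref{cor:convc1pa}, apply Schauder theory (LSU for $n,c$, Solonnikov for $u$), and identify the resulting smooth solutions with the weak limit via uniqueness. The only cosmetic difference is the ordering ($u$ first instead of last), which is immaterial since only the $n$-equation requires the previously obtained second-order regularity of $c$.
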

\begin{proof}
With $ξ$ as in Definition \ref{def:xi}, (and with $T$ as in the previous lemmata) the problem 
\begin{equation}
 \ctilde_t=\Delta \ctilde + g,\quad \ctilde(T)=0,\quad \delny\ctilde\amrand=0,
\end{equation}
for $g=-ξnc-ξu\na c+cξ'\in C^γ(\Ombar\times(T,\infty))$ is solved by $ξc$, the solution of this problem is unique according to \cite[III.5.1]{LSU}, and there is a solution belonging to $C^{2+γ,1+\frac{γ}2}(\Ombar\times[T+1,∞))$. We conclude that $c\in C^{2+γ,1+\frac{γ}2}(\Ombar\times[T+1,∞))$ 

Moreover, $ξn$ solves the following initial boundary value problem for $\ntilde$: 
\begin{equation}\label{eq:IBVPn}
 \ntilde_t=\Delta \ntilde-a•\na \ntilde +b,\quad \ntilde(T)=0,\quad \delny \ntilde\amrand =0
\end{equation}
where 
\[
 a=\chi \na c + u, \qquad b=-\chi n\Delta c ξ + κnξ-μn^2ξ +ξ'n
\]
satisfy $a,b\in C^{γ,\frac{γ}2}(\Ombar\times(T,\infty))$. \cite[IV.5.3]{LSU}  guarantees the existence of a solution $\ntilde\in C^{2+γ,1+\frac{γ}2}(\Ombar\times[T,\infty))$ for \eqref{eq:IBVPn} and the uniqueness assertion in \cite[III.5.1]{LSU} for weak solutions of \eqref{eq:IBVPn} shows that $ξn=\ntilde$. Due to $ξ\equiv 1$ on $[T+1,∞)$, we conclude $n\in C^{2+γ,1+\frac{γ}2}(\Ombar\times[T+1,\infty))$.

Finally, $ξu$ solves 
\[
 (ξu)_t = Δ(ξu)+ \calP (ξ'u  - ξ(u•∇)u + ξn∇Φ + ξf), \quad \na\cdot(ξu)=0, \quad (ξu)(T-1)=0, \quad (ξu)\bdry=0
\]
where $\calP (ξ'u  - ξ(u•∇)u + ξn∇Φ + ξf)$ is Hölder continuous according to Corollary \ref{cor:convc1pa}, \eqref{reg:f} and \cite[Lemma A.2]{XinruJoh}. 
The regularity assertion of \cite[Thm. 1.1]{Solonnikov}, if combined with the uniqueness result in \cite[Thm. V.1.5.1]{sohr_book}, thus yields the desired smoothness of $ξu$ on $[T-1,∞)$ and hence of $u$ on $[T,∞)$.
\end{proof}

\subsection{Convergence}\label{sec:convergence}

\begin{lemma}\label{lem:convergence}
 The solution $(n,c,u)$ of \eqref{eq:system} constructed in Proposition \ref{prop:ex-weaksol} satisfies 
 \[
  n(•,t)\to \frac{κ}{μ},\qquad c(•,t)\to 0,\qquad u(•,t)\to 0 \qquad\quad  \mbox{ as } t\to \infty 
 \]
 in $C^1(\Ombar)$. 
\end{lemma}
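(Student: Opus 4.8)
The plan is to argue by contradiction, combining the uniform Hölder bounds of Corollary~\ref{cor:convc1pa} with a compactness argument and the integral decay estimates collected earlier in this section.

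Suppose the assertion fails for one of the three components, say for $n$: then there would be $\delta>0$ and times $t_j\to\infty$ with $\norm[C^1(\Ombar)]{n(\cdot,t_j)-\frac{\kappa}{\mu}}\geq\delta$ for all $j\nat$. By \eqref{eq:holderest} the shifted functions $n_j\colon(x,s)\mapsto n(x,t_j+s)$ would be bounded in $C^{1+\gamma,\frac{\gamma}2}(\Ombar\times[0,1])$ uniformly in $j$, so that via the compact embedding $C^{1+\gamma,\frac{\gamma}2}(\Ombar\times[0,1])\embeddedinto\embeddedinto C^{1,0}(\Ombar\times[0,1])$ one could pass to a subsequence along which $n_j\to w$ in $C^{1,0}(\Ombar\times[0,1])$; in particular $n(\cdot,t_j)\to w(\cdot,0)$ in $C^1(\Ombar)$. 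To identify $w$, I would use that \eqref{eq:iinminusgwqleq} (from Lemma~\ref{lem:estimatesfromddtG}) together with \eqref{conv:nl2} gives $\intninf\io\left(n-\frac{\kappa}{\mu}\right)^2<\infty$, hence $\int_{t_j}^{t_j+1}\io\left(n-\frac{\kappa}{\mu}\right)^2\to 0$, forcing $\int_0^1\io\left(w-\frac{\kappa}{\mu}\right)^2=0$ and thus $w\equiv\frac{\kappa}{\mu}$ on $\Ombar\times[0,1]$. This contradicts the choice of $(t_j)$.

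For $u$ I would run the same scheme, replacing \eqref{eq:iinminusgwqleq} and \eqref{conv:nl2} by Lemma~\ref{lem:ul6} with $q=2$ together with \eqref{conv:u}, which yield $\int_t^{t+1}\io|u|^2\to 0$ as $t\to\infty$, so that any $C^{1,0}$-limit of shifts of $u$ vanishes identically. For $c$ I would instead invoke Corollary~\ref{cor:ctozero}: since $\norm[\Liom]{c(\cdot,t)}\to 0$, any subsequential $C^{1,0}$-limit $v$ of the shifts of $c$ satisfies $v(\cdot,s)=0$ for every $s\in[0,1]$, whence also $\na c(\cdot,t_j)\to 0$ uniformly, contradicting $\norm[C^1(\Ombar)]{c(\cdot,t_j)}\geq\delta$. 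In this way all three convergences would be established in the norm of $C^1(\Ombar)$.

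The main obstacle is the identification of the subsequential limits: the compactness argument by itself only delivers $C^{1,0}$-convergence along subsequences, whereas the decay information at our disposal is of the comparatively weak type $L^1$-in-time of $L^2$-in-space norms (respectively $L^\infty$-in-space decay for $c$). The key point is that uniform $C^{1,0}$-convergence of the time-slices is exactly what upgrades this weak decay to the claimed $C^1$-convergence, after which the contradiction scheme promotes the subsequential statement to convergence along every sequence $t\to\infty$.
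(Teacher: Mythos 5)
Your argument is correct, and for $n$ and $c$ it is essentially the paper's own proof: shifted time-slices, the uniform bound \eqref{eq:holderest} plus compactness in $C^{1,0}$ (respectively $C^{1+γ}(\Ombar)\embeddedinto\embeddedinto C^1(\Ombar)$ in the paper), identification of the limit via $\intninf\io(n-\frac{κ}{μ})^2<\infty$ (from \eqref{eq:iinminusgwqleq} and \eqref{conv:nl2}) for $n$ and via Corollary \ref{cor:ctozero} for $c$. The only genuine divergence is the $u$-component: the paper does not identify the subsequential limit through an integral decay of $u$ itself, but instead assumes $u_\infty\not\equiv 0$, extracts an $L^p$-lower bound ($p>4$) along the subsequence, and plays Lemma \ref{lem:ueptozero} (smallness of $\norm[\Lom p]{\uep(\cdot,t)}$ for large $t$, small $\eps$) against the convergence $\uep\to u$ from Corollary \ref{cor:convc1pa} to reach a contradiction at the approximation level. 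Your route stays entirely at the limit level: Lemma \ref{lem:ul6} with $q=2$ together with \eqref{conv:u} indeed gives $\int_t^{t+1}\io|u|^2\to 0$ as $t\to\infty$ (this decay for $u$ is in fact already derived inside the proof of Lemma \ref{lem:ul6}), which forces any $C^{1,0}$-limit of the shifts to vanish, exactly parallel to your treatment of $n$. Both arguments are sound; yours has the advantage of a uniform scheme for all three components and of not returning to the $\eps$-approximation, while the paper's version avoids invoking the shifted-function compactness for $u$ and works directly with the time-slices.
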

\begin{proof}
Assume $c(t)\not\to 0$ in $C^1(\Ombar)$ as $t\to\infty$. Then there are $η_0>0$ and $t_j\to\infty$ such that $\norm[C^1(\Ombar)]{c(t_j)}>η_0$ for all $j\nat$. Due to \eqref{eq:holderest} and by the compact embedding $C^{1+γ}(\Ombar)\embeddedinto\embeddedinto C^1(\Ombar)$ there is some function $c_∞∈C^1(\Ombar)$ such that $c(t_{j_k})\to c_∞$ along a subsequence $(t_{j_k})_{k\nat}$ of $(t_j)$ and therefore $c(t_{j_k})\to c_∞$ in $\Liom$ as $t\to∞$, which shows that, according to Corollary  \ref{cor:ctozero}, $c_∞=0$. But $\norm[C^1(\Ombar)]{c(t_{j_k})}\to 0$ contradicts $\norm[C^1(\Ombar)]{c(t_j)}>η_0$ for all $j\nat$.

We proceed similarly for $u$: Assuming $u\not\to 0$ in $C^1(\Ombar)$, we find $η_0>0$, $t_j\to\infty$ and $u_∞∈C^1(\Ombar)$ such that $\norm[C^1(\Ombar)]{u(t_{j_k})}>η_0$, $u(t_j)\to u_∞$ in $C^1(\Ombar)$.
If $u_∞\not\equiv 0$, for some arbitrary $p>4$ there are $η_1>0$ and a subsequence $(t_{j_k})_{k\nat}$ of $(t_j)_{j\nat}$ such that $\norm[\Lom p]{u(t_{j_k})}>η_1$.
With $T_0$ as in Corollary \ref{cor:convc1pa}, we may use Lemma \ref{lem:ueptozero} to obtain $T>T_0$ such that for any $t>T$ there is $ε_t>0$ such that for all $ε∈(0,ε_t)$ we have $\norm[\Lom p]{\uep(•,t)}<\frac{η_1}2$, and pick $j$ such that $t_j>T$ and $ε_0$ such that for all $ε∈(0,ε_0)$ $\norm[\Lom p]{\uep(•,t_j)}<\frac{η_1}2$. We observe that thereby $\norm[\Lom p]{u(•,t_j)-\uep(t_j)}>η_1-\frac{η_1}2>0$, which contradicts Corollary \ref{cor:convc1pa}.

As to the convergence of $n$ we define 
\[
 n_j(x,s):=n(x,j+s),\quad x\in\Ombar,\  s\in[0,1]
\]
and claim that $n_j\to \frac{κ}{μ}$ in $C^{1,0}(\Ombar\times[0,1])$ as $j\to\infty$. Were this not the case, we could find $η_0>0$ and a sequence $(k_j)_{j\nat}\subℕ$,  $k_j\to\infty$, such that $\nnorm{C^{1,0}(\Ombar\times[0,1])}{n_{k_j}-\frac{κ}{μ}}>η_0$ for all $j∈ℕ$. Due to the bound on $n$ in \eqref{eq:holderest}, $n_{k_{j_l}}\to n_∞$ in $C^{1,0}(\Ombar\times[0,1])$ with some $n_∞∈C^{1,0}(\Ombar\times[0,1])$. Because $\int_0^1\io \left(n_j(x,s)-\frac{κ}{μ}\right)^2 dxds\to 0$ as $j\to\infty$, according to Lemma \ref{lem:estimatesfromddtG}, $n_∞\equiv \frac{κ}{μ}$, contradicting either $n_{k_{j_l}}\to n_∞$ or $\nnorm{C^{1,0}(\Ombar\times[0,1])}{n_{k_j}-\frac{κ}{μ}}>η_0$. 
Hence $n_j\to\frac{κ}{μ}$ in $C^1(\Ombar\times[0,1])$ as $j\to\infty$, and in particular $\sup_{s\in[0,1]} \norm[C^1(\Ombar)]{n_j(•,s)-\frac{κ}{μ}}\to 0$ as $j\to\infty$ implies that $n(\cdot,t)\to \frac{κ}{μ}$ as $t\to \infty$.
\end{proof}

\subsection{Proof of Theorem \ref{thm:evsmooth}}
\begin{proof}[Proof of Theorem \ref{thm:evsmooth}]
The theorem immediately results from Lemma \ref{lem:c2alpha} and Lemma \ref{lem:convergence}. 
\end{proof}

{\footnotesize

\def\cprime{$'$}

}

\end{document}